\tikzset{->-/.style={decoration={
  markings,
  mark=at position .5 with {\arrow{>}}},postaction={decorate}}}
\newtheorem{theorem}{Theorem}[section]
\newtheorem{corollary}[theorem]{Corollary}
\newtheorem{lemma}[theorem]{Lemma}
\newtheorem{proposition}[theorem]{Proposition}
\newtheorem{definition-proposition}[theorem]{Definition-Proposition}
\newtheorem{question}[theorem]{Question}
\theoremstyle{definition}
\newtheorem{definition}[theorem]{Definition}
\newtheorem{example}[theorem]{Example}
\theoremstyle{remark}
\newtheorem{remark}[theorem]{Remark}
\newcommand{\rme}{\mathrm{e}}
\newcommand{\rmg}{\mathrm{g}}
\newcommand{\rmw}{\mathrm{w}}
\newcommand{\rmH}{\mathrm{H}}
\newcommand{\rmZ}{\mathrm{Z}}
\newcommand{\calI}{\mathcal{I}}
\newcommand{\calM}{\mathcal{M}}
\newcommand{\calP}{\mathcal{P}}
\newcommand{\calZ}{\mathcal{Z}}
\newcommand{\fkm}{\mathfrak{m}}
\newcommand{\fkp}{\mathfrak{p}}
\newcommand{\fkS}{\mathfrak{S}}
\newcommand{\ZZ}{\mathbb{Z}}
\newcommand{\QQ}{\mathbb{Q}}
\newcommand{\RR}{\mathbb{R}}
\newcommand{\sfA}{\mathsf{A}}
\newcommand{\ttI}{\mathtt{I}}
\newcommand{\sfM}{\mathsf{M}}
\newcommand{\sfN}{\mathsf{N}}
\newcommand{\sfP}{\mathsf{P}}
\newcommand{\sfT}{\mathsf{T}}
\newcommand{\height}{\operatorname{ht}}
\newcommand{\Spec}{\operatorname{Spec}}
\newcommand{\depth}{\operatorname{depth}}
\newcommand{\rank}{\operatorname{rank}}
\newcommand{\Ext}{\operatorname{Ext}}
\newcommand{\Hom}{\operatorname{Hom}}
\newcommand{\GL}{\operatorname{GL}}
\newcommand{\SL}{\operatorname{SL}}
\newcommand{\Cl}{\operatorname{Cl}}
\newcommand{\End}{\operatorname{End}}
\newcommand{\gldim}{\mathrm{gl.dim}}
\newcommand{\add}{\mathsf{add}}
\newcommand{\CM}{\mathsf{CM}}
\newcommand{\mc}{\mathsf{mod}}
\newcommand{\refl}{\mathsf{ref}}
\begin{document}

\title[Semi-steady NCCRs via regular dimer models]{Semi-steady non-commutative crepant resolutions \\ via regular dimer models}
\author[Y. Nakajima]{Yusuke Nakajima} 

\address[Y. Nakajima]{Kavli Institute for the Physics and Mathematics of the Universe (WPI), UTIAS, The University of Tokyo, Kashiwa, Chiba 277-8583, Japan} 
\email{yusuke.nakajima@ipmu.jp}


\subjclass[2010]{Primary 13C14, 05B45; Secondary 14E15, 16S38.} 
\keywords{Non-commutative crepant resolutions, Dimer models, Regular tilings, Toric singularities} 

\maketitle

\begin{abstract} 
A consistent dimer model gives a non-commutative crepant resolution (= NCCR) of a $3$-dimensional Gorenstein toric singularity. 
In particular, it is known that a consistent dimer model gives a class of NCCRs called steady if and only if it is homotopy equivalent to a regular hexagonal dimer model. 
Inspired by this result, we detect another nice property on NCCRs that characterizes square dimer models. 
We call such NCCRs semi-steady NCCRs, and study their properties. 
\end{abstract}


\section{\bf Introduction} 

\subsection{Overview and Motivations} 

The notion of non-commutative crepant resolution (= NCCR) was introduced by Van den Bergh \cite{VdB2} (see also \cite{VdB1}). 
It is an algebra derived equivalent to crepant resolutions for some singularities, and it gives another perspective on Bondal-Orlov conjecture \cite{BO} and Bridgeland's theorem \cite{Bri}. 
For example, an NCCR of a quotient singularity is given by the skew group algebra (see e.g., \cite{VdB2,Iya,IW2}), and 
if a given quotient singularity is $d$-dimensional Gorenstein with $d\le 3$, the skew group algebra is derived equivalent to crepant resolutions of such a singularity \cite{BKR,KV}. 
NCCRs are also related with Cohen-Macaulay representation theory. 
Indeed, cluster tilting modules (or subcategories) give a framework to study modules giving NCCRs (see e.g., \cite{DH,Iya,IR,IW2,IW4}), and the present paper follows this viewpoint. 
Here, we recall the definition of NCCRs \cite{VdB2}. (For further details concerning terminology used in this introduction, see later sections.) 

\begin{definition}
\label{NCCR_def}
Let $R$ be a Cohen-Macaulay normal domain, and $M$ be a non-zero reflexive $R$-module. Let $\Lambda\coloneqq\End_R(M)$. 
We say that $\Lambda$ is a \emph{non-commutative crepant resolution} (= \emph{NCCR}) of $R$ 
or $M$ \emph{gives an NCCR} of $R$ if $\Lambda$ is a \emph{non-singular $R$-order}, that is, 
$\gldim\,\Lambda_\fkp=\dim\,R_\fkp$ for all $\fkp\in\Spec R$ and $\Lambda$ is a maximal Cohen-Macaulay $R$-module. 
\end{definition}

We refer to \cite[Proposition~2.17]{IW2} for several conditions that are equivalent to $\Lambda$ is a non-singular $R$-order. 
Also, the existence of NCCRs and their properties have been studied in several papers e.g., \cite{Bro,BLVdB,BIKR,Dao,DFI,HN,IU2,IW1,IW2,IW4,Leu2,SpVdB,Wem} and references therein. 

\medskip

One of the interesting families of NCCRs is given by dimer models, and we will mainly discuss such NCCRs in this paper. 
A \emph{dimer model} is a finite bipartite graph on the real two-torus. 
We define the quiver with potential $(Q, W_Q)$ as the dual of a dimer model, and we then define the Jacobian algebra $\calP(Q, W_Q)$ which is 
the path algebra with certain relations arising from the potential $W_Q$. 
If a dimer model satisfies the consistency condition (see Definition~\ref{def_consistent}), then the center of $\calP(Q, W_Q)$ is a $3$-dimensional Gorenstein toric singularity and $\calP(Q, W_Q)$ gives an NCCR of such a singularity.  
Conversely, for every $3$-dimensional Gorenstein toric singularity $R$, there exists a consistent dimer model giving $R$ as the center of $\calP(Q, W_Q)$. 
Thus, every $3$-dimensional Gorenstein toric singularity admits an NCCR. For more details, see e.g., \cite{Bro,IU2,Boc4} and Section~\ref{NCCR_via_dimer}. 

Although an NCCR does not necessarily exist for a given singularity in general, the existence of an NCCR shows that 
it has at worst log-terminal singularities \cite{DITW} (see also \cite{StVdB,DITV}). 
Furthermore, if we impose several conditions on NCCRs, then we have more concrete singularities. 
Indeed, Iyama and the author introduced the notion of steady NCCRs and splitting NCCRs in \cite{IN}, and studied singularities admitting steady splitting NCCRs. Here, we note the definition of steady and splitting NCCRs. 

\begin{definition}
\label{def_steady}
Let $R$ be a Cohen-Macaulay normal domain, and $M$ be a non-zero reflexive $R$-module. 
\begin{enumerate}[\rm (1)]
\item We say that $M$ is \emph{steady} if $M$ is a \emph{generator} (that is, $R\in\add_RM$) and $\End_R(M)\in\add_RM$ holds. 
We say that an NCCR $\End_R(M)$ is a \emph{steady NCCR} if $M$ is steady.
\item We say that $M$ is \emph{splitting} if $M$ is a finite direct sum of rank one reflexive modules. 
We say that an NCCR $\End_R(M)$ is a \emph{splitting NCCR} if $M$ is splitting. 
\end{enumerate}
\end{definition}

Using these notions, we see that the existence of a steady splitting NCCR characterizes quotient singularities associated with finite abelian groups (see \cite[Theorem~3.1]{IN}). 
Restricting this result to NCCRs arising from consistent dimer models, we have the following theorem. We note that NCCRs arising from consistent dimer models are always splitting, thus we may not mention the splittingness in this theorem. 

\begin{theorem}[{see \cite[Corollary~1.7]{IN}}]
\label{dimer} 
Let $\Gamma$ be a consistent dimer model, $R$ be the $3$-dimensional complete local Gorenstein toric singularity associated with $\Gamma$, 
and $k$ be an algebraically closed field of characteristic zero. Then, the following conditions are equivalent.
\begin{enumerate}[\rm (1)]
\item $R$ is a quotient singularity associated with a finite abelian group $G\subset\SL(3, k)$ $($i.e., $R=S^G$ where $S=k[[x_1,x_2, x_3]]$$)$. In particular, the cone defining $R$ is simplicial and hence $R$ is $\QQ$-factorial. 
\item $\Gamma$ is homotopy equivalent to a regular hexagonal dimer model $($i.e., each face of a dimer model is a regular hexagon$)$. 
\item $\Gamma$ gives a steady NCCR of $R$.
\end{enumerate}
\end{theorem}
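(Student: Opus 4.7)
The plan is to obtain Theorem~\ref{dimer} as a dimer-model specialization of Theorem~\ref{main_IN}, together with a purely combinatorial identification between ``triangular PM polygon'' and ``homotopy equivalent to a regular hexagonal dimer model''.

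First, I would record what the dimer construction always produces: for a consistent $\Gamma$, the Jacobian algebra is $\calP(Q,W_Q)\simeq\End_R(M)$ where $M=\bigoplus_v M_v$ ranges over the vertices $v$ of $Q$ and each $M_v$ is a rank-one reflexive $R$-module. Thus $M$ is automatically splitting in the sense of Definition~\ref{def_splitting}, and (3) is equivalent to saying that $R$ admits a steady splitting NCCR (which happens to be the one coming from $\Gamma$).

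Second, I would apply Theorem~\ref{main_IN} with $d=3$ to obtain (1)$\Leftrightarrow$(3). Two minor points need to be checked. Since $R$ is a Gorenstein toric singularity, the group $G$ produced by Theorem~\ref{main_IN}(1) must actually lie inside $\SL(3,k)$. And by Theorem~\ref{main_IN}(2), when (1) holds the basic splitting NCCR is unique, so it necessarily coincides with the one supplied by $\Gamma$; steadiness therefore passes from abstract existence to $\Gamma$ itself.

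Third, for (1)$\Leftrightarrow$(2), I would use the toric-dimer dictionary. Let $\Delta_\Gamma$ be the PM polygon of $\Gamma$; up to $\GL(2,\ZZ)$ this is the lattice polygon cutting out the Gorenstein cone defining $R$. Then $R$ is a quotient singularity by an abelian $G\subset\SL(3,k)$ exactly when the cone is simplicial, i.e., when $\Delta_\Gamma$ is a triangle. So (1)$\Leftrightarrow$(2) reduces to the purely combinatorial statement that a consistent dimer model has triangular PM polygon if and only if it is homotopy equivalent to a regular hexagonal dimer model. The direction ($\Leftarrow$) is a direct computation of the PM polygon of the regular hexagonal tiling. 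For ($\Rightarrow$), I would combine the fact that consistent dimer models with the same PM polygon are related by moves (spider moves and join/split moves) that preserve the homotopy class, together with an explicit regular hexagonal representative for each triangular lattice polygon (naturally extracted from the McKay quiver of the corresponding abelian $G\subset\SL(3,k)$).

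The main obstacle is precisely the ($\Rightarrow$) direction of this last equivalence: showing that any consistent $\Gamma$ whose PM polygon is a triangle can be deformed, through consistency-preserving moves, to a regular hexagonal model. This is the genuinely dimer-theoretic input going beyond Theorem~\ref{main_IN} and is where the geometric rigidity of regular tilings enters the proof.
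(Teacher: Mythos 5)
The theorem is stated in the paper as a citation to \cite[Corollary~1.7]{IN}, so there is no proof here to compare against line by line; I will evaluate your outline on its own merits, in light of the machinery this paper does set up.

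Your derivation of $(1)\Leftrightarrow(3)$ from Theorem~\ref{main_IN} is correct and clean: a consistent dimer model always produces a splitting generator by Theorem~\ref{NCCR1}, so ``$\Gamma$ gives a steady NCCR'' is literally the existence of a steady splitting NCCR; Gorensteinness forces $G\subset\SL(3,k)$; and the uniqueness statement Theorem~\ref{main_IN}(2) makes the steadiness transfer back to the specific module coming from $\Gamma$. The reduction of $(1)\Leftrightarrow$``triangular PM polygon'' is also standard.

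The genuine gap is in the direction ``triangular PM polygon $\Rightarrow$ $\Gamma$ is homotopy equivalent to a regular hexagonal model.'' You propose to realize this by connecting $\Gamma$ to an explicit hexagonal representative via spider moves and join/split moves, and you invoke that these moves ``preserve the homotopy class.'' They do not: a spider move changes the edge counts of the faces adjacent to the mutated quadrilateral, and join/split moves change node valencies, so both change the combinatorial type of the cell decomposition. More decisively, ``consistent dimer models with the same PM polygon lie in one homotopy class'' is simply false in general: the paper's own Example~\ref{ex_semisteady} exhibits a toric singularity whose PM polygon is a parallelogram and which admits both a square dimer model and a non-regular consistent dimer model. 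So the property you would be trying to propagate through moves does not survive them, and the statement you are trying to reduce to is exactly the rigidity that needs an independent argument in the triangular case.

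The way to close the gap is not through moves but through the quiver. Once you know (via Theorem~\ref{main_IN} and Theorem~\ref{NCCR1}) that $\calP(Q_\Gamma,W_{Q_\Gamma})\cong\End_R(S)\cong S*G$, the ordinary quiver of $\calP(Q_\Gamma,W_{Q_\Gamma})$ is $Q_\Gamma$ itself, because $J(W_{Q_\Gamma})\subseteq\fkm_{Q_\Gamma}^2$ (no bivalent nodes), so $\calP/\rad^2$ recovers $Q_\Gamma$; and the quiver of $S*G$ is the McKay quiver of $G$ acting on $k^3$. For abelian $G\subset\SL(3,k)$ the McKay quiver has exactly $3$ outgoing and $3$ incoming arrows at every vertex. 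Dualizing, every face of $\Gamma$ is a hexagon; the Euler characteristic of the torus then forces $|\Gamma_0|=2|\Gamma_2|$, so the total degree $2|\Gamma_1|=6|\Gamma_2|$ is spread over $2|\Gamma_2|$ nodes, giving average degree $3$, and since no node has degree $<3$, all nodes are trivalent. A bipartite trivalent cell decomposition of the torus with only hexagonal faces lifts to the regular hexagonal tiling of the plane, which is precisely the paper's definition of being homotopy equivalent to a regular hexagonal dimer model. This quiver-degree/Euler-characteristic argument is the piece of ``geometric rigidity'' you correctly sensed was needed, but the mechanism is entirely different from the spider-move route you proposed.
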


In this way, we could characterize a regular hexagonal dimer model, which is a typical dimer model as we will see below, using the nice class of NCCRs. Thus, we then ask the following. 

\begin{question}
\label{que_dimer}
Can we characterize other dimer models by using NCCRs ? 
\end{question}

Since a dimer model is a bipartite graph on the real two-torus, the universal cover of it gives rise to the one on the Euclidean plane, 
hence dimer models are closely related with tilings of the Euclidean plane. 
A \emph{tiling} (or \emph{tessellation}) is a covering of the Euclidean plane using one or more polygons without overlaps and gaps. 
A \emph{regular tiling} is a tiling that is made up of congruent regular polygons and edge-to-edge. 
Here, edge-to-edge means any two polygons intersect precisely along a common edge, 
or have precisely one common point which is a vertex of a polygon, or have no common points. 
It is well-known that regular polygons giving regular tilings are only equilateral triangles, squares, or regular hexagons (see e.g., \cite{GS}). 

\begin{definition}
We say that a dimer model $\Gamma$ is \emph{regular} if the underlying cell decomposition of the universal cover of $\Gamma$ is 
homotopy equivalent to a regular tiling. 
\end{definition}

Since we can not realize a regular tiling consisting of equilateral triangles as a dimer model, 
a dimer model is regular if and only if it is homotopy equivalent to a square dimer model or a regular hexagonal dimer model. 
Thus, we next consider nice properties on NCCRs that characterize square dimer models. 
In this paper, in order to give a partial answer to Question~\ref{que_dimer}, we will introduce the notion of semi-steady NCCRs, which is weaker than the steadiness. 
We then study basic properties of semi-steady NCCRs, 
and as a result we show that the semi-steadiness actually characterizes NCCRs arising from square dimer models (see Theorem~\ref{main_intro} below). 
We also mention that we have several examples of semi-steady NCCRs even if a given singularity is not toric. 
(see Example~\ref{ex_Ansing} and \ref{cAn-sing}). 

\subsection{Semi-steady non-commutative crepant resolutions}
\label{subsec_intro}
In this subsection, we introduce the notion of semi-steady NCCRs. 
Let $R$ be a Cohen-Macaulay normal domain. Since non-singular $R$-orders are closed under Morita equivalence 
(see e.g., \cite[Lemma~2.13]{IW2}), 
we assume that a module $M=\bigoplus^n_{i=1}M_i$ giving an NCCR is \emph{basic}, that is, $M_i$'s are mutually non-isomorphic. 
In addition, since we will discuss memberships of additive closures, we assume that $R$ is complete local by \cite[Proposition~2.26]{IW2}. 
In particular, the Krull-Schmidt condition holds in our situation. 

We now recall that if $M=\bigoplus_{i=0}^nM_i$ is a steady module, then it implies $\rme_i\End_R(M)\cong\Hom_R(M_i,M)\in\add_RM$ for any $i$, where $\rme_i$ is the idempotent corresponding to the summand $M_i$. 
On the other hand, the semi-steadiness allows $\Hom_R(M_i,M)$ to be in $\add_RM^*$ as follows. 
Here, $M^*$ denotes the $R$-dual of $M$.  

\begin{definition}
\label{def_semisteady}
Let $M=\bigoplus_{i=0}^nM_i$ be the indecomposable decomposition of a reflexive $R$-module $M$. 
We say that $M=\bigoplus_{i=0}^nM_i$ is \emph{semi-steady} if $M$ is a generator and 
$\Hom_R(M_i,M)\in\add_RM$ or $\add_RM^*$ for all $i=0,\cdots,n$. 
In addition, we say that an NCCR $\End_R(M)$ is a \emph{semi-steady NCCR} if $M$ is semi-steady. 
\end{definition}

We note that the condition ``$M$ is a generator" can be obtained from the condition that $\Hom_R(M_i,M)\in\add_RM$ or $\add_RM^*$ for all $i=0,\cdots,n$ 
in many cases (see Lemma~\ref{prop_semisteady}). 
Also, we may change the later condition to $\Hom_R(M,M_i)\in\add_RM$ or $\add_RM^*$ for all $i=0,\cdots,n$ when $R$ is a normal domain (see Lemma~\ref{dual_def}). 
Further, we can easily see that a steady module is a semi-steady module. 
In particular, the next lemma follows from the definition and \cite[Lemma~2.5(b)]{IN}. 

\begin{lemma}
\label{semisteady_iff_steady}
Let $M$ be a reflexive $R$-module. Then, $M$ is steady if and only if $M$ is semi-steady and $\add_RM=\add_RM^*$ holds. 
\end{lemma}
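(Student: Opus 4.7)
The plan is to prove the equivalence by unpacking the definitions and exploiting the idempotent decomposition $\End_R(M)\cong\bigoplus_{i=0}^n\Hom_R(M_i,M)$.

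For the forward direction, assume $M$ is steady, so $M$ is a generator and $\End_R(M)\in\add_RM$. For each $i$, the module $\Hom_R(M_i,M)=\rme_i\End_R(M)$ is a direct summand of $\End_R(M)$, hence belongs to $\add_R\End_R(M)\subseteq\add_RM$. This gives the first condition of semi-steadiness, and the generator condition is inherited directly. The remaining identity $\add_RM=\add_RM^*$ is exactly the content of the already-cited \cite[Lemma~2.5(b)]{IN}, so I would just quote it.

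For the reverse direction, assume $M=\bigoplus_{i=0}^n M_i$ is semi-steady and that $\add_RM=\add_RM^*$. Under this equality, the disjunction in Definition~\ref{def_semisteady} collapses: whether $\Hom_R(M_i,M)$ lies in $\add_RM$ or in $\add_RM^*$, it lies in $\add_RM$ in either case. Hence $\Hom_R(M_i,M)\in\add_RM$ for every $i$. Using the idempotent decomposition
\[
\End_R(M)\;\cong\;\bigoplus_{i=0}^n\Hom_R(M_i,M),
\]
and the fact that $\add_RM$ is closed under finite direct sums and summands, we conclude $\End_R(M)\in\add_RM$. Together with the generator hypothesis, this is precisely the steadiness of $M$.

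The argument is essentially definition-chasing, with no serious technical obstacle. The only subtlety worth isolating is that the disjunctive clause in the definition of semi-steadiness becomes a single condition exactly when $\add_RM=\add_RM^*$, which is both what makes the reverse implication work and what the forward implication supplies via \cite[Lemma~2.5(b)]{IN}.
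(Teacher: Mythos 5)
Your proof is correct and takes essentially the same route the paper intends: the paper's one-line justification (``follows from the definition and \cite[Lemma~2.5(b)]{IN}'') is exactly what you have unpacked, with the forward direction combining the idempotent decomposition of $\End_R(M)$ with the cited lemma, and the reverse direction collapsing the disjunction in the definition of semi-steadiness once $\add_RM=\add_RM^*$ is assumed.
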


Considering semi-steady NCCRs, we can characterize square dimer models as follows. 
(For more details regarding terminologies, see Section~\ref{NCCR_via_dimer}.) 

\begin{theorem}[{see Theorem~\ref{main_thm} for more precise version}]
\label{main_intro}
Let $\Gamma$ be a consistent dimer model. 
Suppose that $R$ is the $3$-dimensional complete local Gorenstein toric singularity associated with $\Gamma$. 
Then, the following conditions are equivalent.
\begin{enumerate}[\rm (1)]
\item $\Gamma$ is homotopy equivalent to a square dimer model (i.e., each face of the dimer model is a square).
\item $\Gamma$ is isoradial and gives a semi-steady NCCR of $R$ that is not steady. 
\end{enumerate}
When this is the case, we also see that the toric singularity $R$ corresponding to such a dimer model 
is the one associated with a parallelogram. 
\end{theorem}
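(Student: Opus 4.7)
The plan is to handle the biconditional by separating the ``semi-steady'' and ``not steady'' conditions, using Theorem~\ref{dimer} to identify failure of steadiness with non-hexagonality, and using the divisor class description of the NCCR modules to analyze semi-steadiness. Throughout, I will use the fact that the module $M=\oplus_i M_i$ giving the NCCR from a consistent dimer model is a direct sum of rank-one reflexive modules $M_i=R(D_i)$ for explicit divisor classes $D_i$ encoded by perfect matchings. With this identification, $\Hom_R(M_i,M_j)$ is reflexively isomorphic to $R(D_j-D_i)$, so semi-steadiness of $M$ is equivalent to requiring that every difference $[D_j-D_i]$ in the divisor class group equal $\pm[D_k]$ for some $k$; steadiness, by Lemma~\ref{semisteady_iff_steady}, is the stronger condition without the $\pm$.

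For (1) $\Rightarrow$ (2), I would first verify isoradiality of the square tiling directly (inscribe each square in a common-radius circle centered at the face centroid), and obtain failure of steadiness from Theorem~\ref{dimer}. The semi-steadiness verification then becomes a direct check on the divisor class group of the toric singularity associated with a square dimer model: the list of classes $[D_i]$ is small and symmetric enough under the natural involution (swapping the roles of black and white vertices of $\Gamma$) that the differences $[D_j-D_i]$ can be enumerated and shown to lie in $\{\pm[D_k]\}$, with the asymmetry $\add_R M\neq \add_R M^\ast$ coming from the fact that the involution does not preserve $\add_R M$.

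For (2) $\Rightarrow$ (1), I would combine the isoradial rhombus embedding of $\Gamma$ with the divisor-class condition just derived. Passing to the perfect matching polygon (the Newton polygon defining $R$) via the standard correspondence between zigzag paths of $\Gamma$ and edges of this polygon, the semi-steady condition translates into a $\pm$-symmetry property on the set of lattice points of the PM polygon (modulo an overall shift). The strategy is then to show that, together with consistency and isoradiality, this $\pm$-symmetry forces the PM polygon to have exactly four edges and to be centrally symmetric, i.e., to be a parallelogram; the non-steady hypothesis rules out the hexagonal case by Theorem~\ref{dimer}. Once the PM polygon is identified as a parallelogram, one recovers that $\Gamma$ has exactly four zigzag path classes in two parallel pairs, and the isoradial embedding built from these classes produces a square tiling up to homotopy equivalence. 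This step simultaneously gives the square structure in (1) and the final assertion about the toric singularity.

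The main obstacle is the structural step in (2) $\Rightarrow$ (1): showing that the $\pm$-symmetry on the PM polygon, combined with the consistency and isoradiality constraints, leaves only the parallelogram as a possibility. Ruling out intermediate shapes such as non-regular hexagons or pentagons requires a careful analysis of how the $D_i$'s sit inside the divisor class group, and how the zigzag paths constrain this embedding; I expect this combinatorial case analysis to consume the bulk of the technical work behind Theorem~\ref{main_thm}.
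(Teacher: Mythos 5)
Your high-level plan captures the correct \emph{inputs} (the $T_{ij}$-description of the NCCR, the zigzag/PM-polygon dictionary, Theorem~\ref{dimer} for the non-steady part) but it is missing the mechanism the paper uses to make (2)~$\Rightarrow$~(1) work, and one of your claimed reductions in that direction is false. I will flag both issues.

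First, a small but real inaccuracy in the translation of semi-steadiness into divisor classes. Writing $M_i=R(D_i)$, the condition $\Hom_R(M_i,M)\in\add_RM$ or $\add_RM^*$ is not the same as ``each $[D_j-D_i]$ equals $\pm[D_k]$ for some $k$''. For a \emph{fixed} $i$, the whole family $\{[D_j-D_i]\}_j$ must lie in $\{[D_k]\}_k$, \emph{or} the whole family must lie in $\{-[D_k]\}_k$; you are not allowed to mix signs across different $j$ at the same $i$. The pointwise condition you state is strictly weaker. In the particular examples you intend to enumerate for (1)~$\Rightarrow$~(2) the discrepancy probably doesn't bite, but the phrasing would not survive as a definition.

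The serious gap is in (2)~$\Rightarrow$~(1). Your endgame is: derive a ``$\pm$-symmetry on lattice points of the PM polygon'', force it to be a parallelogram, and then assert that ``the isoradial embedding built from these classes produces a square tiling up to homotopy.'' That last step is not correct: a parallelogram PM polygon does \emph{not} determine the dimer model, and there exist consistent (even isoradial) non-square dimer models whose PM polygon is a parallelogram --- the paper's Example~\ref{ex_semisteady} exhibits exactly such a model and observes it is not semi-steady. So one cannot pass from ``PM polygon is a parallelogram'' to ``$\Gamma$ is homotopy equivalent to a square dimer model'' without further input about $\Gamma$ itself. What the paper actually uses is a different and essential observation: semi-steadiness forces every $e_i\calP(Q,W_Q)$ to be isomorphic to $M$ or $M^*$, and because paths of a given length at vertex $i$ correspond under this isomorphism to paths of the same length at any other vertex $j$, all small cycles (faces of $Q$, i.e. valences of nodes of $\Gamma$) have a common length $m$. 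Combined with the properly-ordered condition and the fact (from Theorem~\ref{class_semi_steady} and Lemma~\ref{cl_toric}) that there are exactly four zigzag slopes, this forces $m\in\{3,4\}$; the case $m=3$ is then ruled out by a concrete zigzag-path analysis using Proposition~\ref{slope_isoradial}, and $m=4$ gives both the parallelogram and the square structure of $\Gamma$ simultaneously. Nothing in your proposal reconstructs this constant-$m$ argument, and without it your ``$\pm$-symmetry of the PM polygon'' does not know anything about the local combinatorics of $\Gamma$. You would need to either reproduce this step or find a genuine substitute for it; a purely PM-polygon-level argument cannot distinguish $\Gamma$ from the non-square examples.

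For (1)~$\Rightarrow$~(2), your enumeration-of-classes plan is a legitimate alternative in principle, but the paper's route is cleaner and worth knowing: it partitions $Q_0$ into two parity classes (the ``gray'' and ``white'' faces of the square tiling), shows via translated paths that all vertices in one class give the same module $M$, and shows via the shift-and-dualize argument (Lemma~\ref{key_lem}, using the identification $\calP(Q,W_Q)\cong\calP(Q^{\rm op},W_{Q^{\rm op}})$) that vertices in the other class give $M^*$. This avoids having to compute the class group at all for this direction.
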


Thus, we immediately have the following corollary by combining Theorem~\ref{dimer} and \ref{main_intro}. 

\begin{corollary}[{see Corollary~\ref{main_cor}}] 
\label{main_cor_intro}
With the notation as above, the following conditions are equivalent.
\begin{enumerate}[\rm (1)]
\item The dimer model $\Gamma$ is isoradial and gives a semi-steady NCCR of $R$.
\item The dimer model $\Gamma$ is homotopy equivalent to a regular dimer model. 
\end{enumerate}
\end{corollary}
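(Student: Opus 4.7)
The plan is to derive the corollary directly from Theorem~\ref{dimer} and Theorem~\ref{main_intro} via a dichotomy on whether the semi-steady NCCR in question is actually steady. As a preliminary observation, since a dimer model is by definition a bipartite graph, its underlying graph contains no odd cycles, and hence the triangular regular tiling cannot be realized as a dimer model. Therefore among the three regular tilings in Figure~\ref{regular_tiling}, only the square tiling and the regular hexagonal tiling can occur as dimer models, so $\Gamma$ is homotopy equivalent to a regular dimer model if and only if it is homotopy equivalent either to a square dimer model or to a regular hexagonal dimer model.

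For the direction (2)$\Rightarrow$(1), I would treat the two shapes separately. If $\Gamma$ is homotopy equivalent to a square dimer model, then Theorem~\ref{main_intro} directly supplies both isoradiality and a semi-steady NCCR of $R$. If instead $\Gamma$ is homotopy equivalent to a regular hexagonal dimer model, then the implication (2)$\Rightarrow$(3) of Theorem~\ref{dimer} yields a steady NCCR of $R$, which is in particular semi-steady by Lemma~\ref{semisteady_iff_steady}; furthermore the standard embedding of the regular hexagonal tiling is manifestly isoradial (the three edges meeting at each vertex have equal length and all hexagonal faces are inscribed in circles of a common radius), so $\Gamma$ is isoradial as well.

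For the direction (1)$\Rightarrow$(2), I would dichotomize on whether the given semi-steady NCCR $\End_R(M)$ is also steady. If it is steady, the implication (3)$\Rightarrow$(2) of Theorem~\ref{dimer} shows that $\Gamma$ is homotopy equivalent to a regular hexagonal dimer model. If it is not steady, then the isoradiality hypothesis together with Theorem~\ref{main_intro} forces $\Gamma$ to be homotopy equivalent to a square dimer model. Either way, $\Gamma$ is homotopy equivalent to a regular dimer model.

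Since both implications reduce to invoking the two cited theorems after the bipartiteness observation, there is no substantive obstacle in this argument; it is essentially a bookkeeping step combining the hexagonal case (Theorem~\ref{dimer}) with the square case (Theorem~\ref{main_intro}). The only minor point worth noting explicitly is that the isoradiality clause in (1) does not shrink the hexagonal class of Theorem~\ref{dimer}, because any regular hexagonal dimer model is automatically isoradial.
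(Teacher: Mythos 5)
Your argument is correct and takes essentially the same approach as the paper: the paper's only stated justification for this corollary is the single remark that it follows ``by combining Theorem~\ref{main_thm} with Theorem~\ref{dimer},'' and your write-up simply supplies the bookkeeping that remark implicitly asks for. Your dichotomy (steady vs.\ not steady for (1)$\Rightarrow$(2), square vs.\ hexagonal for (2)$\Rightarrow$(1)), together with the observations that the triangular tiling cannot arise from a bipartite graph and that regular hexagonal dimer models are automatically isoradial, is exactly the intended reduction; both of these facts are stated elsewhere in the paper (the bipartiteness remark after Figure~\ref{regular_tiling} and the sentence ``regular dimer models are isoradial'' before Figure~\ref{zz_4a}).
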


The content of this paper is the following. 
First, we observe some basic properties of semi-steady modules in Section~\ref{sec_semisteady}. 
The remarkable thing is that a singularity admitting a semi-steady NCCR has the typical class group (see Theorem~\ref{class_semi_steady}). 
Since the main purpose of this paper is to investigate NCCRs arising from dimer models, 
we review some basic results regarding toric singularities and dimer models in Section~\ref{NCCR_via_dimer}. 
In particular, we explain that how to construct splitting NCCRs using consistent dimer models. 
After that, we prove Theorem~\ref{main_intro} in Section~\ref{sec_semisteady_dimer}. 
In Section~\ref{subsec_ex}, we give several examples of semi-steady NCCRs arising from regular dimer models. 


\subsection*{Notations and Conventions} 
Throughout this paper, we will assume that $k$ is an algebraically closed field of characteristic zero, 
and a commutative noetherian ring $R$ is complete local, thus the Krull-Schmidt condition holds (see the beginning of subsection~\ref{subsec_intro}). 

In this paper, all modules are left modules, and we denote 
by $\mc R$ the category of finitely generated $R$-modules, 
by $\add_RM$ the full subcategory consisting of direct summands of finite direct sums of copies of $M\in \mc R$. 
We suppose that $M=\bigoplus^n_{i=0}M_i$ always denotes the indecomposable decomposition of an $R$-module $M$. 
When we consider a composition of morphism, $fg$ means we firstly apply $f$ then $g$. 
With this convention, $\Hom_R(M, X)$ is an $\End_R(M)$-module and $\Hom_R(X, M)$ is an $\End_R(M)^{\rm op}$-module. 
Similarly, when we consider a quiver, a path $ab$ means $a$ then $b$. 

In addition, we denote by $\Cl(R)$ the class group of $R$. 
When we consider a divisorial ideal (rank one reflexive $R$-module) $I$ as an element of $\Cl(R)$, we denote it by $[I]$.


\section{\bf Basic properties of semi-steady NCCRs} 
\label{sec_semisteady}

In this section, we present some basic properties of semi-steady modules. 

We start this section with preparing some notions used in this paper. 
We denote the $R$-dual functor by $(-)^*:=\Hom_R(-, R) : \mc R\rightarrow \mc R$. 
We say that $M\in \mc R$ is \emph{reflexive} if the natural morphism $M\rightarrow M^{**}$ is an isomorphism. 
We denote by $\refl R$ the full subcategory of reflexive $R$-modules. 
For $M\in\mc R$, we define the depth of $M$ as 
\[
\depth_RM:=\mathrm{inf} \{ i\ge0 \mid \Ext^i_R(R/\fkm, M)\neq 0\}, 
\]
where $\fkm$ is the maximal ideal of $R$. 
We say that $M$ is a \emph{maximal Cohen-Macaulay} (= \emph{MCM}) $R$-module if $\depth_RM={\rm dim}R$ or $M=0$. 
Furthermore, we say that $R$ is a \emph{Cohen-Macaulay ring} (= \emph{CM ring}) if $R$ is an MCM $R$-module. 
We denote by $\CM R$ the full subcategory of MCM $R$-modules. 

\medskip

Before moving to basic properties of semi-steady modules, we note some comments concerning the definition of semi-steady modules. 

\begin{lemma}
\label{prop_semisteady}
Let $R$ be a normal domain. 
Suppose that $M=\bigoplus_{i=0}^nM_i\in\refl R$ satisfies $\Hom_R(M_i,M)\in\add_RM$ or $\Hom_R(M_i,M)\in\add_RM^*$ for all $i$. 
Then, $M$ is a generator if one of the following conditions is satisfied. 
  \begin{enumerate}[$\bullet$]
   \item $R$ contains a field of characteristic zero, 
   \item $M$ has a rank one reflexive module as a direct summand. 
  \end{enumerate}
In particular, if $M$ is splitting, then $M$ is a generator. 
\end{lemma}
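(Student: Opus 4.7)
My plan is to reduce the assertion $R\in\add_R M$ to the sub-claim that some index $i$ satisfies: $R$ is a direct summand of $\End_R(M_i)$. Granting this, since $M$ is basic, $\End_R(M_i)=\Hom_R(M_i,M_i)$ is a direct summand of $\Hom_R(M_i,M)$, hence so is $R$. By hypothesis, $\Hom_R(M_i,M)\in\add_R M$ or $\Hom_R(M_i,M)\in\add_R M^*$. In the first case $R\in\add_R M$ immediately. In the second, I apply the $R$-dual $(-)^*$, which restricts to an auto-equivalence of $\refl R$ since $R$ is a normal domain; from $R\in\add_R M^*$ one then obtains $R=R^*\in\add_R M^{**}=\add_R M$, using that $M$ is reflexive. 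Thus in either case $M$ is a generator.

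Next I would verify the sub-claim in each of the two settings. If some $M_i$ is rank-one reflexive, then $M_i$ is isomorphic to a divisorial ideal of $R$; as $R$ is a normal domain, $\End_R(M_i)=R$, and this $i$ works. In the characteristic-zero setting, I would use a trace argument. For any $i$ the rank $r_i:=\rank_R M_i$ is a positive integer, hence invertible in $R$ because $\QQ\subseteq R$. Reflexivity of $M_i$ over the normal domain $R$ ensures that $M_i$ is locally free of constant rank $r_i$ on the complement of a closed set of codimension at least two, so the classical trace defined on this locus extends uniquely (using that both $\End_R(M_i)$ and $R$ satisfy Serre's condition $S_2$) to a map $\Tr:\End_R(M_i)\to R$. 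Its composition with the scalar inclusion $R\hookrightarrow\End_R(M_i)$ equals multiplication by $r_i$, so $\tfrac{1}{r_i}\Tr$ splits the inclusion and exhibits $R$ as a direct summand of $\End_R(M_i)$, as desired.

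Finally, the very last sentence follows at once: if $M$ is splitting then every $M_i$ is rank one reflexive, so the rank-one case applies.

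The one mildly technical step, and the main obstacle, is the extension of the trace across the non-free locus in the characteristic-zero argument; this is routine given the reflexivity of $\End_R(M_i)$, but is where care is needed.
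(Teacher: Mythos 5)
Your proof is correct, and its overall architecture (reduce to exhibiting $R$ as a direct summand of $\Hom_R(M_i,M)$ for some $i$, then push it into $\add_R M$ or $\add_R M^*$ via the hypothesis, finally dualize in the second case using reflexivity of $M$) matches the paper's. The rank-one case is handled identically on both sides: for a rank-one reflexive $I\in\add_R M$, $\End_R(I)\cong R$ does the job. The genuinely different step is the characteristic-zero case: the paper cites \cite[5.6]{Aus3} for the fact that $R\in\add_R\End_R(M)$ and then locates the relevant $i$ by indecomposability of $R$, whereas you re-prove the key fact from scratch by constructing a trace map $\Tr:\End_R(M_i)\to R$ directly. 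Your construction (define $\Tr$ on the locally free locus, whose complement has codimension at least two, and extend uniquely using that $\Hom_R(\End_R(M_i),R)$ is reflexive over the normal domain $R$, then use $\Tr(\mathrm{id})=r_i\in R^\times$ because $\QQ\subseteq R$) is sound; it is in effect an unpacking of the Auslander citation, and it even yields the slightly stronger statement that $R$ splits off $\End_R(M_i)$ for \emph{every} $i$, not merely some $i$. One small cosmetic point: you invoke that $M$ is basic to see $\End_R(M_i)$ as a summand of $\Hom_R(M_i,M)$, but this only uses that $M_i$ is a summand of $M$; the basic hypothesis is not needed there.
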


\begin{proof}
First, if $R$ contains a field of characteristic zero, we have that $R\in\add_R\End_R(M)$ by \cite[5.6]{Aus3}. 
Thus, we have that $R\in\add_R\Hom_R(M_i,M)$ for some $i$, and hence $R\in\add_RM$ or $\add_RM^*$. 
If $R\in\add_RM^*$, then we have that $R=R^*\in\add_RM^{**}=\add_RM$. 

Next, we suppose that $I$ is a rank one reflexive $R$-module such that $I\in\add_R M$. 
Then, we have that $R\cong\Hom_R(I, I)\in\add_RM$ or $\add_RM^*$. 
\end{proof}

The following lemma is basic, and useful to investigate semi-steady modules. 

\begin{lemma}
\label{basic_lem_ref}
Let $R$ be a normal domain. For any $M, N\in\refl R$, we have that 
\[
\Hom_R(M,N)^*\cong\Hom_R(N,M).
\]
\end{lemma}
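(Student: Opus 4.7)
The plan is to reduce the identity to tensor--hom adjunction combined with the principle that over a normal domain, a morphism between reflexive modules is an isomorphism if and only if it is so at every prime of height at most one.

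First, I would rewrite the left-hand side. Using the reflexivity of $N$ and tensor--hom adjunction,
\[
\Hom_R(M,N) \cong \Hom_R(M,N^{**}) \cong \Hom_R(M\otimes_R N^{*},R) = (M\otimes_R N^{*})^{*},
\]
so that $\Hom_R(M,N)^{*} \cong (M\otimes_R N^{*})^{**}$.

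Next, I would analyze the natural evaluation map
\[
\eta\colon M\otimes_R N^{*}\longrightarrow \Hom_R(N,M),\qquad m\otimes\phi\longmapsto\bigl(n\mapsto \phi(n)\,m\bigr).
\]
For any prime $\fkp$ of height at most one, $R_\fkp$ is a field or a DVR, so the reflexive $R_\fkp$-module $N_\fkp$ is free of finite rank; in that case $\eta_\fkp$ becomes the classical isomorphism $M_\fkp\otimes_{R_\fkp}N_\fkp^{*}\cong \Hom_{R_\fkp}(N_\fkp,M_\fkp)$. Thus $\Ker\eta$ and $\Coker\eta$ are supported in codimension $\ge 2$. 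The target $\Hom_R(N,M)\cong (N\otimes_R M^{*})^{*}$ is a dual, and duals of finitely generated modules are always reflexive (the natural map $X^{*}\to X^{***}$ is an isomorphism); the source $(M\otimes_R N^{*})^{**}$ is reflexive by construction. Applying $(-)^{**}$ to $\eta$ yields a morphism $\eta^{**}\colon (M\otimes_R N^{*})^{**}\to \Hom_R(N,M)$ between reflexive $R$-modules that is an isomorphism in codimension $\le 1$; by the principle quoted above, it is an isomorphism. Concatenating the identifications gives
\[
\Hom_R(M,N)^{*}\cong (M\otimes_R N^{*})^{**}\cong \Hom_R(N,M),
\]
as desired.

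The main obstacle is ensuring the codimension-one extension argument is watertight, namely verifying that both source and target of $\eta^{**}$ are genuinely reflexive and invoking the standard fact that a map between reflexive $R$-modules over a normal domain is an isomorphism whenever its localizations at all height-one primes are. Neither point is difficult: reflexivity of the source is automatic from double-dualization, reflexivity of the target follows because duals over any Noetherian ring are reflexive, and the extension principle is a direct consequence of the characterization of reflexive modules over a normal domain as the $S_{2}$-modules.
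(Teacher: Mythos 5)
Your argument is essentially the paper's: both begin with the natural evaluation map ($\varphi\colon M^*\otimes_R N\to\Hom_R(M,N)$ in the paper; your $\eta$ is the same map with the roles of $M$ and $N$ swapped), both observe that it is an isomorphism after localizing at height-one primes, and both conclude by the codimension-one extension principle for reflexive modules over a normal domain (\cite[Lemma~5.11]{LW}); the paper applies $(-)^*$ once to $\varphi$, whereas you apply $(-)^{**}$ to $\eta$ and use tensor--hom adjunction to identify the source with $\Hom_R(M,N)^*$, which is a cosmetic repackaging. One correction to a side remark: the natural map $X^*\to X^{***}$ is \emph{not} an isomorphism for arbitrary finitely generated modules over arbitrary Noetherian rings (already for $X=k$ over a non-Gorenstein Artinian local ring the $k$-dimensions of $X^*$ and $X^{***}$ differ); what you are actually using is that over a \emph{normal domain} the dual of any finitely generated module is reflexive, which does hold and is all you need.
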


\begin{proof}
Consider a natural morphism $\varphi:M^*\otimes_RN\rightarrow\Hom_R(M,N)$ ($\varphi(f\otimes y)(x)=f(x)y$ for any $x\in M, y\in N$), 
and this induces 
\[
\varphi^*:\Hom_R(M,N)^*\rightarrow (M^*\otimes_RN)^*\cong\Hom_R(N,M^{**})\cong\Hom_R(N,M). 
\]
We easily see that $\varphi_\fkp^*$ is an isomorphism for any $\fkp\in\Spec R$ with $\height\fkp=1$, 
and hence $\varphi^*$ is also an isomorphism since both are reflexive (see e.g., \cite[Lemma~5.11]{LW}). 
\end{proof}

Next, we discuss the latter condition of the definition of semi-steady modules. 

\begin{lemma}
\label{dual_def} 
Suppose that $R$ is a normal domain. For a reflexive $R$-module $M=\bigoplus_{i=0}^nM_i$, we have that 
$\Hom_R(M_i,M)\in\add_RM$ or $\add_RM^*$ holds if and only if 
$\Hom_R(M,M_i)\in\add_RM$ or $\add_RM^*$ holds.  
\end{lemma}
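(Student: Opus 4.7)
The plan is to derive this directly from Lemma~\ref{basic_lem_ref}. The key observation is that, over a normal domain, the $R$-dual functor $(-)^*$ restricts to a contravariant self-equivalence on $\refl R$ satisfying $(-)^{**} \cong \id$. Consequently, for any reflexive module $N$ one has $\add_R N^{**} = \add_R N$, and for reflexive $X$ the condition $X \in \add_R N$ is equivalent to $X^* \in \add_R N^*$. Since both $\Hom_R(M_i, M)$ and $\Hom_R(M, M_i)$ are reflexive (being $\Hom$'s with reflexive target over a normal domain), dualization freely moves these modules between $\add_R M$ and $\add_R M^*$.

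With this in hand, I would argue the ``only if'' direction as follows. Fix $i$, and suppose first that $\Hom_R(M_i, M) \in \add_R M$. Applying $(-)^*$ and invoking Lemma~\ref{basic_lem_ref}, we obtain
\[
\Hom_R(M, M_i) \cong \Hom_R(M_i, M)^* \in \add_R M^*.
\]
If instead $\Hom_R(M_i, M) \in \add_R M^*$, the same argument yields $\Hom_R(M, M_i) \in \add_R M^{**} = \add_R M$. Either way, $\Hom_R(M, M_i) \in \add_R M$ or $\add_R M^*$, as required.

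The ``if'' direction is entirely symmetric: starting from $\Hom_R(M, M_i) \in \add_R M$ or $\add_R M^*$, dualize via $\Hom_R(M, M_i)^* \cong \Hom_R(M_i, M)$ (Lemma~\ref{basic_lem_ref} again, after identifying $M$ with $M^{**}$) to recover the original hypothesis for each $i$. There is essentially no obstacle here beyond bookkeeping; the only care needed is to confirm reflexivity of the $\Hom$-modules involved, so that the isomorphism $(-)^{**} \cong \id$ may be freely applied at each step.
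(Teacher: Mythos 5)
Your proof is correct and takes essentially the same route as the paper: in both cases one dualizes, applies Lemma~\ref{basic_lem_ref} to identify $\Hom_R(M_i,M)^*\cong\Hom_R(M,M_i)$, and observes that $(-)^*$ swaps $\add_RM$ and $\add_RM^*$ (using $M^{**}\cong M$). The extra remarks on reflexivity of the $\Hom$-modules are harmless bookkeeping the paper leaves implicit.
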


\begin{proof}
If $\Hom_R(M_i,M)\in\add_RM$ (resp. $\add_RM^*$), then we have $\Hom_R(M_i,M)^*\in\add_RM^*$ (resp. $\add_RM$). 
By Lemma~\ref{basic_lem_ref}, we have $\Hom_R(M_i,M)^*\cong\Hom_R(M,M_i)\in\add_RM^*$ (resp. $\add_RM$). 
By the duality, the converse also holds. 
\end{proof}

In what follows, we show basic properties of semi-steady modules (see also \cite[Lemma~2.5]{IN}). 
We remark that the converse of Lemma~\ref{basic_lem}(a) is not true (see Example~\ref{ex3}). 

\begin{lemma}
\label{basic_lem}
Suppose that $R$ is a normal domain and $M=\bigoplus_{i=0}^nM_i\in\refl R$ is semi-steady. 
Then, we have the following. 
\begin{enumerate}[\rm(a)]
\item We have that $\add_R\End_R(M)=\add_R(M\oplus M^*)$.
\item $M^*$ is also a semi-steady $R$-module. 
\end{enumerate}
\end{lemma}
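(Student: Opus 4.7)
My strategy for part (a) is to establish the two inclusions $\add_R\End_R(M)\subseteq\add_R(M\oplus M^*)$ and $\add_R(M\oplus M^*)\subseteq\add_R\End_R(M)$ separately. The first inclusion is immediate from the decomposition
\[
\End_R(M)=\bigoplus_{i=0}^{n}\Hom_R(M_i,M),
\]
since the semi-steady hypothesis places each summand in $\add_RM\cup\add_RM^*$. For the reverse inclusion, I will use that $M$ is a generator: since $M$ is basic and $R$ is indecomposable, Krull--Schmidt forces $R$ to be isomorphic to one of the $M_i$, say $M_0$. Then $\Hom_R(M_0,M)\cong M$ sits as a direct summand of $\End_R(M)$, so $M\in\add_R\End_R(M)$. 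Reading the same algebra as $\End_R(M)=\bigoplus_{i=0}^{n}\Hom_R(M,M_i)$, the summand $\Hom_R(M,M_0)\cong M^*$ gives $M^*\in\add_R\End_R(M)$ as well.

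For part (b), I need to check that $M^*=\bigoplus_{i=0}^{n}M_i^*$ satisfies the three defining conditions of semi-steadiness. Reflexivity is automatic, and $M^*$ is a generator because $R=R^*\in\add_RM^*$ (using $R\in\add_RM$). The essential point is to verify that
\[
\Hom_R(M_i^*,M^*)\in\add_RM^*\cup\add_R(M^*)^*=\add_RM^*\cup\add_RM
\]
for every $i$. My plan is to establish the natural isomorphism $\Hom_R(M_i^*,M^*)\cong\Hom_R(M,M_i)$ induced by the dualization $f\mapsto f^*$, and then invoke Lemma~\ref{dual_def}, which guarantees that $\Hom_R(M,M_i)\in\add_RM\cup\add_RM^*$ for all $i$.

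The main obstacle is justifying this displayed isomorphism; everything else is bookkeeping. I plan to argue as in the proof of Lemma~\ref{basic_lem_ref}: both $\Hom_R(M_i^*,M^*)$ and $\Hom_R(M,M_i)$ are reflexive $R$-modules because $R$ is a normal domain and the arguments are reflexive, and the dualization map is plainly an isomorphism after localization at any height-one prime, where $M$ and $M_i$ become free and the map reduces to matrix transposition. Since a morphism between reflexive modules over a normal domain is an isomorphism once it is one in codimension $\leq 1$, the dualization map is a global isomorphism, and combining it with Lemma~\ref{dual_def} completes the proof of (b).
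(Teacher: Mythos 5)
Your proof is correct and follows essentially the same route as the paper. For (a) the paper also proves the two inclusions separately ($\End_R(M)\in\add_R(M\oplus M^*)$ from the definition, and $M,M^*\in\add_R\End_R(M)$ from $M$ being a generator, using exactly the $\Hom_R(M_0,M)\cong M$, $\Hom_R(M,M_0)\cong M^*$ observation you spell out); for (b) the paper likewise establishes $\Hom_R(M_i^*,M^*)\cong\Hom_R(M,M_i)$ (via the chain $\Hom_R(M_i^*,M^*)\cong\Hom_R(M,M_i)^{**}\cong\Hom_R(M,M_i)\cong\Hom_R(M_i,M)^*$, using Lemma~\ref{basic_lem_ref}) and then applies the semi-steady hypothesis, whereas you verify the same isomorphism directly as a codimension-one check and then route through Lemma~\ref{dual_def}, which is the same observation in packaged form.
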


\begin{proof}
(a) Since $M$ is a generator, we have that $M, M^*\in\add_R\End_R(M)$. 
In addition, we have that $\End_R(M)\in\add_R(M\oplus M^*)$ by the definition of semi-steady module. 

(b) Clearly, $M^*$ is a generator. By Lemma~\ref{basic_lem_ref}, we have an isomorphism 
\[
\Hom_R(M_i^*,M^*)\cong\Hom_R(M,M_i)^{**}\cong\Hom_R(M,M_i)\cong\Hom_R(M_i,M)^*.
\]
Therefore, $\Hom_R(M_i^*,M^*)\in\add_RM$ or $\add_RM^*$ for all $i$. 
\end{proof}

Further, we discuss the number of direct summands in semi-steady modules. 

\begin{lemma}
\label{number_lem}
Suppose that $R$ is a normal domain and $M=\bigoplus_{i=0}^nM_i$ is a basic semi-steady module that is not steady. 
We define the sets of subscripts $\ttI\coloneqq\{i \mid \Hom_R(M_i,M)\in \add_RM\}$ and $\ttI^*\coloneqq\{i \mid \Hom_R(M_i,M)\in \add_RM^*\}$. 
Then, we have the following. 
\begin{enumerate}[\rm(a)]
\item Let $\calI$ (resp. $\calI^*$) be the number of elements in $\ttI$ (resp. $\ttI^*$). 
Then, we have that $\calI=\calI^*$. 
\item $n+1$ $($$=$ the number of direct summands in $M$$)$ is an even number.  
\end{enumerate}
\end{lemma}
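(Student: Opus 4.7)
The plan is to derive (b) as an immediate corollary of (a): once we know $|\ttI|=|\ttI^*|$ and that $\{0,\ldots,n\}=\ttI\sqcup\ttI^*$, we get $n+1=2|\ttI|$, so the main content is (a), together with showing that these two index sets partition all of $\{0,\ldots,n\}$.

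First I would handle disjointness: if $i\in\ttI\cap\ttI^*$, then $M\cong\Hom_R(M_i,M)\cong M^*$, so $\add_RM=\add_RM^*$, and Lemma~\ref{semisteady_iff_steady} would force $M$ to be steady, contradicting the hypothesis. Next, every $i$ should lie in $\ttI\cup\ttI^*$: the semi-steady hypothesis gives $\Hom_R(M_i,M)\in\add_RM$ or $\add_RM^*$. The rank computation $\rank_R\Hom_R(M_i,M)=\rank_RM_i\cdot\rank_RM$, combined with the observation that the distinct summands $\Hom_R(M_i,M_j)$ (having classes $[M_j]-[M_i]$ and thus pairwise non-isomorphic) must exhaust the rank-one indecomposables in $\add_RM$ or $\add_RM^*$, should pin down $\rank_RM_i=1$ and $\Hom_R(M_i,M)\cong M$ or $\cong M^*$. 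In particular $M$ is splitting.

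For (a) itself, I would pass to the class group $\Cl(R)$. Setting $[M_i]\in\Cl(R)$ and $S\coloneqq\{[M_0],\ldots,[M_n]\}$ (so $|S|=n+1$ since $M$ is basic), the class-group identity $[\Hom_R(M_i,M_j)]=[M_j]-[M_i]$ translates the conditions defining $\ttI$ and $\ttI^*$ into $[M_i]+S=S$ and $[M_i]+S=-S$ respectively. Thus $\ttI=\{i:[M_i]\in H\}$ and $\ttI^*=\{i:[M_i]\in T\}$, where $H\coloneqq\{g\in\Cl(R):g+S=S\}$ is the stabilizer of $S$ (a subgroup of $\Cl(R)$) and $T\coloneqq\{g:g+S=-S\}$ is a coset of $H$ whenever it is non-empty.

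Non-emptiness of $\ttI^*$ follows because otherwise $\End_R(M)\cong M^{n+1}\in\add_RM$, making $M$ steady. So $T$ is a genuine $H$-coset with $|T|=|H|$. Because $S$ is $H$-invariant, $S$ is a union of $H$-cosets in $\Cl(R)$; since $0=[R]\in S\cap H$, the whole coset $H$ lies inside $S$, giving $|\ttI|=|H|$; similarly, once $T\cap S\neq\emptyset$ the whole coset $T$ lies in $S$, giving $|\ttI^*|=|T|=|H|$. Hence $|\ttI|=|\ttI^*|$, proving (a); and (b) then follows from $n+1=|S|=|H|+|T|=2|H|$. The main obstacle I anticipate is the rank-reduction step, i.e., rigorously deducing $\Hom_R(M_i,M)\cong M$ (or $\cong M^*$) from the weaker membership $\in\add_RM$ (or $\in\add_RM^*$); the class-group bookkeeping itself should then be routine.
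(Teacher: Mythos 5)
Your stabilizer/coset argument for (a) is correct once one grants that $\ttI \sqcup \ttI^* = \{0,\ldots,n\}$ and that $M$ is splitting, but it is a genuinely different route from the paper's, and strictly less general. The paper picks an indecomposable $M_s$ with $M_s \in \add_R M$ and $M_s \notin \add_R M^*$ (which exists since $M \not\cong M^*$) and double-counts the occurrences of $M_s$ among the blocks $\Hom_R(M_i,M_j)$ of $\End_R(M)$: counting by rows gives $\calI$, while counting by columns --- via $\Hom_R(M,M_j) \cong \Hom_R(M_j,M)^*$ from Lemma~\ref{basic_lem_ref} --- gives $\calI^*$; since $M$ is basic the two counts agree, and (b) is then immediate. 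Your class-group route produces nicer structure (the stabilizer subgroup $H$ and a coset) but requires $M$ splitting, since the identity $[\Hom_R(M_i,M_j)] = [M_j] - [M_i]$ presupposes rank-one summands; the paper's counting needs no such hypothesis. There is also a small sign slip in your translation: $\Hom_R(M_i,M) \cong M^*$ corresponds to $[M_i] - S = S$, not $[M_i] + S = -S$ --- both define cosets of $H$, so the final cardinality count is unaffected, but the index sets they describe may differ.

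The obstacle you correctly flag --- upgrading $\Hom_R(M_i,M) \in \add_R M$ (resp.\ $\add_R M^*$) to $\cong M$ (resp.\ $\cong M^*$) --- is genuine, but your sketched fix is circular: you appeal to pairwise distinctness of the classes $[M_j]-[M_i]$ to infer rank one, while those classes only make sense after rank one is established. Note that the paper's proof also silently assumes $\ttI \sqcup \ttI^* = \{0,\ldots,n\}$: the claim that ``the number of rows appearing $M_s$ is $\calI$'' presupposes that $M_s$ never occurs in a row indexed outside $\ttI \cup \ttI^*$. In the paper's intended applications (splitting modules giving NCCRs, as in Theorem~\ref{class_semi_steady}) the stronger statement $\Hom_R(M_i,M) \cong M$ or $M^*$ follows from the maximality of modules giving NCCRs (\cite[Proposition~4.5]{IW2}), exactly as in Lemma~\ref{lem_semi}. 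So the gap is contextual rather than peculiar to your approach, but since you attempted a stand-alone argument the circularity should be repaired or the hypothesis strengthened.
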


\begin{proof}
(a) First, we have that $M\not\cong M^*$, because $M$ is not steady (see Lemma~\ref{semisteady_iff_steady}). 
Thus, there exists a direct summand $M_s\in\add_RM$ such that $M_s\not\in\add_RM^*$, 
and hence we have that $M_s^*\in\add_RM^*$ and $M_s^*\not\in\add_RM$. 
In the description 
\[
\End_R(M)\cong
\begin{pmatrix}
\Hom_R(M_0,M_0)&\Hom_R(M_0,M_1)&\cdots&\Hom_R(M_0,M_n) \\
\Hom_R(M_1,M_0)&\Hom_R(M_1,M_1)&\cdots&\Hom_R(M_1,M_n) \\
\vdots&\vdots&\ddots&\vdots \\
\Hom_R(M_n,M_0)&\Hom_R(M_n,M_1)&\cdots&\Hom_R(M_n,M_n) \\
\end{pmatrix}, 
\]
the number of rows in which $M_s$ (resp. $M_s^*$) appears is $\calI$ (resp. $\calI^*$). 
By Lemma~\ref{basic_lem_ref}, the number of columns in which $M_s$ (resp. $M_s^*$) appears is $\calI^*$ (resp. $\calI$). 
Since $M$ is basic, we have that $\calI=\calI^*$. 

(b) Since $n+1=\calI+\calI^*$, this follows from (a). 
\end{proof}

We remark that if an $R$-module $M$ satisfying the assumption in Lemma~\ref{number_lem} is splitting, then the definition of $\ttI$ and $\ttI^*$ can be 
replaced by $\ttI\coloneqq\{i \mid \Hom_R(M_i,M)\cong M\}$ and $\ttI^*\coloneqq\{i \mid \Hom_R(M_i,M)\cong M^*\}$ because $M$ is basic and $\rank_R\Hom_R(M_i,M)=\rank_RM$ for all $i$. 

\medskip

Next, we consider the class group $\Cl(R)$. 
We know that by \cite[Proposition~2.8]{IN} the class group of a CM normal domain having a steady splitting NCCR is a finite abelian group. 
Thus, we consider a CM normal domain having a semi-steady splitting NCCR that is not steady. 

\begin{theorem}
\label{class_semi_steady}
Let $R$ be a CM normal domain and assume that every rank one reflexive $R$-module, 
whose class in $\Cl(R)$ is a torsion element, is an MCM $R$-module (e.g., $R$ is a toric singularity). 
Suppose that $M=\bigoplus_{i=0}^nM_i$ is a basic $R$-module giving a semi-steady splitting NCCR that is not steady. 
Then, $\Cl(R)\cong\ZZ\times A$ where $A$ is the torsion subgroup and the order of $A$ is equal to $\frac{n+1}{2}$. 
In particular, $\Cl(R)$ contains a torsion element if and only if $n\neq 1$.
\end{theorem}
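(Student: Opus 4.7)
My strategy is to identify a subgroup $H \subseteq \Cl(R)$ of order $(n+1)/2$ that will serve as the torsion part $A$, show $\Cl(R)/H \cong \ZZ$, and split the resulting short exact sequence. Let $G := \{[M_0],\ldots,[M_n]\} \subseteq \Cl(R)$. Since $M$ is basic and each $M_i$ is a rank one reflexive (hence determined by its class), $G$ has $n+1$ distinct elements, and $0 \in G$ as $M$ is a generator. Using $[\Hom_R(I,J)] = [J]-[I]$ for rank one reflexives in a normal domain, $\Hom_R(M_i, M)$ is a direct sum of rank one reflexives with class set $G - [M_i]$. Therefore $\Hom_R(M_i,M) \cong M$ iff $G - [M_i] = G$, and $\Hom_R(M_i,M) \cong M^*$ iff $G - [M_i] = -G$. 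Define $H := \{g \in \Cl(R) : G - g = G\}$, manifestly a subgroup. I plan to check $H = \{[M_i] : i \in \ttI\}$: the inclusion $\supseteq$ is immediate; for $\subseteq$, the relation $G - h = G$ rewrites as $G + h = G$, giving $h = 0 + h \in G$ and forcing the corresponding index into $\ttI$. By Lemma~\ref{number_lem}, $|H| = (n+1)/2$. Fixing $c_0 \in \{[M_j] : j \in \ttI^*\}$ (nonempty since $M$ is not steady), the analogous argument yields $\{[M_j] : j \in \ttI^*\} = c_0 + H$, so $G = H \sqcup (c_0 + H)$. The non-steadiness of $M$ (Lemma~\ref{semisteady_iff_steady}) translates to $G \neq -G$, forcing $2c_0 \notin H$.

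Next, I plan to prove $\Cl(R)/H \cong \ZZ$. For cyclicity, I will argue $\Cl(R) = \langle G\rangle$, using the NCCR property of $\End_R(M)$ essentially; modulo $H$, this reduces to the single generator $\bar{c_0}$, so $\Cl(R)/H$ is cyclic. For infinitude, suppose towards contradiction that $\bar{c_0}$ has finite order $m$. Then $\Cl(R) = H + \langle c_0\rangle$ is finite, so by the MCM-torsion hypothesis every rank one reflexive $R$-module is MCM. Setting $N := \bigoplus_{g \in \Cl(R)} I_g$ with $I_g$ a divisorial ideal of class $g$, the module $N$ is a basic reflexive generator satisfying $\add_R N = \add_R N^*$. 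Once $\End_R(N)$ is verified to be an NCCR, $N$ is a steady splitting NCCR of $R$; Theorem~\ref{main_IN} then forces $M \cong N$ via the uniqueness of a basic module giving a splitting NCCR, yielding $G = \Cl(R) = -\Cl(R) = -G$, contradicting $2c_0 \notin H$.

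Given $\Cl(R)/H \cong \ZZ$, the freeness of $\ZZ$ splits the sequence $0 \to H \to \Cl(R) \to \ZZ \to 0$, so $\Cl(R) \cong H \times \ZZ$. Since $H$ is finite and the $\ZZ$-factor is torsion-free, $H$ coincides with the torsion subgroup $A$ of $\Cl(R)$, of order $(n+1)/2$. The final assertion follows at once: $A$ is nontrivial iff $(n+1)/2 > 1$ iff $n \neq 1$. The main obstacles are (i) the generation $\Cl(R) = \langle G\rangle$, which must exploit the NCCR property of $\End_R(M)$---this is transparent in the toric dimer model setting of Theorem~\ref{main_intro} via the perfect-matching description of divisor classes---and (ii) the NCCR property of the enlargement $N$ in the finite-$\Cl(R)$ case, which amounts to a completion-type statement for NCCRs under the MCM-torsion hypothesis.
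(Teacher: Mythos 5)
Your proposal takes a genuinely different route from the paper's: you package the torsion part as the stabilizer subgroup $H=\{g\in\Cl(R) : G-g=G\}$ where $G=\{[M_0],\dots,[M_n]\}$, realize $G=H\sqcup(c_0+H)$, and then analyze the short exact sequence $0\to H\to\Cl(R)\to\Cl(R)/H\to 0$. The paper instead argues in three separate contradiction steps (not finite, free rank exactly one, then a bijection $\ttI\leftrightarrow\ttI^*$ counting the torsion) without ever naming $H$. Your framing is structurally cleaner and makes the counting of $A$ and the decomposition $\Cl(R)\cong\ZZ\times A$ fall out simultaneously, which is a real gain, but two points need attention.

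First, a minor one: the step ``$G-h=G$ rewrites as $G+h=G$'' is not a formal rewrite. You need to observe that since $G$ is finite and $G-kh=G$ for all $k\ge 0$, the translates $-kh$ of $0\in G$ all lie in the finite set $G$, so $h$ has finite order, and only then does $G-h=G$ imply $G+h=G$ and hence $h\in G$. With that fix $H=\{[M_i]: i\in\ttI\}$ and Lemma~\ref{number_lem} gives $|H|=\tfrac{n+1}{2}$ as you say.

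Second, and more seriously, the infinitude step as written is incomplete. You set $N=\bigoplus_{g\in\Cl(R)}I_g$ and say ``once $\End_R(N)$ is verified to be an NCCR'' you can invoke Theorem~\ref{main_IN}; but verifying that $N$ gives an NCCR is precisely the hard part, and you give no route to it. The paper avoids this entirely: since $\Cl(R)$ is finite, for \emph{any single} rank-one reflexive $N$ the algebra $\End_R(M\oplus N)$ has all its direct summands of torsion class, hence is MCM, and then \cite[Proposition~4.5]{IW2} (maximality of modules giving NCCRs) forces $N\in\add_RM$. This shows $\Cl(R)=G$ directly, and then $G=-G$ contradicts non-steadiness by Lemma~\ref{semisteady_iff_steady}. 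You should replace your ``completion-type statement for NCCRs'' with this one-line maximality argument; as stated your proposal has a gap there. Also note that $\Cl(R)=\langle G\rangle$ is not special to the toric case — it is \cite[Proposition~2.8(a)]{IN} and holds in the generality of the theorem, so you can just cite it rather than deferring to the dimer setting.
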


\begin{proof}
Let $M_0=R$. We define the set
\[
\calM\coloneqq\{ \, [M_0], [M_1],\cdots, [M_n]\, \}. 
\]
We know that $\Cl(R)$ is generated by $[M_0],\cdots,[M_n]$ (see \cite[Proposition~2.8(a)]{IN}). 

First, we assume that $\Cl(R)$ is a finite group. 
For any rank one reflexive module $N$, we consider $\End_R(M\oplus N)$. 
Since $\Cl(R)$ is finite, $[\End_R(M\oplus N)]$ is a torsion element in $\Cl(R)$, thus $\End_R(M\oplus N)\in\CM R$ by the assumption. 
By \cite[Proposition~4.5]{IW2}, this implies $N\in\add_RM$, hence we have that $\Cl(R)=\calM$. 
Thus, we see that $M$ is steady by \cite[Theorem~3.1]{IN}, and hence we conclude $\Cl(R)$ is not a finite group. 

Next we show that the rank of the free part of $\Cl(R)$ is one. Let $\ttI, \ttI^*$ be the sets as in Lemma~\ref{number_lem}. 
If $\Cl(R)$ contains $\ZZ^2$, then we can take two elements in $\calM$ generating $\ZZ^2$. 
Let $[M_1], [M_2]$ be such generators. Note that these are not torsion.
Since $M$ is semi-steady, we have that $\Hom_R(M_1,M)\cong M$ or $M^*$. 
If $1\in\ttI$ holds (i.e., $\Hom_R(M_1,M)\cong M$), then $-[M_1]\in\calM$. 
Further, since $M\cong\Hom_R(M_1,M)\cong\Hom_R(M_1,\Hom_R(M_1,M))$, we also have that $-2[M_1]\in\calM$.
By repeating this argument, we have that $-t[M_1]\in\calM$ for any integer $t\ge 1$. 
Since the number of elements in $\calM$ is finite and $[M_1]$ is torsion-free, this is a contradiction, thus we have that $1\in\ttI^*$. 
Similarly, we also have that $2\in\ttI^*$. 
Therefore, we have that $\Hom_R(M_1,M_2), \Hom_R(M_2,M_1)\in\add_RM^*$, and this also implies $\Hom_R(M_1,M_2), \Hom_R(M_2,M_1)\in\add_RM$. 
By this observation, we may write $[M_2]-[M_1]=[M_s]$ for some $s\in[0,n]$. 
If $s\in\ttI$,  then we have that 
\[
[\Hom_R(M_s,\Hom_R(M_2,M_1))]=[\Hom_R(M_2,M_1)]-[M_s]=2[M_1]-2[M_2]\in\calM. 
\]
If $s\in\ttI^*$,  then we have that 
\[
[\Hom_R(\Hom_R(M_2,M_1),M_s)]=[M_s]-[\Hom_R(M_2,M_1)]=2[M_2]-2[M_1]\in\calM. 
\]
In any case, we have that $t[M_1]-t[M_2]\in\calM$ for any non-zero integer $t$ by repeating the above argument. 
Since $[M_1], [M_2]$ are torsion-free and generators of $\ZZ^2$, this contradicts the finiteness of $\calM$. 
Therefore, we conclude $\Cl(R)\cong\ZZ\times A$ where $A$ is the torsion subgroup. 

Finally, we show that the order of $A$ is equal to $\calI=\frac{n+1}{2}$. 
(Recall that $\calI$ is the number of elements in $\ttI$, and it is the same as that of elements in $\ttI^*$.) 
Let $[M_1]$ be a torsion-free element generating the free part of $\Cl(R)$. 
Clearly, $0\in \ttI$ holds. Further, we see that $1\in\ttI^*$ by the same argument as above. 
Next, for a subscript $i\in\ttI$, we may write $\Hom_R(M_i,M_j)\cong M_k$ for some $j, k\in[0,n]$. 
In this situation, we have the following claim: 
\begin{equation}
\label{claim}
\text{If $j\in\ttI^*$, then we have that $k\in\ttI^*$}. 
\end{equation}
This follows from an isomorphism 
\begin{equation*}
\begin{split}
\Hom_R(M_k,M)&\cong\Hom_R(\Hom_R(M_i,M_j),M) \\
&\cong\Hom_R(\Hom_R(M_i,M_j),\Hom_R(M_i,M))\cong\Hom_R(M_j,M)\cong M^*.
\end{split}
\end{equation*}
Since $1\in\ttI^*$, we especially have that 
\begin{equation}
\label{bijection}
[M_1]-[M_i]=[M_k], 
\end{equation}
for some $i\in\ttI, k\in\ttI^*$, and easy to see that this equation induces a bijection between $\ttI$ and $\ttI^*$. (Note that $0\in\ttI$ corresponds to $1\in\ttI^*$.)  
Thus, the torsion subgroup $A$ is generated by $[M_i]$'s with $i\in\ttI$. 

Let $N$ be a rank one reflexive module, whose class is $[N]=\sum_{i\in\ttI}t_i[M_i]$. 
Since $[M_i]$'s are torsion elements, we may assume $t_i\in\ZZ_{\ge0}$. 
For $i\in\ttI$, we see that $M\cong\Hom_R(M_i,M)\cong\Hom_R(M_i,\Hom_R(M_i,M))$, 
and hence we may write $[M_1]-2[M_i]=[M_\ell]$ for some $\ell\in[0,n]$. Furthermore, we have that $\ell\in\ttI^*$ using the claim (\ref{claim}). 
By repeating this argument, we have that $[M_1]-[N]=[M_m]$ with $m\in\ttI^*$. 
A bijection induced by (\ref{bijection}) asserts that there exists $i^\prime\in\ttI$ such that $[N]=[M_{i^\prime}]$. 
Therefore, we have that $A=\{[M_i] \mid i\in\ttI \}$, especially $|A|=\calI$. 
\end{proof}

We give some examples of semi-steady NCCRs below. 
In particular, semi-steady NCCRs are well understood for the two dimensional case (see Proposition~\ref{twodim}). 

\begin{example}
\label{ex_Ansing}
Consider the $3$-dimensional simple singularity $R=k[[x,y,u,v]]/(x^2+y^{2n}+u^2+v^2)$ of type $A_{2n-1}$. 
It is well known that $R$ is of finite CM representation type (see e.g., \cite[Chapter~12]{Yos}), and the finitely many MCM $R$-modules are 
$R$, two modules $I$, $I^*$ with rank one, and $(n-1)$ modules $N_1,\cdots,N_{n-1}$ with rank two. 
Then, modules giving NCCRs of $R$ are only $R\oplus I$ and $R\oplus I^*$ (see \cite[Proposition~2.4]{BIKR}, \cite[Example~3.6]{Dao}). 
We easily see that they are semi-steady, but not steady. 
\end{example}

\begin{example}
\label{cAn-sing}
We consider a complete local $cA_n$-singularity $R=k[[x,y,u,v]]/(f-uv)$ where $f\in\fkm=(x,y)$. 
Let $f=f_1\cdots f_n$ be a decomposition of $f$ into prime elements in $k[[x,y]]$. 
(Note that some elements $f_i$ might be the same element.) 
We consider a subset $I\subset\{1,\cdots,n\}$, and set $f_I=\prod_{i\in I}f_i$. 
Further, we define the ideal $T_I\coloneqq (u,f_I)\subset R$. 
For each $\omega\in\fkS_n$, we consider the maximal flag which is a sequence of subsets: 
\[
I_1^\omega=\{\omega(1)\}\subset I_2^\omega=\{\omega(1),\omega(2)\}\subset\cdots\subset I_{n-1}^\omega=\{\omega(1),\omega(2),\cdots,\omega(n-1)\}. 
\]
If $f_i\not\in\fkm^2$ for all $i$, then modules giving NCCRs of $R$ are precisely 
$$T^\omega\coloneqq R\oplus\bigoplus^{n-1}_{j=1}T_{I_j^\omega}$$ where $\omega\in\fkS_n$ (see \cite[Theorem~5.1]{IW4}) 
and clearly all NCCRs are splitting. 

Furthermore, by using results in \cite[Section~5]{IW4}, we can show the following: 
\begin{enumerate}[(a)]
\item $R$ has a steady NCCR if and only if $f=f_1^n$. In this case, maximal flags are only 
\[
\{1\}\subset\{1, 1\}\subset\cdots\subset\{\underbrace{1,1,\cdots,1}_{n-1}\}, 
\]
and this gives a unique steady splitting NCCR. Further, $R$ is isomorphic to the invariant subring under the action of the cyclic group 
generated by $\mathrm{diag}(1,\zeta_n,\zeta^{-1}_n)$ where $\zeta_n$ is a primitive $n$-th root of unity, 
and it is the polynomial extension of a $2$-dimensional $A_{n-1}$-singularity. 
\item $R$ has a semi-steady NCCR that is not steady if and only if $f=f_1^af_2^a$ where $n=2a$. 
In this case, the following two maximal flags give semi-steady NCCRs that are not steady. 
\[
\{1\}\subset\{1,2\}\subset\{1,1,2\}\subset\{1,1,2,2\}\subset\cdots\subset\{\underbrace{1,1,\cdots,1}_{a},\underbrace{2,2,\cdots,2}_{a-1}\}, 
\]
\[
\{2\}\subset\{1,2\}\subset\{1,2,2\}\subset\{1,1,2,2\}\subset\cdots\subset\{\underbrace{1,1,\cdots,1}_{a-1},\underbrace{2,2,\cdots,2}_{a}\}. 
\]
\end{enumerate}
\end{example}

\begin{proposition}
\label{twodim}
Let $R$ be a $2$-dimensional complete local normal domain containing an algebraically closed field of characteristic zero. 
Then, the following conditions are equivalent.
\begin{enumerate}[\rm (1)]
\item $R$ is a quotient singularity associated with a finite group $G\subset\GL(2, k)$ (i.e., $R=S^G$ where $S=k[[x_1, x_2]]$). 
\item $R$ has a steady NCCR.
\item $R$ has a semi-steady NCCR.
\item $R$ has an NCCR. 
\item $R$ is of finite CM representation type, that is, $R$ has only finitely many non-isomorphic indecomposable MCM $R$-modules.
\end{enumerate}
When this is the case, modules giving NCCRs of $R$ are additive generators of $\CM R$.
\end{proposition}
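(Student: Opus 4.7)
The strategy is to prove the cycle $(1)\Rightarrow(2)\Rightarrow(3)\Rightarrow(4)\Rightarrow(5)\Rightarrow(1)$, with the concluding sentence of the proposition falling out of the argument for $(4)\Rightarrow(5)$. The implications $(2)\Rightarrow(3)\Rightarrow(4)$ are immediate from the definitions: steadiness visibly implies semi-steadiness (every row $\Hom_R(M_i,M)$ of the matrix expression of $\End_R(M)$ lies in $\add_RM$), and a semi-steady NCCR is in particular an NCCR.

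For $(1)\Rightarrow(2)$ I would take $R=S^G$ and, using Chevalley--Shephard--Todd, assume $G$ is small. Put $M=S$. Auslander's classical theorem identifies $\End_R(S)$ with the skew group algebra $S\ast G$, a non-singular $R$-order, so $S$ gives an NCCR. The presentation $S\ast G=\bigoplus_{g\in G}Sg$ shows that as an $R$-module $\End_R(S)\cong S^{\oplus|G|}\in\add_RS$, while the McKay decomposition $S=\bigoplus_{\rho\in\Irr G}V_\rho^{\oplus\dim\rho}$ exhibits $R=V_{\mathrm{triv}}$ as the trivial-representation component of $S$, so $R\in\add_RS$. Thus $S$ is steady. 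For $(5)\Rightarrow(1)$ I would invoke the Auslander--Esnault theorem: a two-dimensional complete local normal domain over an algebraically closed field of characteristic zero has finite CM representation type if and only if it is isomorphic to $S^G$ for some finite small $G\subset\GL(2,k)$.

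The real content is $(4)\Rightarrow(5)$ together with the concluding sentence, for which I would prove that any $M$ giving an NCCR satisfies $\add_RM=\CM R$. The key tool is Auslander--Buchsbaum: for $N\in\CM R$, the reflexive module $\Hom_R(M,N)$ is MCM over $R$ (since $\dim R=2$ and $R$ is normal, reflexive equals MCM), hence MCM over the module-finite $R$-algebra $\End_R(M)$; combined with $\gldim\End_R(M)=2=\depth_{\End_R(M)}\End_R(M)$, this forces $\Hom_R(M,N)$ to be projective over $\End_R(M)$. Assuming $R\in\add_RM$ and letting $e\in\End_R(M)$ be the idempotent cutting out the $R$-summand, the natural identity $e\cdot\Hom_R(M,X)=\Hom_R(R,X)=X$ and the equivalence from $\add_RM$ to $\proj\End_R(M)$ induced by $\Hom_R(M,-)$ together place $N$ in $\add_RM$. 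In particular $\add_RM=\CM R$, which simultaneously yields finite CM representation type and the final assertion that every NCCR module is an additive generator of $\CM R$.

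The principal obstacle is the reduction to the case $R\in\add_RM$ in the preceding step; I would handle it either by appealing to the standard fact that a basic NCCR module in dimension two necessarily contains $R$ as a summand, or by first enlarging $M$ to $M\oplus R$ and verifying that the NCCR property is preserved via Morita-type arguments from \cite{IW2}. Once this is in place, the rest of the proof is an assembly of well-known results (Auslander on skew group algebras, Auslander--Buchsbaum for orders, Auslander--Esnault for two-dimensional finite CM representation type).
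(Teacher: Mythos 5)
Your proposal is correct, but it takes a genuinely different route from the paper. The paper's proof consists of two lines: the implications $(2)\Rightarrow(3)\Rightarrow(4)$ are noted to be immediate, and the remaining equivalences $(1)\Leftrightarrow(2)\Leftrightarrow(4)\Leftrightarrow(5)$ together with the concluding sentence are delegated to the citation of Proposition~4.2 of \cite{IN}. You, by contrast, reconstruct that cited result from scratch: $(1)\Rightarrow(2)$ via Auslander's $\End_R(S)\cong S\ast G$ and the observation that $S\ast G\cong S^{\oplus|G|}$ and $R=V_{\mathrm{triv}}\subset S$ make $S$ steady; $(5)\Rightarrow(1)$ via Auslander--Esnault; and $(4)\Rightarrow(5)$ via the Auslander--Buchsbaum formula for the non-singular $R$-order $\End_R(M)$, which forces $\Hom_R(M,N)$ to be projective for every $N\in\CM R$ and hence $\CM R=\add_RM$. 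The paper's version buys brevity at the cost of opacity; yours is self-contained and makes the mechanism visible, which is exactly what \cite[Proposition~4.2]{IN} does internally.

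One point worth tightening is your treatment of the reduction to $R\in\add_RM$. Your first suggested route (that a basic module giving an NCCR in dimension two automatically has $R$ as a summand) is the correct one, and it can be justified cleanly without circularity: applying the Auslander--Buchsbaum step to $N=R$ shows $\Hom_R(M,R)=M^*$ is projective over $\End_R(M)$, hence a summand of some $\Hom_R(M,M)^n$; since the functor $\Hom_R(M,-)$ is fully faithful on reflexive modules over a normal domain, the splitting descends and exhibits $R$ as a summand of $M^n$. Your second suggested route, enlarging $M$ to $M\oplus R$ and invoking a Morita-type argument, is murkier as stated, because $\End_R(M\oplus R)$ being an NCCR is not automatic; it can be rescued via \cite[Proposition~4.5]{IW2} (which in fact the paper uses elsewhere for precisely this kind of maximality argument), but the first route is the cleaner one to commit to.
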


\begin{proof}
(2)$\Rightarrow$(3)$\Rightarrow$(4) is clear. Therefore the assertion follows from \cite[Proposition~4.2]{IN}. 
\end{proof}

\section{\bf NCCRs arising from dimer models}
\label{NCCR_via_dimer}

In this section, we present several results concerning dimer models. 
In particular, we will show that a splitting non-commutative crepant resolution of a $3$-dimensional Gorenstein toric singularity 
is obtained from a consistent dimer model. 
For more results regarding dimer models, we refer to \cite{Boc4} and references quoted in this section. 

\subsection{Preliminaries on toric singularities}
\label{toric_pre}
We start this subsection with recalling some basic facts concerning toric singularities. For more details, see e.g., \cite{BG2,CLS}. 

Let $\sfN\cong\ZZ^d$ be a lattice, and $\sfM\coloneqq\Hom_\ZZ(\sfN, \ZZ)$ be the dual lattice of $\sfN$. 
Let $\sfN_\RR\coloneqq\sfN\otimes_\ZZ\RR$ and $\sfM_\RR\coloneqq\sfM\otimes_\ZZ\RR$. 
We denote an inner product by $\langle\;,\;\rangle:\sfM_\RR\times\sfN_\RR\rightarrow\RR$. 
In addition, let 
\[
\sigma\coloneqq\mathrm{Cone}(v_1, \cdots, v_n)=\RR_{\ge 0}v_1+\cdots +\RR_{\ge 0}v_n
\subset\sfN_\RR 
\]
be a strongly convex rational polyhedral cone generated by $v_1, \cdots, v_n\in\ZZ^d$. 
Suppose that this system of generators is minimal. 
For each generator, we define the linear form $\lambda_i(-)\coloneqq\langle-, v_i\rangle$, 
and denote $\lambda(-)\coloneqq(\lambda_1(-),\cdots,\lambda_n(-))$. 
We consider the dual cone $\sigma^\vee$: 
\[
\sigma^\vee\coloneqq\{x\in\sfM_\RR \mid \langle x,y\rangle\ge0 \text{ for all } y\in\sigma \}. 
\]
Then, we consider the $\fkm$-adic completion of a \emph{toric singularity} 
\[
R\coloneqq k[[\sigma^\vee\cap\sfM]]=k[[t_1^{a_1}\cdots t_d^{a_d}\mid (a_1, \cdots, a_d)\in\sigma^\vee\cap\sfM]], 
\] 
where $\fkm$ is the irrelevant maximal ideal. In our setting, $R$ is a $d$-dimensional CM normal domain, 
and it is known that $R$ is Gorenstein if and only if there exists $x\in\sigma^\vee\cap\ZZ^d$ such that $\lambda_i(x)=1$ for all $i=1, \cdots, n$ 
(see e.g., \cite[Theorem~6.33]{BG2}). 

For each $\mathbf{u}=(u_1, \cdots, u_n)\in\RR^n$, we define 
\[
\mathbb{T}(\mathbf{u})\coloneqq\{x\in\sfM\cong\ZZ^d \mid (\lambda_1(x), \cdots, \lambda_n(x))\ge(u_1, \cdots, u_n)\}. 
\]
Then, we define the divisorial ideal $T(\mathbf{u})$ generated by all monomials whose exponent vector is in $\mathbb{T}(\mathbf{u})$. 
Clearly, we have that $T(\mathbf{u})=T(\ulcorner \mathbf{u}\urcorner)$ 
where $\ulcorner \mathbf{u}\urcorner=(\ulcorner u_1\urcorner, \cdots, \ulcorner u_n\urcorner)$, 
thus we will assume $\mathbf{u}\in\ZZ^n$ in the rest of this paper. 
In general, a divisorial ideal of $R$ takes this form. 
In addition, for $\mathbf{u}, \mathbf{u}^\prime\in\ZZ^n$, $T(\mathbf{u})\cong T(\mathbf{u}^\prime)$ as an $R$-module 
if and only if there exists $y\in\sfM$ such that $u_i=u_i^\prime+\lambda_i(y)$ for all $i=1, \cdots, n$ (see \cite[Corollary~4.56]{BG2}). 
Thus, we have the exact sequence: 
\[
0\rightarrow\ZZ^d\xrightarrow{\lambda(-)}\ZZ^n\rightarrow\Cl(R)\rightarrow0, 
\]
we especially have the following. 

\begin{lemma}
\label{cl_toric}
The class group $\Cl(R)$ is isomorphic to $\ZZ^n/\lambda(\ZZ^d)$. 
In particular, the rank of the free part of $\Cl(R)$ is $n-d$. 
\end{lemma}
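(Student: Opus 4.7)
The plan is to read the two assertions off of the short exact sequence
$$0\to\ZZ^d\xrightarrow{\lambda(-)}\ZZ^n\to\Cl(R)\to 0$$
that is already displayed immediately before the lemma; the work reduces to making the right-hand map explicit and checking that the left-hand map is injective.

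First, I would define $\varphi\colon\ZZ^n\to\Cl(R)$ by $\mathbf{u}\mapsto[T(\mathbf{u})]$. That this is a well-defined group homomorphism comes from the fact that the reflexive hull of $T(\mathbf{u})\cdot T(\mathbf{u}')$ equals $T(\mathbf{u}+\mathbf{u}')$, which in turn follows from the inclusion $\mathbb{T}(\mathbf{u})+\mathbb{T}(\mathbf{u}')\subseteq\mathbb{T}(\mathbf{u}+\mathbf{u}')$ together with standard reflexivity arguments for monomial ideals in the toric ring $R$. Surjectivity of $\varphi$ is exactly the remark recorded in the text that every divisorial ideal of $R$ is of the form $T(\mathbf{u})$. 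For the kernel, an element $\mathbf{u}\in\ZZ^n$ lies in $\ker\varphi$ precisely when $T(\mathbf{u})\cong R=T(\mathbf{0})$, and by the isomorphism criterion already cited ($T(\mathbf{u})\cong T(\mathbf{u}')$ iff $u_i=u_i'+\lambda_i(y)$ for some $y\in\sfM$), this happens iff $\mathbf{u}\in\lambda(\sfM)=\lambda(\ZZ^d)$. Combining these gives the desired isomorphism $\Cl(R)\cong\ZZ^n/\lambda(\ZZ^d)$.

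For the rank statement, I would argue that $\lambda\colon\ZZ^d\to\ZZ^n$ is injective and its image has rank $d$. Because $\sigma$ is a strongly convex rational polyhedral cone of full dimension in $\sfN_\RR$, its minimal ray generators $v_1,\dots,v_n$ span $\sfN_\RR$; hence if $\lambda_i(x)=\langle x,v_i\rangle=0$ for all $i$, then $x=0$. Consequently $\lambda(\ZZ^d)$ is a free abelian subgroup of $\ZZ^n$ of rank $d$, and the free rank of the cokernel $\Cl(R)$ is $n-d$.

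There is essentially no obstacle here: all the input (parametrization of divisorial ideals, the isomorphism criterion, full-dimensionality of $\sigma$) is already in place in the paper. The only minor point that deserves a sentence of justification is the multiplicativity $(T(\mathbf{u})T(\mathbf{u}'))^{**}\cong T(\mathbf{u}+\mathbf{u}')$ needed to see that $\varphi$ is a homomorphism rather than merely a surjection of sets.
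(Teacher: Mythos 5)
Your proposal is correct and follows essentially the same route as the paper: the paper simply reads the two assertions off the short exact sequence $0\to\ZZ^d\xrightarrow{\lambda}\ZZ^n\to\Cl(R)\to 0$ displayed just above the lemma (itself a standard consequence of the parametrization of divisorial ideals via $T(\mathbf{u})$ and the isomorphism criterion cited from Bruns--Gubeladze), and your write-up just makes the surjectivity, the kernel computation, and the injectivity of $\lambda$ (which uses that $\sigma$ is full-dimensional, implicit in the paper's setup where $R$ is $d$-dimensional) explicit.
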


In this paper, we will investigate $3$-dimensional Gorensitein toric singularities, 
thus we can take the hyperplane $z=1$ so that generators $v_1,\cdots,v_n$ lie on this (i.e., the third coordinate of $v_i$ is $1$). 
Hence, we have the lattice polygon $\Delta\subset\RR^2$ on this hyperplane. 
Conversely, for a given lattice polygon $\Delta$ in $\RR^2$, we define the cone $\sigma_\Delta\subset\RR^3$ whose section on the hyperplane $z=1$ is $\Delta$. 
Then, the toric singularity $R=k[[\sigma_\Delta^\vee\cap\ZZ^3]]$ associated with such a cone is Gorenstein in dimension three. 
In the rest of this paper, we call $R$ obtained by the above manner the \emph{toric singularity associated with $\Delta$}, and call $\Delta$ the \emph{toric diagram} of $R$. 
We note that unimodular transformations of $\Delta$ in $\RR^2$ do not change the associated toric singularity up to isomorphism, 
thus we will discuss toric diagrams up to unimodular transformations.

\subsection{Dimer models and quivers with potentials} 
\label{subsec_dimer}

A \emph{dimer model} (or \emph{brane tiling}) is a polygonal cell decomposition of the real two-torus $\sfT\coloneqq\RR^2/\ZZ^2$, 
whose nodes and edges form a finite bipartite graph. 
Therefore, we color each node either black or white, and each edge connects a black node to a white node. 
For a dimer model $\Gamma$, we denote the set of nodes (resp. edges, faces) of $\Gamma$ by $\Gamma_0$ (resp. $\Gamma_1$, $\Gamma_2$). 
We also obtain the bipartite graph $\widetilde{\Gamma}$ on $\RR^2$ induced via the universal cover $\RR^2\rightarrow\sfT$, 
hence we call $\widetilde{\Gamma}$ the universal cover of a dimer model $\Gamma$. 
For example, the left hand side of Figure~\ref{ex_quiver4a} is a dimer model where the outer frame is the fundamental domain of the torus $\sfT$, 
and this is a regular dimer model. 

As the dual of a dimer model $\Gamma$, we define the quiver $Q_\Gamma$ associated with $\Gamma$. 
Namely, we assign a vertex dual to each face in $\Gamma_2$, an arrow dual to each edge in $\Gamma_1$.  
The orientation of arrows is determined so that the white node is on the right of the arrow. 
For example, the right hand side of Figure~\ref{ex_quiver4a} is the quiver obtained from the dimer model on the left. 
(Note that common numbers are identified in this figure.) 
Sometimes we simply denote the quiver $Q_\Gamma$ by $Q$. 
We denote the set of vertices by $Q_0$ and the set of arrows by $Q_1$. 
We consider the set of oriented faces $Q_F$ as the dual of nodes on a dimer model $\Gamma$. 
The orientation of faces is determined by its boundary, that is, faces dual to white (resp. black) nodes are oriented clockwise (resp. anti-clockwise). 
Therefore, we decompose the set of faces as $Q_F=Q^+_F\sqcup Q^-_F$ where $Q^+_F$, $Q^-_F$ denote the set of faces oriented clockwise and
that of faces oriented anti-clockwise respectively. 

\begin{figure}[H]
\begin{center}
\begin{tikzpicture}
\node (DM) at (0,0) 
{\scalebox{0.65}{
\begin{tikzpicture}
\node (P1) at (1,1){$$}; \node (P2) at (3,1){$$}; \node (P3) at
(3,3){$$}; \node (P4) at (1,3){$$};
\draw[line width=0.05cm]  (P1)--(P2)--(P3)--(P4)--(P1);\draw[line width=0.05cm] (-0.5,1)--(P1)--(1,-0.5); \draw[line width=0.05cm]  (4.5,1)--(P2)--(3,-0.5);
\draw[line width=0.05cm]  (-0.5,3)--(P4)--(1,4.5);\draw[line width=0.05cm]  (3,4.5)--(P3)--(4.5,3);
\filldraw  [ultra thick, fill=black] (1,1) circle [radius=0.16] ;\filldraw  [ultra thick, fill=black] (3,3) circle [radius=0.16] ;
\draw  [ultra thick,fill=white] (3,1) circle [radius=0.16] ;\draw  [ultra thick, fill=white] (1,3) circle [radius=0.16] ;

\draw[line width=0.05cm]  (0,0) rectangle (4,4);
\end{tikzpicture}
} }; 

\node (QV) at (5,0) 
{\scalebox{0.65}{
\begin{tikzpicture}

\node (Q1) at (2,2){$0$};\node (Q2a) at (0,2){$1$}; \node(Q2b) at (4,2){$1$};\node (Q3a) at (0,0){$2$};
\node(Q3c) at (4,4){$2$};\node(Q3b) at (4,0){$2$};\node(Q3d) at (0,4){$2$};\node (Q4a) at (2,0){$3$};
\node (Q4b) at (2,4){$3$};

\draw[lightgray, line width=0.05cm]  (P1)--(P2)--(P3)--(P4)--(P1);\draw[lightgray, line width=0.05cm] (-0.5,1)--(P1)--(1,-0.5); 
\draw[lightgray, line width=0.05cm]  (4.5,1)--(P2)--(3,-0.5);
\draw[lightgray, line width=0.05cm]  (-0.5,3)--(P4)--(1,4.5);\draw[lightgray, line width=0.05cm]  (3,4.5)--(P3)--(4.5,3);
\filldraw  [ultra thick, draw=lightgray, fill=lightgray] (1,1) circle [radius=0.16] ;\filldraw  [ultra thick, draw=lightgray, fill=lightgray] (3,3) circle [radius=0.16] ;
\draw  [ultra thick, draw=lightgray,fill=white] (3,1) circle [radius=0.16] ;\draw  [ultra thick, draw=lightgray,fill=white] (1,3) circle [radius=0.16] ;

\draw[->, line width=0.064cm] (Q1)--(Q2a);\draw[->, line width=0.064cm] (Q2a)--(Q3a);\draw[->, line width=0.064cm] (Q3a)--(Q4a);
\draw[->, line width=0.064cm] (Q4a)--(Q1);\draw[->, line width=0.064cm] (Q2a)--(Q3d);\draw[->, line width=0.064cm] (Q3d)--(Q4b);
\draw[->, line width=0.064cm] (Q4b)--(Q1);\draw[->, line width=0.064cm] (Q1)--(Q2b);\draw[->, line width=0.064cm] (Q2b)--(Q3b);
\draw[->, line width=0.064cm] (Q3b)--(Q4a);\draw[->, line width=0.064cm] (Q2b)--(Q3c);\draw[->, line width=0.064cm] (Q3c)--(Q4b);
\end{tikzpicture}
} }; 
\end{tikzpicture}
\caption{Dimer model and the associated quiver}
\label{ex_quiver4a}
\end{center}
\end{figure}
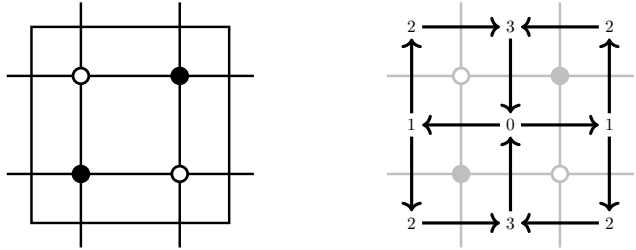

We define the maps $h, t:Q_1\rightarrow Q_0$ sending an arrow $a\in Q_1$ to the head of $a$ and the tail of $a$ respectively. 
A \emph{nontrivial path} is a finite sequence of arrows $a=a_1\cdots a_r$ with $h(a_\ell)=t(a_{\ell+1})$ for $\ell=1, \cdots r-1$. 
We define the \emph{length of path} $a=a_1\cdots a_r$ as $r \,(\ge 1)$, and denote by $Q_r$ the set of paths of length $r$. 
We consider each vertex $i\in Q_0$ as a trivial path $e_i$ of length $0$ where $h(e_i)=t(e_i)=i$. 
We extend the maps $h, t$ to the maps on paths, that is, $t(a)=t(a_1), h(a)=h(a_r)$ for a path $a=a_1\cdots a_r$. 
We say that a path $a$ is a \emph{cycle} if $h(a)=t(a)$. 
In addition, we denote the opposite quiver of $Q$ by $Q^{\rm op}$. 
That is, $Q^{\rm op}$ is obtained from $Q$ by reversing all arrows. 
Hence, we obtain the opposite quiver associated with the original dimer model by replacing white nodes by black nodes and vice versa. 

For a quiver $Q$, the \emph{complete path algebra} is defined as 
\[
\widehat{kQ}\coloneqq\prod_{r\ge 0}kQ_r 
\]
where $kQ_r$ is the vector space with a basis $Q_r$. The multiplication is defined as 
$a\cdot b=ab$ (resp. $a\cdot b=0$) if $h(a)=t(b)$ (resp. $h(a)\neq t(b)$) for paths $a, b$. We extend this multiplication linearly. 
Further, we set $\fkm_Q\coloneqq \prod_{r\ge 1}kQ_r$. 
For a subset $U\subseteq\widehat{kQ}$, we define the \emph{$\fkm_Q$-adic closure} of $U$ as 
$\overline{U}\coloneqq \bigcap_{n\ge 0}(U+\fkm_Q^n)$. 

Next, we define a potential. 
We denote by $[kQ, kQ]$ the $k$-vector space generated by all commutators in $kQ$ 
and set the vector space $kQ_{\mathrm{cyc}}\coloneqq kQ/[kQ, kQ]$, thus $kQ_{\mathrm{cyc}}$ has a basis consists of cycles in $Q$. 
We denote by $(kQ_{\mathrm{cyc}})_r$ the subspace of $kQ_{\mathrm{cyc}}$ spanned by cycles of length at least $r$. 
We call an element $W\in (kQ_{\mathrm{cyc}})_2$ a \emph{potential}, 
and call a pair $(Q,W)$ a \emph{quiver with potential} (= \emph{QP}). 
 
For each face $f\in Q_F$, we associate the \emph{small cycle} $\omega_f\in (kQ_{\mathrm{cyc}})_2$ obtained as the product of arrows 
around the boundary of $f$. 
For the quiver $Q$ associated with a dimer model, we define the potential $W_Q$ as 
\[
W_Q\coloneqq \sum_{f\in Q^+_F}\omega_f-\sum_{f\in Q^-_F}\omega_f. 
\]

For each face $f\in Q_F$, we choose an arrow $a\in \omega_f$ and consider $h(a)$ as the starting point of the small cycle $\omega_f$. 
Then, we may write $e_{h(a)}\omega_fe_{h(a)}\coloneqq a_1\cdots a_ra$ with some path $a_1\cdots a_r$. 
We define the partial derivative of $\omega_f$ with respect to $a$ by $\partial \omega_f/\partial a\coloneqq a_1\cdots a_r$. 
Extending this derivative linearly, we also define $\partial W_Q/\partial a$ for any $a\in Q_1$.
Then, we consider the closure of the two-sided ideal $J(W_Q)\coloneqq \langle\overline{\partial W_Q/\partial a\,|\,a\in Q_1}\rangle$. 
We define the \emph{complete Jacobian algebra} of a dimer model as 
\[
\calP(Q, W_Q)\coloneqq \widehat{kQ}/J(W_Q).
\]

We say that a node of a dimer model is \emph{bivalent} if the number of edges incident to that node is two. 
In the rest, we assume that our dimer model has no bivalent nodes. 
If there are bivalent nodes, we remove them as shown in \cite[Figure~5.1]{IU1}, because this operation does not change the Jacobian algebra up to isomorphism. 

\subsection{Consistency condition and NCCRs}
\label{sec_consist}

In this subsection, we impose the extra condition so-called ``consistency condition" on dimer models. 
Under this assumption, a dimer model gives an NCCR of a $3$-dimensional Gorenstein toric singularity (see Theorem~\ref{NCCR1}). 

We need the notion of zigzag paths to introduce the consistency condition. 

\begin{definition}
We say that a path on a dimer model $\Gamma$ is a \emph{zigzag path} if 
it makes a maximum turn to the right on a white node and a maximal turn to the left on a black node.  
\end{definition}

We also consider the lift of a zigzag path to the universal cover $\widetilde{\Gamma}$. 
(Note that a zigzag path on the universal cover is either periodic or infinite in both directions.) 
For example, zigzag paths of the dimer model given in Figure~\ref{ex_quiver4a} are shown in Figure~\ref{zz_4a}. 
By using this notion, we introduce the consistency condition. 
In the literature, there are several conditions that are equivalent to the following definition (see \cite{Boc1,IU1}). 

\begin{definition}[{see \cite[Definition~3.5]{IU1}}]
\label{def_consistent}
We say that a dimer model is \emph{consistent} if 
\begin{enumerate}[\rm(1)]
\item there is no homologically trivial zigzag path, 
\item no zigzag path on the universal cover has a self-intersection, 
\item no pair of zigzag paths on the universal cover intersect each other in the same direction more than once. 
That is, if a pair of zigzag paths $(z,w)$ on the universal cover has two intersections 
$a_1$, $a_2$ and $z$ points from $a_1$ to $a_2$, then $w$ point from $a_2$ to $a_1$. 
\end{enumerate}
Here, we remark that two zigzag paths are said to \emph{intersect} if they share an edge (not a node). 
\end{definition} 

We also introduce isoradial dimer models which are stronger than consistent ones. 

\begin{definition}[{\cite[Theorem~5.1]{KS},  see also \cite{Duf,Mer}}] 
\label{def_isoradial}
We say that a dimer model $\Gamma$ is \emph{isoradial} (or \emph{geometrically consistent}) if 
\begin{enumerate}[\rm(1)]
\item every zigzag path is a simple closed curve, 
\item any pair of zigzag paths on the universal cover share at most one edge. 
\end{enumerate}
\end{definition}

By Figure~\ref{zz_4a} below, we see that the dimer model given in Figure~\ref{ex_quiver4a} is isoradial, thus it is consistent in particular. 
In general, we can easily see that regular dimer models are isoradial. 

\medskip

\begin{figure}[H]
\begin{center}
{\scalebox{0.9}{
\begin{tikzpicture}
\node (ZZ1) at (0,0) 
{\scalebox{0.5}{
\begin{tikzpicture}
\coordinate (P1) at (1,1); \coordinate (P2) at (3,1); \coordinate (P3) at (3,3); \coordinate (P4) at (1,3);
\draw[thick]  (0,0) rectangle (4,4);
\draw[line width=0.05cm]  (P1)--(P2)--(P3)--(P4)--(P1);\draw[line width=0.05cm] (-0.5,1)--(P1)--(1,-0.5); \draw[line width=0.05cm]  (4.5,1)--(P2)--(3,-0.5);
\draw[line width=0.05cm]  (-0.5,3)--(P4)--(1,4.5);\draw[line width=0.05cm]  (3,4.5)--(P3)--(4.5,3);

\filldraw  [ultra thick, fill=black] (1,1) circle [radius=0.18] ;\filldraw  [ultra thick, fill=black] (3,3) circle [radius=0.18] ;
\draw  [ultra thick,fill=white] (3,1) circle [radius=0.18] ;\draw  [ultra thick,fill=white] (1,3) circle [radius=0.18] ;
\draw[->, line width=0.18cm, rounded corners, color=red] (-0.5,1)--(P1)--(P4)--(P3)--(3,4.5); 
\draw[->, line width=0.18cm, rounded corners, color=red] (3,-0.5)--(P2)--(4.5,1); 
\end{tikzpicture} }}; 

\node (ZZ2) at (3.3,0) 
{\scalebox{0.5}{
\begin{tikzpicture}
\coordinate (P1) at (1,1); \coordinate (P2) at (3,1); \coordinate (P3) at (3,3); \coordinate (P4) at (1,3);
\draw[thick]  (0,0) rectangle (4,4);
\draw[line width=0.05cm]  (P1)--(P2)--(P3)--(P4)--(P1);\draw[line width=0.05cm] (-0.5,1)--(P1)--(1,-0.5); \draw[line width=0.05cm]  (4.5,1)--(P2)--(3,-0.5);
\draw[line width=0.05cm]  (-0.5,3)--(P4)--(1,4.5);\draw[line width=0.05cm]  (3,4.5)--(P3)--(4.5,3);
\filldraw  [ultra thick, fill=black] (1,1) circle [radius=0.18] ;\filldraw  [ultra thick, fill=black] (3,3) circle [radius=0.18] ;
\draw  [ultra thick,fill=white] (3,1) circle [radius=0.18] ;\draw  [ultra thick,fill=white] (1,3) circle [radius=0.18] ;
\draw[->, line width=0.18cm, rounded corners, color=red] (1,-0.5)--(P1)--(-0.5,1); 
\draw[->, line width=0.18cm, rounded corners, color=red] (4.5,1)--(P2)--(P3)--(P4)--(1,4.5);
\end{tikzpicture} }} ;  

\node (ZZ3) at (6.6,0) 
{\scalebox{0.5}{
\begin{tikzpicture}
\coordinate (P1) at (1,1); \coordinate (P2) at (3,1); \coordinate (P3) at (3,3); \coordinate (P4) at (1,3);
\draw[thick]  (0,0) rectangle (4,4);
\draw[line width=0.05cm]  (P1)--(P2)--(P3)--(P4)--(P1);\draw[line width=0.05cm] (-0.5,1)--(P1)--(1,-0.5); \draw[line width=0.05cm]  (4.5,1)--(P2)--(3,-0.5);
\draw[line width=0.05cm]  (-0.5,3)--(P4)--(1,4.5);\draw[line width=0.05cm]  (3,4.5)--(P3)--(4.5,3);
\filldraw  [ultra thick, fill=black] (1,1) circle [radius=0.18] ;\filldraw  [ultra thick, fill=black] (3,3) circle [radius=0.18] ;
\draw  [ultra thick,fill=white] (3,1) circle [radius=0.18] ;\draw  [ultra thick,fill=white] (1,3) circle [radius=0.18] ;
\draw[->, line width=0.18cm, rounded corners, color=red] (1,4.5)--(P4)--(-0.5,3); 
\draw[->, line width=0.18cm, rounded corners, color=red] (4.5,3)--(P3)--(P2)--(P1)--(1,-0.5);
\end{tikzpicture} }}; 

\node (ZZ4) at (9.9,0) 
{\scalebox{0.5}{
\begin{tikzpicture}
\coordinate (P1) at (1,1); \coordinate (P2) at (3,1); \coordinate (P3) at (3,3); \coordinate (P4) at (1,3);
\draw[thick]  (0,0) rectangle (4,4);
\draw[line width=0.05cm]  (P1)--(P2)--(P3)--(P4)--(P1);\draw[line width=0.05cm] (-0.5,1)--(P1)--(1,-0.5); \draw[line width=0.05cm]  (4.5,1)--(P2)--(3,-0.5);
\draw[line width=0.05cm]  (-0.5,3)--(P4)--(1,4.5);\draw[line width=0.05cm]  (3,4.5)--(P3)--(4.5,3);
\filldraw  [ultra thick, fill=black] (1,1) circle [radius=0.18] ;\filldraw  [ultra thick, fill=black] (3,3) circle [radius=0.18] ;
\draw  [ultra thick,fill=white] (3,1) circle [radius=0.18] ;\draw  [ultra thick,fill=white] (1,3) circle [radius=0.18] ; 
\draw[->, line width=0.18cm, rounded corners, color=red] (-0.5,3)--(P4)--(P1)--(P2)--(3,-0.5) ; 
\draw[->, line width=0.18cm, rounded corners, color=red] (3,4.5)--(P3)--(4.5,3) ;
\end{tikzpicture} }} ;

\end{tikzpicture}
}}
\caption{Examples of zigzag paths}
\label{zz_4a}
\end{center}
\end{figure}
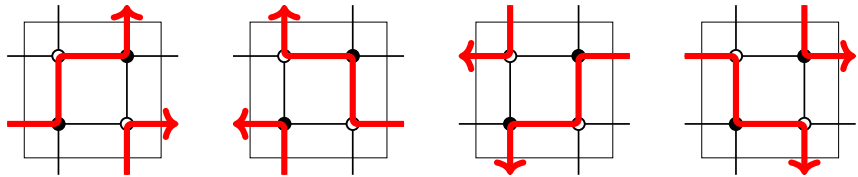

Next, we introduce the notion of perfect matchings. 
In general, every dimer model does not necessarily have a perfect matching. 
If a dimer model is consistent, then it has a perfect matching and every edge is contained in some perfect matchings (see e.g., \cite[Proposition~8.1]{IU2}). 

\begin{definition}
A \emph{perfect matching} (or \emph{dimer configuration}) on a dimer model $\Gamma$ is a subset $\sfP$ of $\Gamma_1$ such that each node is 
the end point of precisely one edge in $\sfP$. 
A \emph{perfect matching} on $\widetilde{\Gamma}$ is also defined naturally via the universal cover $\RR^2\rightarrow\sfT$.  
\end{definition}

For each edge contained in a perfect matching on $\Gamma$, we give the orientation from a white node to a black node. 
We fix a perfect matching $\sfP_0$. 
For any perfect matching $\sfP$, the difference of two perfect matchings $\sfP-\sfP_0$ forms a $1$-cycle, 
and hence we consider such a $1$-cycle as an element in the homology group $\rmH_1(\sfT)\cong\ZZ^2$. 
Then, we obtain finitely many elements in $\ZZ^2$ corresponding to perfect matchings on $\Gamma$, 
and define the lattice polygon $\Delta$ as the convex hull of them. 
We call $\Delta$ the \emph{perfect matching polygon} (or \emph{characteristic polygon}) of $\Gamma$. 
Although this lattice polygon depends on a choice of a fixed perfect matching, it is determined up to translations. 
We say that a perfect matching $\sfP$ is \emph{extremal} if the lattice point corresponding to the $1$-cycle $\sfP-\sfP_0$ lies at a vertex of $\Delta$. 
If a dimer model is consistent, then there exists a unique extremal perfect matching corresponding to a vertex of $\Delta$ (see e.g., \cite[Corollary~4.27 ]{Bro}, \cite[Proposition~9.2]{IU2}). 
Thus, we can give a cyclic order to extremal perfect matchings along the corresponding vertices of $\Delta$ in the anti-clockwise direction. 
In addition, we say that two extremal perfect matchings are \emph{adjacent} if they are adjacent with respect to a given cyclic order. 
For example, $\sfP_1,\cdots,\sfP_4$ shown in Figure~\ref{pm_4a} are extremal perfect matchings on the dimer model given in Figure~\ref{ex_quiver4a} 
corresponding to vertices $(1,0),(0,1),(-1,0),(0,-1)$ respectively, where $\sfP_0$ is a fixed perfect matching. 

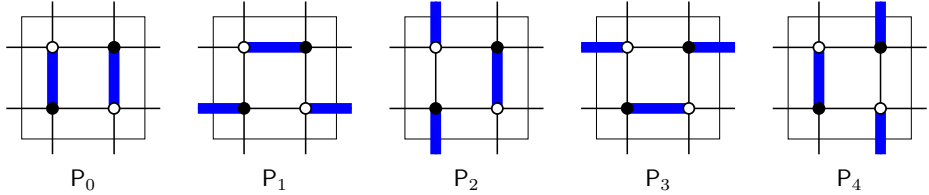
\begin{figure}[H]
\begin{center}
{\scalebox{0.9}{
\begin{tikzpicture} 
\node at (0,-1.5) {$\sfP_0$};
\node at (2.8,-1.5) {$\sfP_1$}; \node at (5.6,-1.5) {$\sfP_2$}; \node at (8.4,-1.5) {$\sfP_3$}; \node at (11.2,-1.5) {$\sfP_4$};

\node (PM0) at (0,0) 
{\scalebox{0.45}{
\begin{tikzpicture}
\node (P1) at (1,1){$$}; \node (P2) at (3,1){$$}; \node (P3) at (3,3){$$}; \node (P4) at (1,3){$$};
\draw[thick]  (0,0) rectangle (4,4);
\draw[line width=0.05cm]  (P1)--(P2)--(P3)--(P4)--(P1);\draw[line width=0.05cm] (-0.5,1)--(P1)--(1,-0.5); \draw[line width=0.05cm]  (4.5,1)--(P2)--(3,-0.5);
\draw[line width=0.05cm]  (-0.5,3)--(P4)--(1,4.5);\draw[line width=0.05cm]  (3,4.5)--(P3)--(4.5,3);
\draw[line width=0.35cm,color=blue] (P1)--(P4);\draw[line width=0.35cm,color=blue] (P2)--(P3);
\filldraw  [ultra thick, fill=black] (1,1) circle [radius=0.18] ;\filldraw  [ultra thick, fill=black] (3,3) circle [radius=0.18] ;
\draw  [ultra thick,fill=white] (3,1) circle [radius=0.18] ;\draw  [ultra thick,fill=white] (1,3) circle [radius=0.18] ;
\end{tikzpicture} }}; 

\node (PM1) at (2.8,0) 
{\scalebox{0.45}{
\begin{tikzpicture}
\node (P1) at (1,1){$$}; \node (P2) at (3,1){$$}; \node (P3) at (3,3){$$}; \node (P4) at (1,3){$$};
\draw[thick]  (0,0) rectangle (4,4);
\draw[line width=0.05cm]  (P1)--(P2)--(P3)--(P4)--(P1);\draw[line width=0.05cm] (-0.5,1)--(P1)--(1,-0.5); \draw[line width=0.05cm]  (4.5,1)--(P2)--(3,-0.5);
\draw[line width=0.05cm]  (-0.5,3)--(P4)--(1,4.5);\draw[line width=0.05cm]  (3,4.5)--(P3)--(4.5,3);
\draw[line width=0.35cm,color=blue] (P3)--(P4);\draw[line width=0.35cm,color=blue] (P1)--(-0.5,1);\draw[line width=0.35cm,color=blue] (P2)--(4.5,1);
\filldraw  [ultra thick, fill=black] (1,1) circle [radius=0.18] ;\filldraw  [ultra thick, fill=black] (3,3) circle [radius=0.18] ;
\draw  [ultra thick,fill=white] (3,1) circle [radius=0.18] ;\draw  [ultra thick,fill=white] (1,3) circle [radius=0.18] ;
\end{tikzpicture} }}; 

\node (PM2) at (5.6,0) 
{\scalebox{0.45}{
\begin{tikzpicture}
\node (P1) at (1,1){$$}; \node (P2) at (3,1){$$}; \node (P3) at (3,3){$$}; \node (P4) at (1,3){$$};
\draw[thick]  (0,0) rectangle (4,4);
\draw[line width=0.05cm]  (P1)--(P2)--(P3)--(P4)--(P1);\draw[line width=0.05cm] (-0.5,1)--(P1)--(1,-0.5); \draw[line width=0.05cm]  (4.5,1)--(P2)--(3,-0.5);
\draw[line width=0.05cm]  (-0.5,3)--(P4)--(1,4.5);\draw[line width=0.05cm]  (3,4.5)--(P3)--(4.5,3);
\draw[line width=0.35cm,color=blue] (P3)--(P2);\draw[line width=0.35cm,color=blue] (P4)--(1,4.5);\draw[line width=0.35cm,color=blue] (P1)--(1,-0.5);
\filldraw  [ultra thick, fill=black] (1,1) circle [radius=0.18] ;\filldraw  [ultra thick, fill=black] (3,3) circle [radius=0.18] ;
\draw  [ultra thick,fill=white] (3,1) circle [radius=0.18] ;\draw  [ultra thick,fill=white] (1,3) circle [radius=0.18] ;
\end{tikzpicture} }} ;  

\node (PM3) at (8.4,0) 
{\scalebox{0.45}{
\begin{tikzpicture}
\node (P1) at (1,1){$$}; \node (P2) at (3,1){$$}; \node (P3) at (3,3){$$}; \node (P4) at (1,3){$$};
\draw[thick]  (0,0) rectangle (4,4);
\draw[line width=0.05cm]  (P1)--(P2)--(P3)--(P4)--(P1);\draw[line width=0.05cm] (-0.5,1)--(P1)--(1,-0.5); \draw[line width=0.05cm]  (4.5,1)--(P2)--(3,-0.5);
\draw[line width=0.05cm]  (-0.5,3)--(P4)--(1,4.5);\draw[line width=0.05cm]  (3,4.5)--(P3)--(4.5,3);
\draw[line width=0.35cm,color=blue] (P2)--(P1);\draw[line width=0.35cm,color=blue] (P4)--(-0.5,3);\draw[line width=0.35cm,color=blue] (P3)--(4.5,3);
\filldraw  [ultra thick, fill=black] (1,1) circle [radius=0.18] ;\filldraw  [ultra thick, fill=black] (3,3) circle [radius=0.18] ;
\draw  [ultra thick,fill=white] (3,1) circle [radius=0.18] ;\draw  [ultra thick,fill=white] (1,3) circle [radius=0.18] ;
\end{tikzpicture} }}; 

\node (PM4) at (11.2,0) 
{\scalebox{0.45}{
\begin{tikzpicture}
\node (P1) at (1,1){$$}; \node (P2) at (3,1){$$}; \node (P3) at (3,3){$$}; \node (P4) at (1,3){$$};
\draw[thick]  (0,0) rectangle (4,4);
\draw[line width=0.05cm]  (P1)--(P2)--(P3)--(P4)--(P1);\draw[line width=0.05cm] (-0.5,1)--(P1)--(1,-0.5); \draw[line width=0.05cm]  (4.5,1)--(P2)--(3,-0.5);
\draw[line width=0.05cm]  (-0.5,3)--(P4)--(1,4.5);\draw[line width=0.05cm]  (3,4.5)--(P3)--(4.5,3);
\draw[line width=0.35cm,color=blue] (P4)--(P1);\draw[line width=0.35cm,color=blue] (P3)--(3,4.5);\draw[line width=0.35cm,color=blue] (P2)--(3,-0.5);
\filldraw  [ultra thick, fill=black] (1,1) circle [radius=0.18] ;\filldraw  [ultra thick, fill=black] (3,3) circle [radius=0.18] ;
\draw  [ultra thick,fill=white] (3,1) circle [radius=0.18] ;\draw  [ultra thick,fill=white] (1,3) circle [radius=0.18] ;
\end{tikzpicture} }} ;

\end{tikzpicture}
}}
\caption{Extremal perfect matchings}
\label{pm_4a}
\end{center}
\end{figure}

Then, we discuss a relationship between the perfect matching polygon and zigzag paths. 
Since we can consider a zigzag path $z$ as a $1$-cycle on $\sfT$, 
it determines the homology class $[z]\in\rmH_1(\sfT)\cong\ZZ^2$. We call this element $[z]\in\ZZ^2$ the \emph{slope} of $z$. 
If a dimer model is consistent, a zigzag path does not have a self-intersection, and hence the slope of each zigzag path is a primitive element. 
Then, we have the following correspondence.  

\begin{proposition}[{see e.g., \cite[Section~9]{IU2},\cite[Corollary~2.9]{Boc3}}] 
\label{zigzag_sidepolygon}
There exists a one to one correspondence between the set of slopes of zigzag paths on a consistent dimer model and 
the set of primitive side segments of the perfect matching polygon. 
Precisely, let $v,v^\prime\in\ZZ^2$ be end points of a primitive side segment, then there exists a zigzag path whose slope 
coincides with $v-v^\prime$. 

Moreover, zigzag paths having the same slope arise as the difference of two extremal perfect matchings that are adjacent.
\end{proposition}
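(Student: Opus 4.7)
The plan is to establish the correspondence in two directions by analyzing the symmetric difference $\sfP \triangle \sfP'$ of two perfect matchings, which in a bipartite graph is always a disjoint union of oriented alternating cycles (every node has degree $0$ or $2$ in it). For an alternating cycle $C \subset \sfP \triangle \sfP'$, orienting edges of $\sfP$ from white to black and edges of $\sfP'$ from black to white yields an oriented cycle on $\sfT$, hence a homology class $[C]\in\rmH_1(\sfT)\cong\ZZ^2$, and the equality $[\sfP - \sfP'] = \sum_C [C]$ holds in $\rmH_1(\sfT)$.

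The main step is to show that for adjacent extremal perfect matchings $\sfP, \sfP'$ corresponding to adjacent vertices $v, v'$ of $\Delta$, \emph{every} alternating cycle in $\sfP\triangle \sfP'$ is a zigzag path whose slope is a positive primitive multiple parallel to $v-v'$. First I would check that any alternating cycle in $\sfP \triangle \sfP'$ is a zigzag path: if some alternating cycle failed the zigzag turning rule at a node, one could locally swap edges to obtain new perfect matchings $\sfQ, \sfQ'$ with $[\sfQ - \sfP_0]$ and $[\sfQ' - \sfP_0]$ lying strictly between $v$ and $v'$ in $\Delta$ and \emph{off} the segment $vv'$, contradicting that $vv'$ is an edge of the convex polygon $\Delta$. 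Hence each component of $\sfP \triangle \sfP'$ is a zigzag path. Further, since distinct slopes of zigzag paths are distinct primitive vectors (consistency forces no self-intersection and no anti-parallel reintersection), and since the total homology is $v - v' - (v - v') = $ a single primitive vector times an integer along the $vv'$ direction, all zigzag components must share a common slope equal to the primitive vector along $vv'$.

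Conversely, I would show that every zigzag path arises this way. Given a zigzag path $z$, the consistency condition (specifically, that no two zigzag paths intersect twice in the same direction) allows one to split $\Gamma_1$ into two perfect matchings $\sfP_+$ and $\sfP_-$ that differ precisely along $z$ (plus possibly other parallel zigzag paths of the same slope): one builds $\sfP_+$ by picking the edges incident to black nodes of $z$ on one side, and $\sfP_-$ by picking those on the other, and extends to a perfect matching using that every edge of a consistent dimer model is contained in some perfect matching. Then $[\sfP_+ - \sfP_-]$ is an integer multiple of the slope $[z]$; by choosing $\sfP_+,\sfP_-$ to be the extremal perfect matchings realizing the maximum and minimum of the linear functional dual to $[z]^\perp$, one obtains that the corresponding vertices $v,v'$ of $\Delta$ satisfy $v - v' \in \ZZ_{>0} \cdot [z]$, so $[z]$ indeed lies along a primitive side segment of $\Delta$. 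The resulting assignment ``slope of $z$ $\mapsto$ primitive side segment'' is then shown to be bijective by combining the count of sides with the count of slope-classes of zigzag paths.

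The hard part will be the rigidity argument in Step 2: verifying that no alternating cycle in $\sfP \triangle \sfP'$ fails the zigzag turning rule. This uses the \emph{full} strength of Definition~\ref{def_consistent}; in particular, a failure of the zigzag condition at a node provides a local move producing a new perfect matching whose class in $\rmH_1(\sfT)$ lies outside the segment $vv'$, and the absence of such moves is exactly what characterizes adjacency of extremal perfect matchings. Making this local-to-global argument precise, together with ruling out homologically trivial zigzag components via condition (1) of Definition~\ref{def_consistent}, is the technical heart of the proposition.
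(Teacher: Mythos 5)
This proposition is not proved in the paper: it is cited as a known result with references to Ishii--Ueda \cite{IU2} and Bocklandt \cite{Boc3}, so there is no proof in the paper for your attempt to be compared against. That said, your symmetric-difference strategy is essentially the one used in those references (and in Broomhead's memoir \cite{Bro}), so the overall plan is sound. The homology/convexity argument you sketch correctly shows that every component $C$ of $\sfP\triangle\sfP'$ for adjacent extremal $\sfP,\sfP'$ has $[C]$ parallel to $v-v'$: cycle surgery replaces $(\sfP,\sfP')$ by matchings whose classes shift by $\mp[C]$, and if $[C]$ had a component in the inward normal direction of the side $vv'$, one of the two would land outside $\Delta$.

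Where the sketch has a real gap is in upgrading ``cycle with the right slope'' to ``zigzag path,'' and in the local move you invoke. A failure of the zigzag turning rule at a single node does not, by itself, produce a new perfect matching: a local swap at one vertex breaks the matching property at neighboring vertices, so the only clean surgery available is along an entire alternating cycle, which controls homology but says nothing about the local turning behavior of $C$. The convexity argument therefore pins down $[C]$ but cannot, on its own, show $C$ is a zigzag path; that step genuinely needs more structure (e.g., Broomhead's zigzag flows / half-plane cone description of $\Delta$, or the properly-ordered reformulation via Gulotta). Similarly, in the converse direction the claim that one can ``pick edges on one side of $z$ and extend to a perfect matching'' is not automatic and is itself a nontrivial consequence of consistency. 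You also need to rule out homologically trivial components of $\sfP\triangle\sfP'$ separately, since such a component does not violate convexity; this again uses consistency (Definition~\ref{def_consistent}(1)) in an essential way. None of these issues is fatal to the strategy, but as written the ``technical heart'' you identify is left open rather than resolved, and the specific local-swap mechanism you propose would fail.
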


Furthermore, by this correspondence, we can also give a cyclic order to the set of slopes of zigzag paths. 
Thus, we say that a pair of zigzag paths have \emph{adjacent slopes} if their slopes are adjacent with respect to a given cyclic order. 
This cyclic order is essential in the definition of  properly ordered dimer models written below. 
It is known that a dimer model is properly ordered if and only if it is consistent (see \cite[Proposition~4.4]{IU1}). 

\begin{definition}[{see \cite[Section~3.1]{Gul}}] 
\label{def_properly}
We say that a dimer model is \emph{properly ordered} if 
\begin{enumerate}[\rm(1)]
\item there is no homologically trivial zigzag path, 
\item no zigzag path on the universal cover has a self-intersection, 
\item no pair of zigzag paths with the same slope have a common node, 
\item for any node on the dimer model, the natural cyclic order on the set of zigzag paths touching that node 
coincides with the cyclic order determined by their slopes.
\end{enumerate}
\end{definition} 

We can also characterize isoradial dimer models in terms of slopes of zigzag paths. 

\begin{proposition}[{see \cite[Propostion~3.12]{Bro}}]
\label{slope_isoradial}
A dimer model is isoradial if and only if the following conditions hold.
\begin{enumerate}[\rm(1)]
\item No zigzag path on the universal cover has a self-intersection,
\item Let $z$ and $z^\prime$ be zigzag paths on the universal cover. If $[z], [z^\prime]\in\rmH_1(\sfT)$ are linearly independent, 
then they intersect in precisely one arrow. 
\item Let $z$ and $z^\prime$ be zigzag paths on the universal cover. If $[z], [z^\prime]\in\rmH_1(\sfT)$ are linearly dependent, 
then they do not intersect. 
\end{enumerate}
\end{proposition}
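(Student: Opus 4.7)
The plan is to verify the equivalence of Definition~\ref{def_isoradial} with the three conditions of the proposition by matching them piece by piece. Since the universal covering map $\RR^2\to\sfT$ is a local homeomorphism, a zigzag path $Z$ on $\sfT$ is a simple closed curve precisely when each lift of $Z$ to $\RR^2$ is a properly embedded, non-self-intersecting path. This identifies condition (1) of Definition~\ref{def_isoradial} with condition (1) of the proposition.

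For the remaining equivalence, I would fix two zigzag paths $z, z'$ on the universal cover and split into cases according to the slopes $[z], [z']\in\ZZ^2$. When $[z]$ and $[z']$ are linearly independent, the lifted paths have distinct asymptotic directions, so, viewed as properly embedded arcs in $\RR^2$, they must cross topologically. The key technical claim is that in the dimer model such a crossing is realized by at least one shared \emph{edge} (not merely a shared node): at a white (resp.\ black) node every zigzag path makes the maximum right (resp.\ left) turn, and a local analysis near a putative ``node-only'' crossing, using the bipartite structure together with the consistency that is implied by isoradiality, forces the two paths to share an incident edge. Combined with Definition~\ref{def_isoradial}(2) (at most one shared edge), we conclude that $z$ and $z'$ share exactly one edge, giving (2) of the proposition. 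When $[z]$ and $[z']$ are linearly dependent, the lifts are asymptotically parallel; if they shared an edge, then the deterministic propagation rule for zigzag paths (an edge together with a direction of travel uniquely extends to a full zigzag path) would force coincidence on an infinite stretch, contradicting Definition~\ref{def_isoradial}(2). This yields condition (3).

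Conversely, assuming the three conditions of the proposition, condition (1) of Definition~\ref{def_isoradial} is immediate, and condition (2) follows since any two zigzag paths on the universal cover either have linearly independent slopes (sharing exactly one edge by (2)) or linearly dependent slopes (sharing none by (3)); in both situations they share at most one edge.

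The hard part will be the technical claim in the linearly independent case, namely upgrading a topological crossing of two zigzag paths in $\RR^2$ from a set-theoretic intersection to a shared edge. I plan to carry this out by a local combinatorial analysis near each node based on the maximum-turn rule. Alternatively, one may invoke the standard rhombus embedding of an isoradial dimer model, in which each edge corresponds to a rhombus and each zigzag path corresponds to a ``train track'' of such rhombi; two train tracks with linearly independent asymptotic directions must cross a common rhombus, which is exactly a shared edge. The secondary subtlety is ruling out coincidence phenomena in the linearly dependent case, which I expect to control using the periodicity of zigzag paths on the torus together with the ``at most one edge'' clause of isoradiality.
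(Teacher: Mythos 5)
The paper does not prove this proposition---it is quoted as \cite[Proposition~3.12]{Bro} with no argument---so there is no internal proof to compare against, and your outline must be evaluated on its own terms. Your piece-by-piece matching of Definition~\ref{def_isoradial}(1) with condition (1) of the proposition is not actually an equivalence: a zigzag path $Z$ on $\sfT$ can fail to be a simple closed curve even when each individual lift of $Z$ is embedded in $\RR^2$, because two \emph{distinct} lifts of the same $Z$ (whose slopes are equal, hence linearly dependent) may meet. Thus conditions (1) and (3) of the proposition jointly carry the burden of Definition~\ref{def_isoradial}(1), and the clean bijection you start from does not hold.

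The more serious gaps are the two you leave open. Upgrading a topological crossing of two lifted zigzag paths to a shared \emph{edge} rather than a mere shared node is exactly where the max-turn rule does real work: at a node $v$, any zigzag path through $v$ uses a cyclically consecutive pair of edges incident to $v$, and two distinct consecutive pairs around $v$ either share an edge or do not interleave, so a genuine topological crossing at $v$ forces a shared edge. You flag this as ``the hard part'' and then fall back on the rhombic-embedding picture, but Definition~\ref{def_isoradial} is purely combinatorial and the rhombic-embedding characterization is itself the content of the cited Kenyon--Schlenker theorem, so appealing to it risks circularity. Finally, in the linearly dependent case your deterministic-propagation argument only handles two paths that traverse a shared edge in the \emph{same} direction; if $[z]=-[z']$ they would traverse the shared edge in opposite directions and diverge immediately on both sides of it, which produces no contradiction with ``at most one shared edge.'' Ruling this case out needs a separate argument, for instance via the cyclic ordering of slopes at each node in a properly ordered dimer model.
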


\medskip

In the rest of this subsection, we present a construction of modules giving NCCRs of $3$-dimensional Gorenstein toric singularities. 

By the dual point of view, we consider a perfect matching as a function on $Q_1$. 
Namely, for each arrow $a\in Q_1$ and each perfect matching $\sfP$, we define the perfect matching function: 
\begin{eqnarray}
\label{pm_function}
\sfP(a)=\left \{\begin{array}{ll}
1&\text{if the edge corresponding to $a$ is in $\sfP$}\\
0&\text{otherwise}. \\
\end{array} \right. 
\end{eqnarray}
When we consider the oppositely directed arrow $a^*\in Q^{\rm op}$ for $a\in Q_1$, we define $\sfP(a^*)=-\sfP(a)$. 

\medskip

Let $\Gamma$ be a consistent dimer model, whose perfect matching polygon is $\Delta$. 
We consider the $3$-dimensional Gorenstein toric singularity $R$ associated with $\Delta$. 
That is, the toric diagram of $R$ is the perfect matching polygon $\Delta$. 
Let $\sfP_1, \cdots, \sfP_n$ be the extremal perfect matchings on $\Gamma$ ordered cyclically.  
For $i, j\in Q_0$, let $a_{ij}$ be a path from $i$ to $j$ (i.e., $h(a_{ij})=j$ and $t(a_{ij})=i$). 
We define the divisorial ideal of $R$ associated with $a_{ij}$ as 
\[
T_{a_{ij}}\coloneqq T(\sfP_1(a_{ij}), \cdots, \sfP_n(a_{ij})). 
\]
This ideal depends on only the starting point $i$ and the ending point $j$, whereas a path is not unique. 
Namely, let $a_{ij}, b_{ij}$ be paths from $i$ to $j$, then we have that $T_{a_{ij}}\cong T_{b_{ij}}$ (see e.g., \cite[Lemma~3.7]{Nak}). 
Thus, we simply denote it by $T_{ij}$. 
Using this divisorial ideal, we obtain an NCCR of $R$ as follows. 

\begin{theorem}[{see e.g., \cite{Bro,IU2,Boc2}}]
\label{NCCR1}
Suppose that $(Q, W_Q)$ is the QP associated with a consistent dimer model $\Gamma$ and $\calP(Q, W_Q)$ is the complete Jacobian algebra. 
Let $R\coloneqq \rmZ(\calP(Q, W_Q))$ be the center of $\calP(Q, W_Q)$. 
Then, $R$ is a $3$-dimensional complete local Gorenstein toric singularity, whose toric diagram coincides with the perfect matching polygon of $\Gamma$. 
Furthermore, we have that 
\[
\calP(Q, W_Q)\cong\End_R(\bigoplus_{j\in Q_0}T_{ij}), 
\]
for each vertex $i\in Q_0$ and this is a splitting NCCR of $R$. 
\end{theorem}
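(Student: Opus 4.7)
The plan is to follow the by-now-standard strategy of Broomhead, Ishii-Ueda, and Bocklandt, splitting the argument into three pieces: (i) identifying the center of $\calP(Q,W_Q)$ with the toric ring associated to the perfect matching polygon $\Delta$; (ii) identifying each subspace $e_i\calP(Q,W_Q)e_j$ with the divisorial ideal $T_{ij}$; and (iii) verifying that the resulting endomorphism algebra is a non-singular $R$-order. The combinatorial backbone is the \emph{perfect matching parametrization of paths}: extending each perfect matching linearly to the double quiver via \eqref{pm_function}, one checks that the Jacobian relations identify precisely those paths which agree on every perfect matching, so paths modulo $J(W_Q)$ are parametrized by lattice points in the cone $\sigma_\Delta^\vee$.

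First, I would use this parametrization to compute $e_i\calP(Q,W_Q)e_i$ for each $i\in Q_0$: cycles at $i$ correspond bijectively to $\sigma_\Delta^\vee\cap\ZZ^3$, yielding an isomorphism $e_i\calP(Q,W_Q)e_i\cong R$ with $R=k[[\sigma_\Delta^\vee\cap\ZZ^3]]$. Consistency (through Proposition~\ref{zigzag_sidepolygon} and the uniqueness of extremal perfect matchings) ensures that this isomorphism is independent of $i$ and that the diagonal image of $R$ accounts for the whole center, so $\rmZ(\calP(Q,W_Q))\cong R$ is a $3$-dimensional complete local Gorenstein toric singularity with toric diagram $\Delta$ as in Subsection~\ref{toric_pre}. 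Next I would show, by reading off the perfect matching weights along any path from $i$ to $j$, that $e_i\calP(Q,W_Q)e_j\cong T_{ij}$ as $R$-modules. Fixing $i$ and assembling, we get
\[
\calP(Q,W_Q)=\bigoplus_{j,k\in Q_0}e_j\calP(Q,W_Q)e_k\cong\bigoplus_{j,k}\Hom_R(T_{ij},T_{ik})\cong\End_R\Big(\bigoplus_{j\in Q_0}T_{ij}\Big),
\]
where the middle isomorphism uses the identification $T_{jk}\cong\Hom_R(T_{ij},T_{ik})$ coming from the additive structure $[T_{ik}]-[T_{ij}]=[T_{jk}]$ in $\Cl(R)$.

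The module $M\coloneqq\bigoplus_{j\in Q_0}T_{ij}$ is a direct sum of rank-one reflexive modules, hence splitting; since divisorial ideals over a $3$-dimensional Gorenstein toric ring are maximal Cohen-Macaulay, $M\in\CM R$ and therefore $\End_R(M)$ is an $R$-order. The remaining and essential point is to verify $\gldim\calP(Q,W_Q)_\fkp=\dim R_\fkp$ for every $\fkp\in\Spec R$, which I would establish by constructing for each vertex $i$ an explicit projective resolution of the simple $\calP(Q,W_Q)$-module $S_i$ of the form
\[
0\rightarrow P_i\rightarrow\bigoplus_{a:\,h(a)=i}P_{t(a)}\rightarrow\bigoplus_{a:\,t(a)=i}P_{h(a)}\rightarrow P_i\rightarrow S_i\rightarrow 0,
\]
whose differentials are given by multiplication by arrows and their Jacobian derivatives. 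The main obstacle is proving exactness of this complex. The consistency conditions of Definition~\ref{def_consistent} are used in an essential way: absence of homologically trivial zigzag paths rules out spurious kernels, while the constraint on how pairs of zigzag paths intersect on the universal cover guarantees that the relations produced by the potential have exactly the right rank. This delicate zigzag analysis is the technical core of the theorem and is the reason one must restrict from general dimer models to consistent ones.
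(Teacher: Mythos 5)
The paper does not actually supply a proof of this theorem: it is stated with the citation ``(see e.g., \cite{Bro,IU2,Boc2})'' and left to the literature. So there is no in-paper argument to compare against; the relevant question is whether your outline faithfully reflects the strategy of Broomhead, Ishii--Ueda, and Bocklandt, and it largely does: the three-step scheme (perfect-matching parametrization of paths, identification of $e_i\calP e_j$ with divisorial ideals $T_{ij}$, and verification of the non-singular order condition via the Calabi--Yau-type projective resolution of the simples) is exactly the standard architecture of those proofs, and you correctly identify the exactness of the resolution of $S_i$ as the technical core where consistency enters.

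One caveat worth flagging: your opening sentence suggests that the Jacobian relations ``identify precisely those paths which agree on every perfect matching'' more or less formally. This cancellation / path-consistency property is not a formal consequence of the relations; it is itself one of the deep outputs of consistency (Broomhead proves it via algebraic consistency, Ishii--Ueda via the existence of a consistent $R$-charge), and without it the whole parametrization of $e_i\calP e_j$ by lattice points, hence the identification $\rmZ(\calP)\cong R$ and $e_i\calP e_j\cong T_{ij}$, collapses. You do invoke consistency later (for the exactness of the resolution), but it is doing essential work already at this first step. A second smaller point: before the projective resolution can establish non-singularity as an order, one also needs $\calP$ to be module-finite over $R$ and $M=\bigoplus_j T_{ij}$ to be MCM, both of which again ride on the path-consistency picture; you assert these but the derivation is not immediate. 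With those two clarifications the outline is a fair summary of the cited proofs.
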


\begin{remark}
\label{rem_NCCR}
Here, we give a few more remarks on Theorem~\ref{NCCR1}: 
\begin{enumerate}[\rm(a)]
\item Since $T^i\coloneqq\bigoplus_{j\in Q_0}T_{ij}$ contains $R\cong T_{ii}$ as a direct summand for any fixed vertex $i\in Q_0$, 
we have that $T_{ij}\in\CM R$ for any $i, j \in Q_0$. 
Furthermore, we see that $T^i$ is basic (i.e., $T_{ij}$'s are mutually non-isomorphic). 

\item An isomorphism in Theorem~\ref{NCCR1} can be established by sending each arrow $j\rightarrow k$ in $Q$ to 
an irreducible morphism $T_{ij}\rightarrow T_{ik}$ in $\End_R(T^i)$. 
Here, we say that a morphism $T_{ij}\rightarrow T_{ik}$ is \emph{irreducible} in $\End_R(T^i)$ 
if it does not factor through $T_{i\ell}$ with $\ell\neq j,k$. 
Evidently, irreducible morphisms from $T_{ij}$ to $T_{ik}$ generate $\Hom_R(T_{ij},T_{ik})$ as an $R$-module. 

\item Let $e_i$ be the idempotent corresponding to $i\in Q_0$. Then, $$T^i\cong\Hom_R(T_{ii},\bigoplus_{j\in Q_0}T_{ij})\cong e_i\calP(Q, W_Q).$$
Furthermore, since $T_{ij}^* \cong T_{ji}$, we have that 
\[
\calP(Q, W_Q)\cong\End_R(T^i)\cong\End_R((T^i)^*)\cong\calP(Q^{\rm op}, W_{Q^{\rm op}}). 
\]
\end{enumerate}
\end{remark}

In this manner, we obtain a $3$-dimensional complete local Gorenstein toric singularity $R$ and its splitting NCCR from a consistent dimer model. 
On the other hand, for every $3$-dimensional Gorenstein toric singularity $R$ associated with $\Delta$,  
there exists a consistent dimer model whose perfect matching polygon coincides with $\Delta$ (see \cite{Gul,IU2}). 
Thus, by combining these results, we have the following corollary. 
We remark that a consistent dimer model giving an NCCR of $R$ is not unique in general. 

\begin{corollary}
Every $3$-dimensional Gorenstein toric singularity admits a splitting NCCR which is constructed from a consistent dimer model. 
\end{corollary}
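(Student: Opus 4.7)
The plan is to prove the corollary by directly combining Theorem~\ref{NCCR1} with the existence result of consistent dimer models cited right before the corollary (from \cite{Gul,IU2}). The statement is essentially a two-step assembly, so the proof will be short.

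First, I would start with an arbitrary $3$-dimensional complete local Gorenstein toric singularity $R$. By the discussion in subsection~\ref{toric_pre}, $R$ corresponds (up to affine transformation) to a lattice polygon $\Delta \subset \RR^2$, its toric diagram, which determines the defining cone $\sigma_\Delta \subset \RR^3$. Next, I would invoke the result of Gulotta \cite{Gul} (see also \cite{IU2}) to obtain a consistent dimer model $\Gamma$ whose perfect matching polygon equals $\Delta$ (up to affine transformation).

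Then I would apply Theorem~\ref{NCCR1} to $\Gamma$: the center of the complete Jacobian algebra $\calP(Q_\Gamma, W_{Q_\Gamma})$ is a $3$-dimensional complete local Gorenstein toric singularity whose toric diagram coincides with the perfect matching polygon of $\Gamma$, which by construction is $\Delta$. Since a $3$-dimensional complete local Gorenstein toric singularity is determined up to isomorphism by its toric diagram (up to affine transformation), this center is isomorphic to $R$. Fixing any vertex $i \in (Q_\Gamma)_0$, Theorem~\ref{NCCR1} then yields the isomorphism $\calP(Q_\Gamma, W_{Q_\Gamma}) \cong \End_R\bigl(\bigoplus_{j \in (Q_\Gamma)_0} T_{ij}\bigr)$, and the right-hand side is a splitting NCCR of $R$ because each $T_{ij}$ is a rank one reflexive (divisorial) ideal by its defining construction in subsection~\ref{sec_consist}.

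There is no real obstacle here; the work has already been done in Theorem~\ref{NCCR1} and in \cite{Gul,IU2}. The only point requiring mild care is the implicit uniqueness statement that two $3$-dimensional complete local Gorenstein toric singularities with the same toric diagram (up to affine equivalence) are isomorphic, which is a standard fact from toric geometry recalled in subsection~\ref{toric_pre}. Everything else is a direct citation.
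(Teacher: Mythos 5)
Your argument is exactly the paper's: combine the existence of a consistent dimer model with prescribed perfect matching polygon (Gulotta, Ishii--Ueda) with Theorem~\ref{NCCR1}, using that the toric diagram determines the $3$-dimensional complete local Gorenstein toric singularity up to isomorphism. The extra care you take in spelling out the matching of toric diagrams is correct and matches the intent of the paper's brief remark preceding the corollary.
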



\section{\bf Semi-steady NCCRs arising from dimer models}
\label{sec_semisteady_dimer}

In the previous section, we saw that every $3$-dimensional complete local Gorenstein toric singularity admits NCCRs.  
In this section, we study splitting NCCRs arising from consistent dimer models that are semi-steady, 
and discuss a relationship with regular dimer models. 

First, we note a basic property of semi-steady NCCRs arising from consistent dimer models. 

\begin{lemma}
\label{lem_semi}
Let $R$ be a $3$-dimensional complete local Gorenstein toric singularity. 
If a consistent dimer model $\Gamma$ gives a semi-steady NCCR of $R$, 
then there exists a generator $M$ such that $\End_R(M)\cong\calP(Q_\Gamma,W_{Q_\Gamma})$ 
and $e_i\calP(Q_\Gamma,W_{Q_\Gamma})\cong M$ or $M^*$ for any $i\in Q_0$. 
In particular, for all $i\in Q_0$, $e_i\calP(Q_\Gamma,W_{Q_\Gamma})$ gives a semi-steady NCCR of $R$. 
\end{lemma}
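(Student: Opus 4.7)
The plan is to take $M$ to be a basic reflexive $R$-module witnessing that $\calP(Q_\Gamma,W_{Q_\Gamma})$ is a semi-steady NCCR of $R$: by definition such an $M$ is a generator, satisfies $\End_R(M)\cong\calP(Q_\Gamma,W_{Q_\Gamma})$, and has $\Hom_R(M_\ell,M)\in\add_RM$ or $\add_RM^*$ for every indecomposable summand $M_\ell$. The content of the lemma is then to promote these additive-closure memberships to honest isomorphisms, and the whole argument should hinge on a rank/summand count together with Krull--Schmidt.

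First I would show that $M$ is necessarily splitting. Theorem~\ref{NCCR1} gives $\calP(Q_\Gamma,W_{Q_\Gamma})\cong\End_R(T^i)$ with $T^i=\bigoplus_{j\in Q_0}T_{ij}$ a basic sum of $|Q_0|$ rank-one reflexives, so $(\rank_RM)^2=\rank_R\End_R(M)=|Q_0|^2$, giving $\rank_RM=|Q_0|$. On the other hand, since $M$ is basic, its number of indecomposable summands equals the number of isomorphism classes of primitive idempotents of $\End_R(M)\cong\calP(Q_\Gamma,W_{Q_\Gamma})$, which is $|Q_0|$. Thus $M$ has $|Q_0|=n+1$ indecomposable summands of positive rank summing to $|Q_0|$, forcing each summand to be of rank one. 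Next, under the algebra isomorphism the primitive idempotent $\varepsilon_\ell\in\End_R(M)$ projecting onto $M_\ell$ corresponds to some $e_{\sigma(\ell)}\in\calP(Q_\Gamma,W_{Q_\Gamma})$, where $\sigma:\{0,\dots,n\}\to Q_0$ is a bijection, and as left $R$-modules
\[
e_{\sigma(\ell)}\calP(Q_\Gamma,W_{Q_\Gamma})\;\cong\;\varepsilon_\ell\End_R(M)\;\cong\;\Hom_R(M_\ell,M).
\]

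To close the argument I would observe that $\Hom_R(M_\ell,M)=\bigoplus_{j=0}^{n}\Hom_R(M_\ell,M_j)$ is a direct sum of $n+1$ rank-one reflexives whose classes in $\Cl(R)$ are the $n+1$ distinct elements $[M_j]-[M_\ell]$; in particular $\Hom_R(M_\ell,M)$ is itself basic with exactly $n+1$ indecomposable summands. Since $M$ and $M^*$ are also basic with $n+1$ indecomposable summands, the semi-steadiness condition $\Hom_R(M_\ell,M)\in\add_RM$ or $\add_RM^*$ then forces an actual isomorphism $\Hom_R(M_\ell,M)\cong M$ or $M^*$ by a straight cardinality comparison. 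Combined with the identification of the previous paragraph this yields $e_i\calP(Q_\Gamma,W_{Q_\Gamma})\cong M$ or $M^*$ for every $i\in Q_0$, and the "in particular" assertion follows because $M^*$ is again a semi-steady generator by Lemma~\ref{basic_lem}(b). The only mild point of care will be the bookkeeping that identifies $e_{\sigma(\ell)}\calP(Q_\Gamma,W_{Q_\Gamma})$ with $\Hom_R(M_\ell,M)$ as $R$-modules under the algebra isomorphism, but this is a routine consequence of $M$ being a generator.
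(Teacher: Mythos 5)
Your proof is correct, and it closes the key step by a different (and more self-contained) mechanism than the paper does. The paper takes $M$ to be the module $T^i$ produced by Theorem~\ref{NCCR1}, so that $M$ is automatically a basic splitting MM generator, and then passes from the membership $\Hom_R(M_i,M)\in\add_RM$ or $\add_RM^*$ to an actual isomorphism by invoking the maximality of modules giving NCCRs (\cite[Proposition~4.5]{IW2}): since $\Hom_R(M_i,M)$ again gives an NCCR and lies in $\add_RM$ (resp.\ $\add_RM^*$), maximality forces $\add_R\Hom_R(M_i,M)=\add_RM$ (resp.\ $\add_RM^*$), whence the isomorphism because all modules in sight are basic. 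You instead (i) deduce that $M$ is splitting from the rank identity $\rank_R\End_R(M)=(\rank_RM)^2=|Q_0|^2$ together with the fact that $\calP(Q_\Gamma,W_{Q_\Gamma})$ has exactly $|Q_0|$ non-conjugate primitive idempotents, and (ii) replace the maximality citation by an elementary Krull--Schmidt count: $\Hom_R(M_\ell,M)$, $M$, and $M^*$ are all basic direct sums of exactly $n+1$ pairwise non-isomorphic rank-one reflexives (the distinctness of $[M_j]-[M_\ell]$ coming from $M$ being basic and splitting, and rank-one reflexives over a normal domain being determined by their class), so a containment in $\add$ forces an isomorphism. Both routes are valid; yours avoids the external reference to~\cite{IW2} at the cost of the small extra argument that $M$ must be splitting. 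Two points worth making explicit if you were to write this up: that $\Hom_R(M_\ell,M_j)$ is rank-one reflexive (this uses that $R$ is a normal domain and $M_\ell,M_j$ are rank-one reflexive, cf.\ Lemma~\ref{basic_lem_ref}), and that when transporting the primitive idempotent $\varepsilon_\ell$ across the algebra isomorphism one should speak of conjugacy classes of idempotents rather than equality, which is harmless since conjugate idempotents produce isomorphic one-sided ideals. The final sentence reducing the ``in particular'' to Lemma~\ref{basic_lem}(b) agrees with the paper.
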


\begin{proof}
By Theorem~\ref{NCCR1}, we have a basic splitting generator $M$ such that $\calP(Q_\Gamma,W_{Q_\Gamma})\cong\End_R(M)$, 
and there exists a one-to-one correspondence between direct summands in $M$ and vertices in $Q_\Gamma$. 
Thus, we may write $M=\bigoplus_{i\in(Q_\Gamma)_0}M_i$. 
Then, for each idempotent $e_i$ corresponding to a vertex $i\in(Q_\Gamma)_0$, 
we have that $e_i\calP(Q_\Gamma,W_{Q_\Gamma})\cong\Hom_R(M_i,M)$. 
By the definition of semi-steady module, we have that $\Hom_R(M_i,M)\in\add_RM$ or $\add_RM^*$ for any $i$. 
Since $M$ is basic, we have the assertion by the maximality of modules giving NCCRs (see \cite[Proposition~4.5]{IW2}). 
The last assertion follows from Lemma~\ref{basic_lem}(b). 
\end{proof}

Now, we state the main theorem in this paper. 

\begin{theorem}
\label{main_thm}
Let $R$ be a $3$-dimensional complete local Gorenstein toric singularity, $\Gamma_1,\cdots,\Gamma_n$ be consistent dimer models associated with $R$. 
Then, the following conditions are equivalent. 
\begin{enumerate}[\rm (1)]
\item $R$ is a toric singularity associated with a parallelogram (i.e., the toric diagram of $R$ is a parallelogram). 
\item There exists a consistent dimer model $\Gamma_i$ that is homotopy equivalent to a square dimer model. 
\item There exists an isoradial dimer model $\Gamma_i$ giving a semi-steady NCCR of $R$ that is not steady. 
\end{enumerate}
When this is the case, an isoradial dimer model $\Gamma$ gives a semi-steady NCCR of $R$ that is not steady if and only if 
$\Gamma$ is homotopy equivalent to a square dimer model. 
\end{theorem}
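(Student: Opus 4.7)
The plan is to prove the cyclic chain $(1) \Rightarrow (2) \Rightarrow (3) \Rightarrow (1)$, with the ``moreover'' statement arising as a byproduct of the $(3) \Rightarrow (1)$ argument.

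For $(1) \Rightarrow (2)$, given a parallelogram toric diagram $\Delta$ whose primitive side segments point in exactly two linearly independent directions (up to sign), I would use the existence result for consistent dimer models \cite{Gul,IU2}, specialized so that the zigzag paths on the universal cover are parallel to those two directions. By Proposition~\ref{zigzag_sidepolygon} the resulting dimer model has $\Delta$ as its perfect matching polygon. Since each edge of the dimer model is an intersection of two zigzag paths from the only two available slope families, the faces are quadrilaterals, and so the dimer model is homotopy equivalent to a square dimer model.

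For $(2) \Rightarrow (3)$, a square dimer model is regular and hence isoradial by the remark after Definition~\ref{def_isoradial}. For semi-steadiness, use Theorem~\ref{NCCR1} to write $\calP(Q, W_Q) \cong \End_R(T^{i_0})$ with $T^{i_0} = \bigoplus_{j \in Q_0} T_{i_0 j}$, so that $\Hom_R(T_{i_0 j}, T^{i_0}) \cong T^j$, whose summands carry classes $\calM - [T_{i_0 j}]$ for $\calM := \{[T_{i_0 k}] : k \in Q_0\} \subset \Cl(R)$. A square dimer model admits a natural $2$-coloring of $Q_0$ coming from a height function on its universal cover; this coloring induces a splitting $\calM = A \sqcup (\xi + A) \subset \ZZ \times A = \Cl(R)$ for a torsion-free generator $\xi$, matching the structure predicted by Theorem~\ref{class_semi_steady}. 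One verifies that if $j$ and $i_0$ have the same color, then $[T_{i_0 j}] \in A$ and $\calM - [T_{i_0 j}] = \calM$, so $T^j \cong T^{i_0}$; if they have opposite colors, then $[T_{i_0 j}] \in \xi + A$ and $\calM - [T_{i_0 j}] = -\calM$, so $T^j \cong (T^{i_0})^*$. Both colors occur, so $\calM \ne -\calM$, and by Lemma~\ref{semisteady_iff_steady} the NCCR is semi-steady but not steady.

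For $(3) \Rightarrow (1)$, suppose $\Gamma$ is isoradial and gives a semi-steady non-steady NCCR of $R$. By Lemma~\ref{lem_semi} this property is base-vertex independent, so Theorem~\ref{class_semi_steady} applies (the hypothesis that torsion divisorial ideals are MCM holds because $R$ is toric), yielding $\Cl(R) \cong \ZZ \times A$ with $|A| = |Q_0|/2$, and in particular $\rank \Cl(R) = 1$. Lemma~\ref{cl_toric} then forces the toric diagram $\Delta$ to have exactly four vertices, i.e., $\Delta$ is a quadrilateral. To exclude non-parallelogram quadrilaterals, I would assume $\Delta$ has four pairwise linearly independent primitive edge directions; Proposition~\ref{zigzag_sidepolygon} then produces four distinct slope families of zigzag paths, and Proposition~\ref{slope_isoradial} shows that every two distinct families intersect in exactly one arrow. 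Combined with the bi-partition $Q_0 = \ttI \sqcup \ttI^*$ from Theorem~\ref{class_semi_steady}, the semi-steady condition forces a coherent $2$-coloring of $Q_0$ which must be exchanged along any arrow whose class lies in $\xi + A$; with four independent slope families tiling the torus, no such coloring exists consistent with both the face structure and the bi-partition, giving a contradiction. The ``moreover'' statement follows by noting that once $\Delta$ is a parallelogram, the isoradial condition and the two-slope structure force every face of $\Gamma$ to be a quadrilateral, so $\Gamma$ is homotopy equivalent to a square dimer model.

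The principal obstacle is the final topological/combinatorial argument in $(3) \Rightarrow (1)$: rigorously showing that four pairwise linearly independent slope families of zigzag paths on an isoradial dimer model cannot be reconciled with the $2$-coloring of $Q_0$ induced by the semi-steady bi-partition. I expect this to hinge on carefully transporting the $\ttI$/$\ttI^*$ coloring along arrows of $Q$ while tracking the multiplicities of zigzag paths in each slope family, and showing the resulting constraints admit a solution only when $\Delta$ is a parallelogram.
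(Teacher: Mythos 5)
Your overall cyclic implication $(1)\Rightarrow(2)\Rightarrow(3)\Rightarrow(1)$ matches the paper's structure, and $(1)\Rightarrow(2)$ is essentially the paper's Hanany–Vegh construction. However, there are two substantive gaps.

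For $(2)\Rightarrow(3)$, your two-coloring intuition is correct, but the central assertions are left unverified: that the checkerboard coloring of faces corresponds exactly to the bi-partition of $\calM\subset\Cl(R)$ into $A$ and $\xi+A$, and that same-color vertices $j$ yield $T^j\cong T^{i_0}$ while opposite-color vertices yield $T^j\cong (T^{i_0})^*$. Stating this is where the work actually lies, and you cannot get it cheaply from Theorem~\ref{class_semi_steady} (which assumes semi-steadiness and is therefore not available). The paper proves the isomorphisms directly, by shifting each path $a_{kj}$ from a gray vertex $k$ to the adjacent white vertex $k'$, viewing the shifted path $b_{k'j'}$ as a path in $Q^{\mathrm{op}}$, and then invoking the explicit perfect-matching calculation of Lemma~\ref{key_lem} to show
$T(\sfP_1(a_{kj}),\ldots,\sfP_4(a_{kj}))^* \cong T(\sfP^{\mathrm{op}}_1(b_{k'j'}),\ldots,\sfP^{\mathrm{op}}_4(b_{k'j'}))$.
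Without such a concrete computation your claim is a conjecture, not a proof.

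For $(3)\Rightarrow(1)$ the gap is more serious and you acknowledge it. You correctly reduce to showing that a quadrilateral toric diagram must be a parallelogram, but the argument you sketch (reconciling a $2$-coloring of $Q_0$ with four independent slope families via transport along arrows) is not the mechanism that makes this work, and I do not see how to complete it along those lines. The paper's key observation is entirely missing from your proposal: the semi-steady hypothesis, via Lemma~\ref{lem_semi}, gives algebra isomorphisms $e_i\calP\cong e_j\calP$ or $e_i\calP\cong e_j\calP(Q^{\mathrm{op}},W_{Q^{\mathrm{op}}})$ for all vertices, and these isomorphisms preserve path length, forcing every small cycle in $Q$ (equivalently, every node of $\Gamma$) to have the same valence $m$. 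Proper ordering then bounds $m\le 4$ (at most four slopes exist), and absence of bivalent nodes gives $m\ge 3$. The case $m=3$ is ruled out by a delicate argument using the perfect-matching description of the zigzag families $\calZ_1,\ldots,\calZ_4$; the case $m=4$ means every node meets a zigzag from each slope family, and the cyclic ordering at each node (Figure~\ref{zigzag_m=4}) forces $z_1\parallel z_3$ and $z_2\parallel z_4$, hence a parallelogram. You would need this constant-valence argument (or a genuine substitute for it) to close the proof.
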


\begin{remark}
Even if $R$ is a toric singularity associated with a parallelogram, 
there exists a consistent dimer model that does not give a semi-steady NCCR of $R$ (see Example~\ref{ex_semisteady}). 
On the other hand, a consistent dimer model associated with a quotient singularity by a finite abelian group is unique 
(up to homotopy equivalence), and it is homotopy equivalent to a regular hexagonal dimer model, and gives a steady NCCR. (see Theorem~\ref{dimer}). 
\end{remark}


\begin{proof}[Proof of Theorem~\ref{main_thm}]

To show $(1)$$\Rightarrow$$(2)$, we construct a consistent dimer model whose perfect matching polygon coincides with the toric diagram of $R$. 
There are several methods for constructing it (see e.g., \cite{Gul,IU2}). 
To achieve our purpose, the operation in \cite{HV} is effective. 
In what follows, we will construct a consistent dimer model giving the parallelogram shown in Figure~\ref{ex_FIA} by using such an operation. 
(We can easily generalize this method for other parallelograms.) 

\begin{figure}[H]
\begin{center}
\begin{tabular}{c}
\begin{minipage}{0.38\hsize}
\begin{center}
{\scalebox{0.5}{
\begin{tikzpicture}
\draw [step=1,thin, gray] (-3,-3) grid (4,3);
\filldraw  [ultra thick, fill=black] (0,0) circle [radius=0.1] ;
\node at (-0.3,-0.3){{\huge$0$}};

\filldraw  [ultra thick, fill=black] (3,0) circle [radius=0.1] ; \filldraw  [ultra thick, fill=black] (0,2) circle [radius=0.1] ; 
\filldraw  [ultra thick, fill=black] (-2,0) circle [radius=0.1] ; \filldraw  [ultra thick, fill=black] (1,-2) circle [radius=0.1] ;
\filldraw  [ultra thick, fill=black] (2,-1) circle [radius=0.1] ; \filldraw  [ultra thick, fill=black] (-1,1) circle [radius=0.1] ;

\draw[line width=0.07cm]  (3,0)--(0,2)--(-2,0)--(1,-2)--(3,0) ;
\end{tikzpicture}
} }
\end{center}
\caption{}
\label{ex_FIA}
\end{minipage}

\begin{minipage}{0.38\hsize}
\begin{center}
{\scalebox{0.5}{
\begin{tikzpicture}
\draw [step=1,thin, gray] (-3,-3) grid (4,3);
\filldraw  [ultra thick, fill=black] (0,0) circle [radius=0.1] ;
\node at (-0.3,-0.3){{\huge$0$}};

\filldraw  [ultra thick, fill=black] (3,0) circle [radius=0.1] ; \filldraw  [ultra thick, fill=black] (0,2) circle [radius=0.1] ; 
\filldraw  [ultra thick, fill=black] (-2,0) circle [radius=0.1] ; \filldraw  [ultra thick, fill=black] (1,-2) circle [radius=0.1] ;
\filldraw  [ultra thick, fill=black] (2,-1) circle [radius=0.1] ; \filldraw  [ultra thick, fill=black] (-1,1) circle [radius=0.1] ;

\draw[->, dashed, line width=0.1cm,color=green] (2.5,-0.5)--++(-45:1); 
\draw[->, dashed, line width=0.1cm,color=red] (1.5,-1.5)--++(-45:1);
\draw[->, line width=0.1cm,color=red] (-0.5,1.5)--++(135:1); 
\draw[->, line width=0.1cm,color=green] (-1.5,0.5)--++(135:1);
\draw[->, line width=0.1cm,color=blue] (1.5,1)--++(0.6,0.9); 
\draw[->, dashed, line width=0.1cm,color=blue] (-0.5,-1)--++(-0.6,-0.9);
\draw[line width=0.07cm]  (3,0)--(0,2)--(-2,0)--(1,-2)--(3,0) ;
\end{tikzpicture}
} }
\end{center}
\caption{}
\label{ex_FIA_orthogonal}
\end{minipage}

\end{tabular}
\end{center}
\end{figure}


\noindent \textbf{Hanany-Vegh algorithm for a parallelogram \cite{HV}:} 

\begin{enumerate}[\rm (a)]
\item Consider primitive vectors orthogonal to each primitive side segments of the given polygon (see Figure~\ref{ex_FIA_orthogonal}). 
\item Consider curves on the two-torus $\sfT$ whose homology classes coincide with the above vectors, and write such curves on $\sfT$ according to the following rules: 
 \begin{enumerate}[\rm (b-1)] 
 \item They induce a cell decomposition of $\sfT$.
 \item Each curve intersects with other curves transversely and has a finite number of intersections. 
 \item No three curves intersect in the same point.
 \item Tracing along each curve, we see that its intersections with other curves occur with alternating orientations. 
 (For example, it is crossed from right to left and then left to right.) 
 \end{enumerate}
We call a resulting figure an \emph{admissible position} (see Figure~\ref{ex_FIA_homology}). 
\item After these processes, we have three kinds of quadrangles that are oriented clockwise, anti-clockwise and alternately: 
 \begin{center}
 {\scalebox{0.9}{
\begin{tikzpicture} 

\node (CC1) at (0,0) 
{\scalebox{1.2}{
\begin{tikzpicture}
\coordinate (P1) at (0.5,0); \coordinate (P2) at (1,0.5); \coordinate (P3) at (0.5,1); \coordinate (P4) at (0,0.5);   

\draw[line width=0.03cm]  (0,0)--(1,0)--(1,1)--(0,1)--(0,0); 
\draw[->, line width=0.03cm]  (0,0)--(P4); \draw[->, line width=0.03cm]  (1,0)--(P1); 
\draw[->, line width=0.03cm]  (1,1)--(P2); \draw[->, line width=0.03cm]  (0,1)--(P3); 
\end{tikzpicture} }}; 

\node (CC2) at (2.5,0) 
{\scalebox{1.2}{
\begin{tikzpicture}
\draw[line width=0.03cm]  (0,0)--(1,0)--(1,1)--(0,1)--(0,0); 
\draw[->, line width=0.03cm]  (0,0)--(P1); \draw[->, line width=0.03cm]  (1,0)--(P2); 
\draw[->, line width=0.03cm]  (1,1)--(P3); \draw[->, line width=0.03cm]  (0,1)--(P4); 
\end{tikzpicture} }} ;  

\node (CC3) at (5,0) 
{\scalebox{1.2}{
\begin{tikzpicture}
\draw[line width=0.03cm]  (0,0)--(1,0)--(1,1)--(0,1)--(0,0); 
\draw[->, line width=0.03cm]  (1,0)--(P1); \draw[->, line width=0.03cm]  (1,0)--(P2); 
\draw[->, line width=0.03cm]  (0,1)--(P3); \draw[->, line width=0.03cm]  (0,1)--(P4); 
\end{tikzpicture} }}; 

\end{tikzpicture}
}}
\end{center}
 
\item Draw white (resp. black) nodes in quadrangles oriented clockwise (resp. anti-clockwise). 
\item Connect white nodes to black ones facing each other across intersections of curves. 
\item Then, we obtain a square dimer model shown in Figure~\ref{ex_FIA_dimer}. We can check that this is isoradial, thus consistent in particular.  
\end{enumerate} 

\begin{figure}[H]
\begin{center}
\begin{tabular}{c}
\begin{minipage}{0.4\hsize}
\begin{center}
{\scalebox{0.3}{
\begin{tikzpicture}
\draw[line width=0.08cm]  (0,0) rectangle (12,12);

\draw[<-, dashed, line width=0.1cm,blue] (1,0)--(9,12); 
\draw[<-, dashed, line width=0.1cm,blue] (5,0)--(12,10.5);  
\draw[<-, dashed, line width=0.1cm,blue] (9,0)--(12,4.5); 
\draw[<-, dashed, line width=0.1cm,blue] (0,4.5)--(5,12);
\draw[<-, dashed, line width=0.1cm,blue] (0,10.5)--(1,12);

\draw[->, line width=0.1cm, blue] (3,0)--(11,12);
\draw[->, line width=0.1cm, blue] (7,0)--(12,7.5);
\draw[->, line width=0.1cm, blue] (11,0)--(12,1.5); 
\draw[->, line width=0.1cm, blue] (0,1.5)--(7,12); 
\draw[->, line width=0.1cm, blue] (0,7.5)--(3,12); 

\draw[->, line width=0.1cm,green] (12,0)--(0,12); 
\draw[->, line width=0.1cm,red] (6,0)--(0,6); \draw[->, line width=0.1cm,red] (12,6)--(6,12); 
\draw[->, dashed, line width=0.1cm,green] (0,3)--(3,0); \draw[->, dashed, line width=0.1cm,green] (3,12)--(12,3); 
\draw[->, dashed, line width=0.1cm,red] (0,9)--(9,0); \draw[->, dashed, line width=0.1cm,red] (9,12)--(12,9); 
\end{tikzpicture}
} }
\end{center}
\caption{}
\label{ex_FIA_homology}
\end{minipage}

\begin{minipage}{0.4\hsize}
\begin{center}
{\scalebox{0.3}{
\begin{tikzpicture}
\draw[line width=0.08cm]  (0,0) rectangle (12,12);

\draw[<-, dashed, line width=0.1cm,lightgray] (1,0)--(9,12); 
\draw[<-, dashed, line width=0.1cm,lightgray] (5,0)--(12,10.5);  
\draw[<-, dashed, line width=0.1cm,lightgray] (9,0)--(12,4.5); 
\draw[<-, dashed, line width=0.1cm,lightgray] (0,4.5)--(5,12);
\draw[<-, dashed, line width=0.1cm,lightgray] (0,10.5)--(1,12);

\draw[->, line width=0.1cm,lightgray] (3,0)--(11,12);
\draw[->, line width=0.1cm,lightgray] (7,0)--(12,7.5);
\draw[->, line width=0.1cm,lightgray] (11,0)--(12,1.5); 
\draw[->, line width=0.1cm,lightgray] (0,1.5)--(7,12); 
\draw[->, line width=0.1cm,lightgray] (0,7.5)--(3,12); 

\draw[->, line width=0.1cm,lightgray] (12,0)--(0,12); 
\draw[->, line width=0.1cm,lightgray] (6,0)--(0,6); \draw[->, line width=0.1cm,lightgray] (12,6)--(6,12); 
\draw[->, dashed, line width=0.1cm,lightgray] (0,3)--(3,0); \draw[->, dashed, line width=0.1cm,lightgray] (3,12)--(12,3); 
\draw[->, dashed, line width=0.1cm,lightgray] (0,9)--(9,0); \draw[->, dashed, line width=0.1cm,lightgray] (9,12)--(12,9); 

\coordinate (B1) at (0.5,4.025); \coordinate (B2) at (0.5,10.025); \coordinate (B3) at (3,1.65); \coordinate (B4) at (3,7.65);
\coordinate (B5) at (5.5,5.275); \coordinate (B6) at (5.5,11.275); \coordinate (B7) at (8,2.9); \coordinate (B8) at (8,8.9);
\coordinate (B9) at (10.5,0.525); \coordinate (B10) at (10.5,6.525); 

\coordinate (W1) at (0.5,1.025); \coordinate (W2) at (0.5,7.025); \coordinate (W3) at (3,4.65); \coordinate (W4) at (3,10.65);
\coordinate (W5) at (5.5,2.275); \coordinate (W6) at (5.5,8.275); \coordinate (W7) at (8,5.9); \coordinate (W8) at (8,11.9); \coordinate (W8b) at (8,0);
\coordinate (W9) at (10.5,3.525); \coordinate (W10) at (10.5,9.525); 

\draw[line width=0.08cm]  (W1)--(B1)--(W3)--(B3)--(W1); \draw[line width=0.08cm]  (W3)--(B3)--(W5)--(B5)--(W3); 
\draw[line width=0.08cm]  (W5)--(B5)--(W7)--(B7)--(W5); \draw[line width=0.08cm]  (W7)--(B7)--(W9)--(B10)--(W7); 
\draw[line width=0.08cm]  (B1)--(W2)--(B4)--(W3)--(B1); \draw[line width=0.08cm]  (B4)--(W6)--(B5)--(W3)--(B4); 
\draw[line width=0.08cm]  (B5)--(W6)--(B8)--(W7)--(B5); \draw[line width=0.08cm]  (B8)--(W10)--(B10)--(W7)--(B8); 
\draw[line width=0.08cm]  (W2)--(B2)--(W4)--(B4)--(W2); \draw[line width=0.08cm]  (W4)--(B4)--(W6)--(B6)--(W4);
\draw[line width=0.08cm]  (W6)--(B6)--(W8)--(B8)--(W6); \draw[line width=0.08cm]  (W8b)--(B9) ;

\draw[line width=0.08cm]  (W1)--(0.5,0); \draw[line width=0.08cm]  (B3)--(3,0); \draw[line width=0.08cm]  (W5)--(5.5,0); 
\draw[line width=0.08cm]  (B7)--(8,0); \draw[line width=0.08cm]  (W9)--(10.5,0); 
\draw[line width=0.08cm]  (B2)--(0.5,12); \draw[line width=0.08cm]  (W4)--(3,12); \draw[line width=0.08cm]  (B6)--(5.5,12); 
\draw[line width=0.08cm]  (W8)--(8,12); \draw[line width=0.08cm]  (W10)--(10.5,12);

\draw[line width=0.08cm]  (W1)--(0,0.9); \draw[line width=0.08cm]  (B1)--(0,3.9); \draw[line width=0.08cm]  (W2)--(0,6.9); 
\draw[line width=0.08cm]  (B2)--(0,9.9); \draw[line width=0.08cm]  (W9)--(12,3.9); \draw[line width=0.08cm]  (B10)--(12,6.9);
\draw[line width=0.08cm]  (W10)--(12,9.9); \draw[line width=0.08cm]  (B9)--(12,0.9);

\filldraw  [line width=0.05cm, fill=black] (B1) circle [radius=0.25] ; \filldraw  [line width=0.05cm, fill=black] (B2) circle [radius=0.25] ;
\filldraw  [line width=0.05cm, fill=black] (B3) circle [radius=0.25] ; \filldraw  [line width=0.05cm, fill=black] (B4) circle [radius=0.25] ;
\filldraw  [line width=0.05cm, fill=black] (B5) circle [radius=0.25] ; \filldraw  [line width=0.05cm, fill=black] (B6) circle [radius=0.25] ;
\filldraw  [line width=0.05cm, fill=black] (B7) circle [radius=0.25] ; \filldraw  [line width=0.05cm, fill=black] (B8) circle [radius=0.25] ;
\filldraw  [line width=0.05cm, fill=black] (B9) circle [radius=0.25] ;
\filldraw  [line width=0.05cm, fill=black] (B10) circle [radius=0.25] ;

\filldraw  [line width=0.07cm, fill=white] (W1) circle [radius=0.25] ; \filldraw  [line width=0.07cm, fill=white] (W2) circle [radius=0.25] ;
\filldraw  [line width=0.07cm, fill=white] (W3) circle [radius=0.25] ; \filldraw  [line width=0.07cm, fill=white] (W4) circle [radius=0.25] ;
\filldraw  [line width=0.07cm, fill=white] (W5) circle [radius=0.25] ; \filldraw  [line width=0.07cm, fill=white] (W6) circle [radius=0.25] ;
\filldraw  [line width=0.07cm, fill=white] (W7) circle [radius=0.25] ; \filldraw  [line width=0.07cm, fill=white] (W8) circle [radius=0.25] ;
\filldraw  [line width=0.07cm, fill=white] (W8b) circle [radius=0.25] ;
\filldraw  [line width=0.07cm, fill=white] (W9) circle [radius=0.25] ; \filldraw  [line width=0.07cm, fill=white] (W10) circle [radius=0.25] ;
\end{tikzpicture}
} }
\end{center}
\caption{}
\label{ex_FIA_dimer}
\end{minipage}

\end{tabular}
\end{center}
\end{figure}

Note that curves in an admissible position correspond to zigzag paths of the resulting consistent dimer model with the opposite direction. 
Thus, the correspondence in Proposition~\ref{zigzag_sidepolygon} asserts that the given parallelogram coincides with the perfect matching polygon 
by rotating $90$ degrees in the positive direction. Thus, we have the same lattice polygon up to unimodular transformations. 

Using the same argument, we can obtain a dimer model that is homotopy equivalent to a square dimer model for an arbitrary parallelogram.  

\medskip

Next, we show $(2)$$\Rightarrow$$(3)$. 
Let $\Gamma$ be a dimer model associated with a given toric singularity $R$, 
and suppose that $\Gamma$ is homotopy equivalent to a square dimer model. 
Thus, the universal cover of $\Gamma$ takes the form shown in Figure~\ref{covering_square}, 
and Figure~\ref{zig_zag_square} is the list of zigzag paths on the universal cover. (They continue infinitely in both directions.) 
Since these zigzag paths determine four distinct slopes, the toric diagram of $R$ is a quadrangle by Proposition~\ref{zigzag_sidepolygon}. 
In addition, by observing these zigzag paths, we see that $\Gamma$ is isoradial. 

\begin{figure}[h!]
\begin{center}
{\scalebox{0.35}{
\begin{tikzpicture}
\node (B1) at (0,0){$$}; \node (B2) at (4,0){$$}; \node (B3) at (8,0){$$}; \node (B4) at (12,0){$$}; 
\node (B5) at (2,2){$$}; \node (B6) at (6,2){$$}; \node (B7) at (10,2){$$}; \node (B8) at (14,2){$$}; 
\node (B9) at (0,4){$$}; \node (B10) at (4,4){$$}; \node (B11) at (8,4){$$}; \node (B12) at (12,4){$$}; 
\node (B13) at (2,6){$$}; \node (B14) at (6,6){$$}; \node (B15) at (10,6){$$}; \node (B16) at (14,6){$$};
\node (B17) at (0,8){$$}; \node (B18) at (4,8){$$}; \node (B19) at (8,8){$$}; \node (B20) at (12,8){$$}; 
\node (B21) at (2,10){$$}; \node (B22) at (6,10){$$}; \node (B23) at (10,10){$$}; \node (B24) at (14,10){$$};

\node (W1) at (2,0){$$}; \node (W2) at (6,0){$$}; \node (W3) at (10,0){$$}; \node (W4) at (14,0){$$}; 
\node (W5) at (0,2){$$}; \node (W6) at (4,2){$$}; \node (W7) at (8,2){$$}; \node (W8) at (12,2){$$};
\node (W9) at (2,4){$$}; \node (W10) at (6,4){$$}; \node (W11) at (10,4){$$}; \node (W12) at (14,4){$$}; 
\node (W13) at (0,6){$$}; \node (W14) at (4,6){$$}; \node (W15) at (8,6){$$}; \node (W16) at (12,6){$$};
\node (W17) at (2,8){$$}; \node (W18) at (6,8){$$}; \node (W19) at (10,8){$$}; \node (W20) at (14,8){$$}; 
\node (W21) at (0,10){$$}; \node (W22) at (4,10){$$}; \node (W23) at (8,10){$$}; \node (W24) at (12,10){$$};

\draw[line width=0.06cm]  (B1)--(W1)--(B5)--(W5)--(B1); \draw[line width=0.06cm]  (W1)--(B2)--(W6)--(B5)--(W1);
\draw[line width=0.06cm]  (B2)--(W2)--(B6)--(W6)--(B2); \draw[line width=0.06cm]  (W2)--(B3)--(W7)--(B6)--(W2);
\draw[line width=0.06cm]  (B3)--(W3)--(B7)--(W7)--(B3); \draw[line width=0.06cm]  (W3)--(B4)--(W8)--(B7)--(W3);
\draw[line width=0.06cm]  (B4)--(W4)--(B8)--(W8)--(B4); 

\draw[line width=0.06cm]  (W5)--(B5)--(W9)--(B9)--(W5); \draw[line width=0.06cm]  (B5)--(W6)--(B10)--(W9)--(B5); 
\draw[line width=0.06cm]  (W6)--(B6)--(W10)--(B10)--(W6); \draw[line width=0.06cm]  (B6)--(W7)--(B11)--(W10)--(B6); 
\draw[line width=0.06cm]  (W7)--(B7)--(W11)--(B11)--(W7); \draw[line width=0.06cm]  (B7)--(W8)--(B12)--(W11)--(B7); 
\draw[line width=0.06cm]  (W8)--(B8)--(W12)--(B12)--(W8); 

\draw[line width=0.06cm]  (B9)--(W9)--(B13)--(W13)--(B9); \draw[line width=0.06cm]  (W9)--(B10)--(W14)--(B13)--(W9);
\draw[line width=0.06cm]  (B10)--(W10)--(B14)--(W14)--(B10); \draw[line width=0.06cm]  (W10)--(B11)--(W15)--(B14)--(W10);
\draw[line width=0.06cm]  (B11)--(W11)--(B15)--(W15)--(B11); \draw[line width=0.06cm]  (W11)--(B12)--(W16)--(B15)--(W11);
\draw[line width=0.06cm]  (B12)--(W12)--(B16)--(W16)--(B12); 

\draw[line width=0.06cm]  (W13)--(B13)--(W17)--(B17)--(W13); \draw[line width=0.06cm]  (B13)--(W14)--(B18)--(W17)--(B13); 
\draw[line width=0.06cm]  (W14)--(B14)--(W18)--(B18)--(W14); \draw[line width=0.06cm]  (B14)--(W15)--(B19)--(W18)--(B14); 
\draw[line width=0.06cm]  (W15)--(B15)--(W19)--(B19)--(W15); \draw[line width=0.06cm]  (B15)--(W16)--(B20)--(W19)--(B15); 
\draw[line width=0.06cm]  (W16)--(B16)--(W20)--(B20)--(W16); 

\draw[line width=0.06cm]  (B17)--(W17)--(B21)--(W21)--(B17); \draw[line width=0.06cm]  (W17)--(B18)--(W22)--(B21)--(W17);
\draw[line width=0.06cm]  (B18)--(W18)--(B22)--(W22)--(B18); \draw[line width=0.06cm]  (W18)--(B19)--(W23)--(B22)--(W18);
\draw[line width=0.06cm]  (B19)--(W19)--(B23)--(W23)--(B19); \draw[line width=0.06cm]  (W19)--(B20)--(W24)--(B23)--(W19);
\draw[line width=0.06cm]  (B20)--(W20)--(B24)--(W24)--(B20); 

\filldraw  [ultra thick, fill=black] (0,0) circle [radius=0.25] ; \filldraw  [ultra thick, fill=black] (4,0) circle [radius=0.25] ;
\filldraw  [ultra thick, fill=black] (8,0) circle [radius=0.25] ; \filldraw  [ultra thick, fill=black] (12,0) circle [radius=0.25] ; 
\filldraw  [ultra thick, fill=black] (2,2) circle [radius=0.25] ; \filldraw  [ultra thick, fill=black] (6,2) circle [radius=0.25] ;
\filldraw  [ultra thick, fill=black] (10,2) circle [radius=0.25] ; \filldraw  [ultra thick, fill=black] (14,2) circle [radius=0.25] ;

\filldraw  [ultra thick, fill=black] (0,4) circle [radius=0.25] ; \filldraw  [ultra thick, fill=black] (4,4) circle [radius=0.25] ;
\filldraw  [ultra thick, fill=black] (8,4) circle [radius=0.25] ; \filldraw  [ultra thick, fill=black] (12,4) circle [radius=0.25] ;
\filldraw  [ultra thick, fill=black] (2,6) circle [radius=0.25] ; \filldraw  [ultra thick, fill=black] (6,6) circle [radius=0.25] ;
\filldraw  [ultra thick, fill=black] (10,6) circle [radius=0.25] ; \filldraw  [ultra thick, fill=black] (14,6) circle [radius=0.25] ;

\filldraw  [ultra thick, fill=black] (0,8) circle [radius=0.25] ; \filldraw  [ultra thick, fill=black] (4,8) circle [radius=0.25] ;
\filldraw  [ultra thick, fill=black] (8,8) circle [radius=0.25] ; \filldraw  [ultra thick, fill=black] (12,8) circle [radius=0.25] ;
\filldraw  [ultra thick, fill=black] (2,10) circle [radius=0.25] ; \filldraw  [ultra thick, fill=black] (6,10) circle [radius=0.25] ;
\filldraw  [ultra thick, fill=black] (10,10) circle [radius=0.25] ; \filldraw  [ultra thick, fill=black] (14,10) circle [radius=0.25] ;

\filldraw  [ultra thick, fill=white] (2,0) circle [radius=0.25] ; \filldraw  [ultra thick, fill=white] (6,0) circle [radius=0.25] ;
\filldraw  [ultra thick, fill=white] (10,0) circle [radius=0.25] ; \filldraw  [ultra thick, fill=white] (14,0) circle [radius=0.25] ;
\filldraw  [ultra thick, fill=white] (0,2) circle [radius=0.25] ; \filldraw  [ultra thick, fill=white] (4,2) circle [radius=0.25] ;
\filldraw  [ultra thick, fill=white] (8,2) circle [radius=0.25] ; \filldraw  [ultra thick, fill=white] (12,2) circle [radius=0.25] ;

\filldraw  [ultra thick, fill=white] (2,4) circle [radius=0.25] ; \filldraw  [ultra thick, fill=white] (6,4) circle [radius=0.25] ;
\filldraw  [ultra thick, fill=white] (10,4) circle [radius=0.25] ; \filldraw  [ultra thick, fill=white] (14,4) circle [radius=0.25] ;
\filldraw  [ultra thick, fill=white] (0,6) circle [radius=0.25] ; \filldraw  [ultra thick, fill=white] (4,6) circle [radius=0.25] ;
\filldraw  [ultra thick, fill=white] (8,6) circle [radius=0.25] ; \filldraw  [ultra thick, fill=white] (12,6) circle [radius=0.25] ;

\filldraw  [ultra thick, fill=white] (2,8) circle [radius=0.25] ; \filldraw  [ultra thick, fill=white] (6,8) circle [radius=0.25] ;
\filldraw  [ultra thick, fill=white] (10,8) circle [radius=0.25] ; \filldraw  [ultra thick, fill=white] (14,8) circle [radius=0.25] ;
\filldraw  [ultra thick, fill=white] (0,10) circle [radius=0.25] ; \filldraw  [ultra thick, fill=white] (4,10) circle [radius=0.25] ;
\filldraw  [ultra thick, fill=white] (8,10) circle [radius=0.25] ; \filldraw  [ultra thick, fill=white] (12,10) circle [radius=0.25] ;

\draw[loosely dotted, line width=0.1cm]  (7,-0.8)--(7,-1.8) ; \draw[loosely dotted, line width=0.1cm]  (7,10.8)--(7,11.8) ;
\draw[loosely dotted, line width=0.1cm]  (-1,5)--(-2,5) ; \draw[loosely dotted, line width=0.1cm]  (15,5)--(16,5) ;
\end{tikzpicture}
} }
\caption{The universal cover of a square dimer model}
\label{covering_square}
\end{center}
\end{figure}
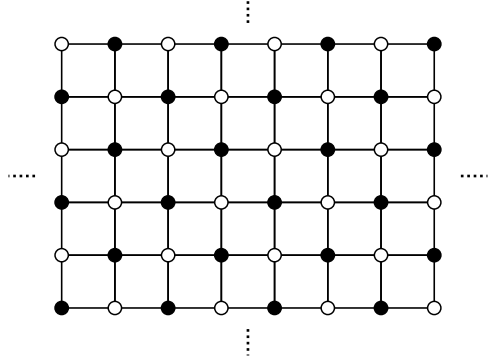

\begin{figure}[h!]
\begin{center}
{\scalebox{0.95}{
\begin{tikzpicture}
\node (ZZ1) at (0,0) 
{\scalebox{0.3}{
\begin{tikzpicture}
\coordinate (B1) at (0,0); \coordinate (B2) at (4,0); \coordinate (B3) at (8,0); \coordinate (B4) at (12,0); 
\coordinate (B5) at (2,2); \coordinate (B6) at (6,2); \coordinate (B7) at (10,2); \coordinate (B8) at (14,2); 
\coordinate (B9) at (0,4); \coordinate (B10) at (4,4); \coordinate (B11) at (8,4); \coordinate (B12) at (12,4); 
\coordinate (B13) at (2,6); \coordinate (B14) at (6,6); \coordinate (B15) at (10,6); \coordinate (B16) at (14,6);
\coordinate (B17) at (0,8); \coordinate (B18) at (4,8); \coordinate (B19) at (8,8); \coordinate (B20) at (12,8); 
\coordinate (B21) at (2,10); \coordinate (B22) at (6,10); \coordinate (B23) at (10,10); \coordinate (B24) at (14,10);

\coordinate (W1) at (2,0); \coordinate (W2) at (6,0); \coordinate (W3) at (10,0); \coordinate (W4) at (14,0); 
\coordinate (W5) at (0,2); \coordinate (W6) at (4,2); \coordinate (W7) at (8,2); \coordinate (W8) at (12,2);
\coordinate (W9) at (2,4); \coordinate (W10) at (6,4); \coordinate (W11) at (10,4); \coordinate (W12) at (14,4); 
\coordinate (W13) at (0,6); \coordinate (W14) at (4,6); \coordinate (W15) at (8,6); \coordinate (W16) at (12,6);
\coordinate (W17) at (2,8); \coordinate (W18) at (6,8); \coordinate (W19) at (10,8); \coordinate (W20) at (14,8); 
\coordinate (W21) at (0,10); \coordinate (W22) at (4,10); \coordinate (W23) at (8,10); \coordinate (W24) at (12,10);

\draw[line width=0.06cm]  (B1)--(W1)--(B5)--(W5)--(B1); \draw[line width=0.06cm]  (W1)--(B2)--(W6)--(B5)--(W1);
\draw[line width=0.06cm]  (B2)--(W2)--(B6)--(W6)--(B2); \draw[line width=0.06cm]  (W2)--(B3)--(W7)--(B6)--(W2);
\draw[line width=0.06cm]  (B3)--(W3)--(B7)--(W7)--(B3); \draw[line width=0.06cm]  (W3)--(B4)--(W8)--(B7)--(W3);
\draw[line width=0.06cm]  (B4)--(W4)--(B8)--(W8)--(B4); 

\draw[line width=0.06cm]  (W5)--(B5)--(W9)--(B9)--(W5); \draw[line width=0.06cm]  (B5)--(W6)--(B10)--(W9)--(B5); 
\draw[line width=0.06cm]  (W6)--(B6)--(W10)--(B10)--(W6); \draw[line width=0.06cm]  (B6)--(W7)--(B11)--(W10)--(B6); 
\draw[line width=0.06cm]  (W7)--(B7)--(W11)--(B11)--(W7); \draw[line width=0.06cm]  (B7)--(W8)--(B12)--(W11)--(B7); 
\draw[line width=0.06cm]  (W8)--(B8)--(W12)--(B12)--(W8); 

\draw[line width=0.06cm]  (B9)--(W9)--(B13)--(W13)--(B9); \draw[line width=0.06cm]  (W9)--(B10)--(W14)--(B13)--(W9);
\draw[line width=0.06cm]  (B10)--(W10)--(B14)--(W14)--(B10); \draw[line width=0.06cm]  (W10)--(B11)--(W15)--(B14)--(W10);
\draw[line width=0.06cm]  (B11)--(W11)--(B15)--(W15)--(B11); \draw[line width=0.06cm]  (W11)--(B12)--(W16)--(B15)--(W11);
\draw[line width=0.06cm]  (B12)--(W12)--(B16)--(W16)--(B12); 

\draw[line width=0.06cm]  (W13)--(B13)--(W17)--(B17)--(W13); \draw[line width=0.06cm]  (B13)--(W14)--(B18)--(W17)--(B13); 
\draw[line width=0.06cm]  (W14)--(B14)--(W18)--(B18)--(W14); \draw[line width=0.06cm]  (B14)--(W15)--(B19)--(W18)--(B14); 
\draw[line width=0.06cm]  (W15)--(B15)--(W19)--(B19)--(W15); \draw[line width=0.06cm]  (B15)--(W16)--(B20)--(W19)--(B15); 
\draw[line width=0.06cm]  (W16)--(B16)--(W20)--(B20)--(W16); 

\draw[line width=0.06cm]  (B17)--(W17)--(B21)--(W21)--(B17); \draw[line width=0.06cm]  (W17)--(B18)--(W22)--(B21)--(W17);
\draw[line width=0.06cm]  (B18)--(W18)--(B22)--(W22)--(B18); \draw[line width=0.06cm]  (W18)--(B19)--(W23)--(B22)--(W18);
\draw[line width=0.06cm]  (B19)--(W19)--(B23)--(W23)--(B19); \draw[line width=0.06cm]  (W19)--(B20)--(W24)--(B23)--(W19);
\draw[line width=0.06cm]  (B20)--(W20)--(B24)--(W24)--(B20); 
 
\filldraw  [ultra thick, fill=black] (0,0) circle [radius=0.25] ; \filldraw  [ultra thick, fill=black] (4,0) circle [radius=0.25] ;
\filldraw  [ultra thick, fill=black] (8,0) circle [radius=0.25] ; \filldraw  [ultra thick, fill=black] (12,0) circle [radius=0.25] ; 
\filldraw  [ultra thick, fill=black] (2,2) circle [radius=0.25] ; \filldraw  [ultra thick, fill=black] (6,2) circle [radius=0.25] ;
\filldraw  [ultra thick, fill=black] (10,2) circle [radius=0.25] ; \filldraw  [ultra thick, fill=black] (14,2) circle [radius=0.25] ;

\filldraw  [ultra thick, fill=black] (0,4) circle [radius=0.25] ; \filldraw  [ultra thick, fill=black] (4,4) circle [radius=0.25] ;
\filldraw  [ultra thick, fill=black] (8,4) circle [radius=0.25] ; \filldraw  [ultra thick, fill=black] (12,4) circle [radius=0.25] ;
\filldraw  [ultra thick, fill=black] (2,6) circle [radius=0.25] ; \filldraw  [ultra thick, fill=black] (6,6) circle [radius=0.25] ;
\filldraw  [ultra thick, fill=black] (10,6) circle [radius=0.25] ; \filldraw  [ultra thick, fill=black] (14,6) circle [radius=0.25] ;

\filldraw  [ultra thick, fill=black] (0,8) circle [radius=0.25] ; \filldraw  [ultra thick, fill=black] (4,8) circle [radius=0.25] ;
\filldraw  [ultra thick, fill=black] (8,8) circle [radius=0.25] ; \filldraw  [ultra thick, fill=black] (12,8) circle [radius=0.25] ;
\filldraw  [ultra thick, fill=black] (2,10) circle [radius=0.25] ; \filldraw  [ultra thick, fill=black] (6,10) circle [radius=0.25] ;
\filldraw  [ultra thick, fill=black] (10,10) circle [radius=0.25] ; \filldraw  [ultra thick, fill=black] (14,10) circle [radius=0.25] ;

\filldraw  [ultra thick, fill=white] (2,0) circle [radius=0.25] ; \filldraw  [ultra thick, fill=white] (6,0) circle [radius=0.25] ;
\filldraw  [ultra thick, fill=white] (10,0) circle [radius=0.25] ; \filldraw  [ultra thick, fill=white] (14,0) circle [radius=0.25] ;
\filldraw  [ultra thick, fill=white] (0,2) circle [radius=0.25] ; \filldraw  [ultra thick, fill=white] (4,2) circle [radius=0.25] ;
\filldraw  [ultra thick, fill=white] (8,2) circle [radius=0.25] ; \filldraw  [ultra thick, fill=white] (12,2) circle [radius=0.25] ;

\filldraw  [ultra thick, fill=white] (2,4) circle [radius=0.25] ; \filldraw  [ultra thick, fill=white] (6,4) circle [radius=0.25] ;
\filldraw  [ultra thick, fill=white] (10,4) circle [radius=0.25] ; \filldraw  [ultra thick, fill=white] (14,4) circle [radius=0.25] ;
\filldraw  [ultra thick, fill=white] (0,6) circle [radius=0.25] ; \filldraw  [ultra thick, fill=white] (4,6) circle [radius=0.25] ;
\filldraw  [ultra thick, fill=white] (8,6) circle [radius=0.25] ; \filldraw  [ultra thick, fill=white] (12,6) circle [radius=0.25] ;

\filldraw  [ultra thick, fill=white] (2,8) circle [radius=0.25] ; \filldraw  [ultra thick, fill=white] (6,8) circle [radius=0.25] ;
\filldraw  [ultra thick, fill=white] (10,8) circle [radius=0.25] ; \filldraw  [ultra thick, fill=white] (14,8) circle [radius=0.25] ;
\filldraw  [ultra thick, fill=white] (0,10) circle [radius=0.25] ; \filldraw  [ultra thick, fill=white] (4,10) circle [radius=0.25] ;
\filldraw  [ultra thick, fill=white] (8,10) circle [radius=0.25] ; \filldraw  [ultra thick, fill=white] (12,10) circle [radius=0.25] ;

\draw[->, line width=0.25cm, rounded corners, color=red] (W1)--(B2)--(W6)--(B6)--(W10)--(B11)--(W15)--(B15)--(W19)--(B20)--(W24)--(B24) ;
\draw[->, line width=0.25cm, rounded corners, color=red] (B1)--(W5)--(B5)--(W9)--(B10)--(W14)--(B14)--(W18)--(B19)--(W23)--(B23); 
\draw[->, line width=0.25cm, rounded corners, color=red] (B9)--(W13)--(B13)--(W17)--(B18)--(W22)--(B22) ;
\draw[->, line width=0.25cm, rounded corners, color=red] (B17)--(W21)--(B21) ; 
\draw[->, line width=0.25cm, rounded corners, color=red] (W2)--(B3)--(W7)--(B7)--(W11)--(B12)--(W16)--(B16)--(W20) ; 
\draw[->, line width=0.25cm, rounded corners, color=red] (W3)--(B4)--(W8)--(B8)--(W12) ; 
\end{tikzpicture}
} }; 

\node (ZZ2) at (6,0) 
{\scalebox{0.3}{
\begin{tikzpicture}
\coordinate (B1) at (0,0); \coordinate (B2) at (4,0); \coordinate (B3) at (8,0); \coordinate (B4) at (12,0); 
\coordinate (B5) at (2,2); \coordinate (B6) at (6,2); \coordinate (B7) at (10,2); \coordinate (B8) at (14,2); 
\coordinate (B9) at (0,4); \coordinate (B10) at (4,4); \coordinate (B11) at (8,4); \coordinate (B12) at (12,4); 
\coordinate (B13) at (2,6); \coordinate (B14) at (6,6); \coordinate (B15) at (10,6); \coordinate (B16) at (14,6);
\coordinate (B17) at (0,8); \coordinate (B18) at (4,8); \coordinate (B19) at (8,8); \coordinate (B20) at (12,8); 
\coordinate (B21) at (2,10); \coordinate (B22) at (6,10); \coordinate (B23) at (10,10); \coordinate (B24) at (14,10);

\coordinate (W1) at (2,0); \coordinate (W2) at (6,0); \coordinate (W3) at (10,0); \coordinate (W4) at (14,0); 
\coordinate (W5) at (0,2); \coordinate (W6) at (4,2); \coordinate (W7) at (8,2); \coordinate (W8) at (12,2);
\coordinate (W9) at (2,4); \coordinate (W10) at (6,4); \coordinate (W11) at (10,4); \coordinate (W12) at (14,4); 
\coordinate (W13) at (0,6); \coordinate (W14) at (4,6); \coordinate (W15) at (8,6); \coordinate (W16) at (12,6);
\coordinate (W17) at (2,8); \coordinate (W18) at (6,8); \coordinate (W19) at (10,8); \coordinate (W20) at (14,8); 
\coordinate (W21) at (0,10); \coordinate (W22) at (4,10); \coordinate (W23) at (8,10); \coordinate (W24) at (12,10);

\draw[line width=0.06cm]  (B1)--(W1)--(B5)--(W5)--(B1); \draw[line width=0.06cm]  (W1)--(B2)--(W6)--(B5)--(W1);
\draw[line width=0.06cm]  (B2)--(W2)--(B6)--(W6)--(B2); \draw[line width=0.06cm]  (W2)--(B3)--(W7)--(B6)--(W2);
\draw[line width=0.06cm]  (B3)--(W3)--(B7)--(W7)--(B3); \draw[line width=0.06cm]  (W3)--(B4)--(W8)--(B7)--(W3);
\draw[line width=0.06cm]  (B4)--(W4)--(B8)--(W8)--(B4); 

\draw[line width=0.06cm]  (W5)--(B5)--(W9)--(B9)--(W5); \draw[line width=0.06cm]  (B5)--(W6)--(B10)--(W9)--(B5); 
\draw[line width=0.06cm]  (W6)--(B6)--(W10)--(B10)--(W6); \draw[line width=0.06cm]  (B6)--(W7)--(B11)--(W10)--(B6); 
\draw[line width=0.06cm]  (W7)--(B7)--(W11)--(B11)--(W7); \draw[line width=0.06cm]  (B7)--(W8)--(B12)--(W11)--(B7); 
\draw[line width=0.06cm]  (W8)--(B8)--(W12)--(B12)--(W8); 

\draw[line width=0.06cm]  (B9)--(W9)--(B13)--(W13)--(B9); \draw[line width=0.06cm]  (W9)--(B10)--(W14)--(B13)--(W9);
\draw[line width=0.06cm]  (B10)--(W10)--(B14)--(W14)--(B10); \draw[line width=0.06cm]  (W10)--(B11)--(W15)--(B14)--(W10);
\draw[line width=0.06cm]  (B11)--(W11)--(B15)--(W15)--(B11); \draw[line width=0.06cm]  (W11)--(B12)--(W16)--(B15)--(W11);
\draw[line width=0.06cm]  (B12)--(W12)--(B16)--(W16)--(B12); 

\draw[line width=0.06cm]  (W13)--(B13)--(W17)--(B17)--(W13); \draw[line width=0.06cm]  (B13)--(W14)--(B18)--(W17)--(B13); 
\draw[line width=0.06cm]  (W14)--(B14)--(W18)--(B18)--(W14); \draw[line width=0.06cm]  (B14)--(W15)--(B19)--(W18)--(B14); 
\draw[line width=0.06cm]  (W15)--(B15)--(W19)--(B19)--(W15); \draw[line width=0.06cm]  (B15)--(W16)--(B20)--(W19)--(B15); 
\draw[line width=0.06cm]  (W16)--(B16)--(W20)--(B20)--(W16); 

\draw[line width=0.06cm]  (B17)--(W17)--(B21)--(W21)--(B17); \draw[line width=0.06cm]  (W17)--(B18)--(W22)--(B21)--(W17);
\draw[line width=0.06cm]  (B18)--(W18)--(B22)--(W22)--(B18); \draw[line width=0.06cm]  (W18)--(B19)--(W23)--(B22)--(W18);
\draw[line width=0.06cm]  (B19)--(W19)--(B23)--(W23)--(B19); \draw[line width=0.06cm]  (W19)--(B20)--(W24)--(B23)--(W19);
\draw[line width=0.06cm]  (B20)--(W20)--(B24)--(W24)--(B20); 

\filldraw  [ultra thick, fill=black] (0,0) circle [radius=0.25] ; \filldraw  [ultra thick, fill=black] (4,0) circle [radius=0.25] ;
\filldraw  [ultra thick, fill=black] (8,0) circle [radius=0.25] ; \filldraw  [ultra thick, fill=black] (12,0) circle [radius=0.25] ; 
\filldraw  [ultra thick, fill=black] (2,2) circle [radius=0.25] ; \filldraw  [ultra thick, fill=black] (6,2) circle [radius=0.25] ;
\filldraw  [ultra thick, fill=black] (10,2) circle [radius=0.25] ; \filldraw  [ultra thick, fill=black] (14,2) circle [radius=0.25] ;

\filldraw  [ultra thick, fill=black] (0,4) circle [radius=0.25] ; \filldraw  [ultra thick, fill=black] (4,4) circle [radius=0.25] ;
\filldraw  [ultra thick, fill=black] (8,4) circle [radius=0.25] ; \filldraw  [ultra thick, fill=black] (12,4) circle [radius=0.25] ;
\filldraw  [ultra thick, fill=black] (2,6) circle [radius=0.25] ; \filldraw  [ultra thick, fill=black] (6,6) circle [radius=0.25] ;
\filldraw  [ultra thick, fill=black] (10,6) circle [radius=0.25] ; \filldraw  [ultra thick, fill=black] (14,6) circle [radius=0.25] ;

\filldraw  [ultra thick, fill=black] (0,8) circle [radius=0.25] ; \filldraw  [ultra thick, fill=black] (4,8) circle [radius=0.25] ;
\filldraw  [ultra thick, fill=black] (8,8) circle [radius=0.25] ; \filldraw  [ultra thick, fill=black] (12,8) circle [radius=0.25] ;
\filldraw  [ultra thick, fill=black] (2,10) circle [radius=0.25] ; \filldraw  [ultra thick, fill=black] (6,10) circle [radius=0.25] ;
\filldraw  [ultra thick, fill=black] (10,10) circle [radius=0.25] ; \filldraw  [ultra thick, fill=black] (14,10) circle [radius=0.25] ;

\filldraw  [ultra thick, fill=white] (2,0) circle [radius=0.25] ; \filldraw  [ultra thick, fill=white] (6,0) circle [radius=0.25] ;
\filldraw  [ultra thick, fill=white] (10,0) circle [radius=0.25] ; \filldraw  [ultra thick, fill=white] (14,0) circle [radius=0.25] ;
\filldraw  [ultra thick, fill=white] (0,2) circle [radius=0.25] ; \filldraw  [ultra thick, fill=white] (4,2) circle [radius=0.25] ;
\filldraw  [ultra thick, fill=white] (8,2) circle [radius=0.25] ; \filldraw  [ultra thick, fill=white] (12,2) circle [radius=0.25] ;

\filldraw  [ultra thick, fill=white] (2,4) circle [radius=0.25] ; \filldraw  [ultra thick, fill=white] (6,4) circle [radius=0.25] ;
\filldraw  [ultra thick, fill=white] (10,4) circle [radius=0.25] ; \filldraw  [ultra thick, fill=white] (14,4) circle [radius=0.25] ;
\filldraw  [ultra thick, fill=white] (0,6) circle [radius=0.25] ; \filldraw  [ultra thick, fill=white] (4,6) circle [radius=0.25] ;
\filldraw  [ultra thick, fill=white] (8,6) circle [radius=0.25] ; \filldraw  [ultra thick, fill=white] (12,6) circle [radius=0.25] ;

\filldraw  [ultra thick, fill=white] (2,8) circle [radius=0.25] ; \filldraw  [ultra thick, fill=white] (6,8) circle [radius=0.25] ;
\filldraw  [ultra thick, fill=white] (10,8) circle [radius=0.25] ; \filldraw  [ultra thick, fill=white] (14,8) circle [radius=0.25] ;
\filldraw  [ultra thick, fill=white] (0,10) circle [radius=0.25] ; \filldraw  [ultra thick, fill=white] (4,10) circle [radius=0.25] ;
\filldraw  [ultra thick, fill=white] (8,10) circle [radius=0.25] ; \filldraw  [ultra thick, fill=white] (12,10) circle [radius=0.25] ;

\draw[->, line width=0.25cm, rounded corners, color=red] (W4)--(B8)--(W8)--(B12)--(W11)--(B15)--(W15)--(B19)--(W18)--(B22)--(W22) ;
\draw[->, line width=0.25cm, rounded corners, color=red] (B4)--(W3)--(B7)--(W7)--(B11)--(W10)--(B14)--(W14)--(B18)--(W17)--(B21)--(W21); 
\draw[->, line width=0.25cm, rounded corners, color=red] (B3)--(W2)--(B6)--(W6)--(B10)--(W9)--(B13)--(W13)--(B17); 
\draw[->, line width=0.25cm, rounded corners, color=red] (B2)--(W1)--(B5)--(W5)--(B9); 
\draw[->, line width=0.25cm, rounded corners, color=red] (W12)--(B16)--(W16)--(B20)--(W19)--(B23)--(W23); 
\draw[->, line width=0.25cm, rounded corners, color=red] (W20)--(B24)--(W24);
\end{tikzpicture}
} }; 

\node (ZZ3) at (0,-4) 
{\scalebox{0.3}{
\begin{tikzpicture}
\coordinate (B1) at (0,0); \coordinate (B2) at (4,0); \coordinate (B3) at (8,0); \coordinate (B4) at (12,0); 
\coordinate (B5) at (2,2); \coordinate (B6) at (6,2); \coordinate (B7) at (10,2); \coordinate (B8) at (14,2); 
\coordinate (B9) at (0,4); \coordinate (B10) at (4,4); \coordinate (B11) at (8,4); \coordinate (B12) at (12,4); 
\coordinate (B13) at (2,6); \coordinate (B14) at (6,6); \coordinate (B15) at (10,6); \coordinate (B16) at (14,6);
\coordinate (B17) at (0,8); \coordinate (B18) at (4,8); \coordinate (B19) at (8,8); \coordinate (B20) at (12,8); 
\coordinate (B21) at (2,10); \coordinate (B22) at (6,10); \coordinate (B23) at (10,10); \coordinate (B24) at (14,10);

\coordinate (W1) at (2,0); \coordinate (W2) at (6,0); \coordinate (W3) at (10,0); \coordinate (W4) at (14,0); 
\coordinate (W5) at (0,2); \coordinate (W6) at (4,2); \coordinate (W7) at (8,2); \coordinate (W8) at (12,2);
\coordinate (W9) at (2,4); \coordinate (W10) at (6,4); \coordinate (W11) at (10,4); \coordinate (W12) at (14,4); 
\coordinate (W13) at (0,6); \coordinate (W14) at (4,6); \coordinate (W15) at (8,6); \coordinate (W16) at (12,6);
\coordinate (W17) at (2,8); \coordinate (W18) at (6,8); \coordinate (W19) at (10,8); \coordinate (W20) at (14,8); 
\coordinate (W21) at (0,10); \coordinate (W22) at (4,10); \coordinate (W23) at (8,10); \coordinate (W24) at (12,10);

\draw[line width=0.06cm]  (B1)--(W1)--(B5)--(W5)--(B1); \draw[line width=0.06cm]  (W1)--(B2)--(W6)--(B5)--(W1);
\draw[line width=0.06cm]  (B2)--(W2)--(B6)--(W6)--(B2); \draw[line width=0.06cm]  (W2)--(B3)--(W7)--(B6)--(W2);
\draw[line width=0.06cm]  (B3)--(W3)--(B7)--(W7)--(B3); \draw[line width=0.06cm]  (W3)--(B4)--(W8)--(B7)--(W3);
\draw[line width=0.06cm]  (B4)--(W4)--(B8)--(W8)--(B4); 

\draw[line width=0.06cm]  (W5)--(B5)--(W9)--(B9)--(W5); \draw[line width=0.06cm]  (B5)--(W6)--(B10)--(W9)--(B5); 
\draw[line width=0.06cm]  (W6)--(B6)--(W10)--(B10)--(W6); \draw[line width=0.06cm]  (B6)--(W7)--(B11)--(W10)--(B6); 
\draw[line width=0.06cm]  (W7)--(B7)--(W11)--(B11)--(W7); \draw[line width=0.06cm]  (B7)--(W8)--(B12)--(W11)--(B7); 
\draw[line width=0.06cm]  (W8)--(B8)--(W12)--(B12)--(W8); 

\draw[line width=0.06cm]  (B9)--(W9)--(B13)--(W13)--(B9); \draw[line width=0.06cm]  (W9)--(B10)--(W14)--(B13)--(W9);
\draw[line width=0.06cm]  (B10)--(W10)--(B14)--(W14)--(B10); \draw[line width=0.06cm]  (W10)--(B11)--(W15)--(B14)--(W10);
\draw[line width=0.06cm]  (B11)--(W11)--(B15)--(W15)--(B11); \draw[line width=0.06cm]  (W11)--(B12)--(W16)--(B15)--(W11);
\draw[line width=0.06cm]  (B12)--(W12)--(B16)--(W16)--(B12); 

\draw[line width=0.06cm]  (W13)--(B13)--(W17)--(B17)--(W13); \draw[line width=0.06cm]  (B13)--(W14)--(B18)--(W17)--(B13); 
\draw[line width=0.06cm]  (W14)--(B14)--(W18)--(B18)--(W14); \draw[line width=0.06cm]  (B14)--(W15)--(B19)--(W18)--(B14); 
\draw[line width=0.06cm]  (W15)--(B15)--(W19)--(B19)--(W15); \draw[line width=0.06cm]  (B15)--(W16)--(B20)--(W19)--(B15); 
\draw[line width=0.06cm]  (W16)--(B16)--(W20)--(B20)--(W16); 

\draw[line width=0.06cm]  (B17)--(W17)--(B21)--(W21)--(B17); \draw[line width=0.06cm]  (W17)--(B18)--(W22)--(B21)--(W17);
\draw[line width=0.06cm]  (B18)--(W18)--(B22)--(W22)--(B18); \draw[line width=0.06cm]  (W18)--(B19)--(W23)--(B22)--(W18);
\draw[line width=0.06cm]  (B19)--(W19)--(B23)--(W23)--(B19); \draw[line width=0.06cm]  (W19)--(B20)--(W24)--(B23)--(W19);
\draw[line width=0.06cm]  (B20)--(W20)--(B24)--(W24)--(B20); 

\filldraw  [ultra thick, fill=black] (0,0) circle [radius=0.25] ; \filldraw  [ultra thick, fill=black] (4,0) circle [radius=0.25] ;
\filldraw  [ultra thick, fill=black] (8,0) circle [radius=0.25] ; \filldraw  [ultra thick, fill=black] (12,0) circle [radius=0.25] ; 
\filldraw  [ultra thick, fill=black] (2,2) circle [radius=0.25] ; \filldraw  [ultra thick, fill=black] (6,2) circle [radius=0.25] ;
\filldraw  [ultra thick, fill=black] (10,2) circle [radius=0.25] ; \filldraw  [ultra thick, fill=black] (14,2) circle [radius=0.25] ;

\filldraw  [ultra thick, fill=black] (0,4) circle [radius=0.25] ; \filldraw  [ultra thick, fill=black] (4,4) circle [radius=0.25] ;
\filldraw  [ultra thick, fill=black] (8,4) circle [radius=0.25] ; \filldraw  [ultra thick, fill=black] (12,4) circle [radius=0.25] ;
\filldraw  [ultra thick, fill=black] (2,6) circle [radius=0.25] ; \filldraw  [ultra thick, fill=black] (6,6) circle [radius=0.25] ;
\filldraw  [ultra thick, fill=black] (10,6) circle [radius=0.25] ; \filldraw  [ultra thick, fill=black] (14,6) circle [radius=0.25] ;

\filldraw  [ultra thick, fill=black] (0,8) circle [radius=0.25] ; \filldraw  [ultra thick, fill=black] (4,8) circle [radius=0.25] ;
\filldraw  [ultra thick, fill=black] (8,8) circle [radius=0.25] ; \filldraw  [ultra thick, fill=black] (12,8) circle [radius=0.25] ;
\filldraw  [ultra thick, fill=black] (2,10) circle [radius=0.25] ; \filldraw  [ultra thick, fill=black] (6,10) circle [radius=0.25] ;
\filldraw  [ultra thick, fill=black] (10,10) circle [radius=0.25] ; \filldraw  [ultra thick, fill=black] (14,10) circle [radius=0.25] ;

\filldraw  [ultra thick, fill=white] (2,0) circle [radius=0.25] ; \filldraw  [ultra thick, fill=white] (6,0) circle [radius=0.25] ;
\filldraw  [ultra thick, fill=white] (10,0) circle [radius=0.25] ; \filldraw  [ultra thick, fill=white] (14,0) circle [radius=0.25] ;
\filldraw  [ultra thick, fill=white] (0,2) circle [radius=0.25] ; \filldraw  [ultra thick, fill=white] (4,2) circle [radius=0.25] ;
\filldraw  [ultra thick, fill=white] (8,2) circle [radius=0.25] ; \filldraw  [ultra thick, fill=white] (12,2) circle [radius=0.25] ;

\filldraw  [ultra thick, fill=white] (2,4) circle [radius=0.25] ; \filldraw  [ultra thick, fill=white] (6,4) circle [radius=0.25] ;
\filldraw  [ultra thick, fill=white] (10,4) circle [radius=0.25] ; \filldraw  [ultra thick, fill=white] (14,4) circle [radius=0.25] ;
\filldraw  [ultra thick, fill=white] (0,6) circle [radius=0.25] ; \filldraw  [ultra thick, fill=white] (4,6) circle [radius=0.25] ;
\filldraw  [ultra thick, fill=white] (8,6) circle [radius=0.25] ; \filldraw  [ultra thick, fill=white] (12,6) circle [radius=0.25] ;

\filldraw  [ultra thick, fill=white] (2,8) circle [radius=0.25] ; \filldraw  [ultra thick, fill=white] (6,8) circle [radius=0.25] ;
\filldraw  [ultra thick, fill=white] (10,8) circle [radius=0.25] ; \filldraw  [ultra thick, fill=white] (14,8) circle [radius=0.25] ;
\filldraw  [ultra thick, fill=white] (0,10) circle [radius=0.25] ; \filldraw  [ultra thick, fill=white] (4,10) circle [radius=0.25] ;
\filldraw  [ultra thick, fill=white] (8,10) circle [radius=0.25] ; \filldraw  [ultra thick, fill=white] (12,10) circle [radius=0.25] ;

\draw[->, line width=0.25cm, rounded corners, color=red] (W24)--(B23)--(W19)--(B19)--(W15)--(B14)--(W10)--(B10)--(W6)--(B5)--(W1)--(B1) ;
\draw[->, line width=0.25cm, rounded corners, color=red] (W23)--(B22)--(W18)--(B18)--(W14)--(B13)--(W9)--(B9)--(W5) ;
\draw[->, line width=0.25cm, rounded corners, color=red] (W22)--(B21)--(W17)--(B17)--(W13) ;
\draw[->, line width=0.25cm, rounded corners, color=red] (B24)--(W20)--(B20)--(W16)--(B15)--(W11)--(B11)--(W7)--(B6)--(W2)--(B2) ;
\draw[->, line width=0.25cm, rounded corners, color=red] (B16)--(W12)--(B12)--(W8)--(B7)--(W3)--(B3) ; 
\draw[->, line width=0.25cm, rounded corners, color=red] (B8)--(W4)--(B4) ;
\end{tikzpicture}
} }; 

\node (ZZ4) at (6,-4) 
{\scalebox{0.3}{
\begin{tikzpicture}
\coordinate (B1) at (0,0); \coordinate (B2) at (4,0); \coordinate (B3) at (8,0); \coordinate (B4) at (12,0); 
\coordinate (B5) at (2,2); \coordinate (B6) at (6,2); \coordinate (B7) at (10,2); \coordinate (B8) at (14,2); 
\coordinate (B9) at (0,4); \coordinate (B10) at (4,4); \coordinate (B11) at (8,4); \coordinate (B12) at (12,4); 
\coordinate (B13) at (2,6); \coordinate (B14) at (6,6); \coordinate (B15) at (10,6); \coordinate (B16) at (14,6);
\coordinate (B17) at (0,8); \coordinate (B18) at (4,8); \coordinate (B19) at (8,8); \coordinate (B20) at (12,8); 
\coordinate (B21) at (2,10); \coordinate (B22) at (6,10); \coordinate (B23) at (10,10); \coordinate (B24) at (14,10);

\coordinate (W1) at (2,0); \coordinate (W2) at (6,0); \coordinate (W3) at (10,0); \coordinate (W4) at (14,0); 
\coordinate (W5) at (0,2); \coordinate (W6) at (4,2); \coordinate (W7) at (8,2); \coordinate (W8) at (12,2);
\coordinate (W9) at (2,4); \coordinate (W10) at (6,4); \coordinate (W11) at (10,4); \coordinate (W12) at (14,4); 
\coordinate (W13) at (0,6); \coordinate (W14) at (4,6); \coordinate (W15) at (8,6); \coordinate (W16) at (12,6);
\coordinate (W17) at (2,8); \coordinate (W18) at (6,8); \coordinate (W19) at (10,8); \coordinate (W20) at (14,8); 
\coordinate (W21) at (0,10); \coordinate (W22) at (4,10); \coordinate (W23) at (8,10); \coordinate (W24) at (12,10);

\draw[line width=0.06cm]  (B1)--(W1)--(B5)--(W5)--(B1); \draw[line width=0.06cm]  (W1)--(B2)--(W6)--(B5)--(W1);
\draw[line width=0.06cm]  (B2)--(W2)--(B6)--(W6)--(B2); \draw[line width=0.06cm]  (W2)--(B3)--(W7)--(B6)--(W2);
\draw[line width=0.06cm]  (B3)--(W3)--(B7)--(W7)--(B3); \draw[line width=0.06cm]  (W3)--(B4)--(W8)--(B7)--(W3);
\draw[line width=0.06cm]  (B4)--(W4)--(B8)--(W8)--(B4); 

\draw[line width=0.06cm]  (W5)--(B5)--(W9)--(B9)--(W5); \draw[line width=0.06cm]  (B5)--(W6)--(B10)--(W9)--(B5); 
\draw[line width=0.06cm]  (W6)--(B6)--(W10)--(B10)--(W6); \draw[line width=0.06cm]  (B6)--(W7)--(B11)--(W10)--(B6); 
\draw[line width=0.06cm]  (W7)--(B7)--(W11)--(B11)--(W7); \draw[line width=0.06cm]  (B7)--(W8)--(B12)--(W11)--(B7); 
\draw[line width=0.06cm]  (W8)--(B8)--(W12)--(B12)--(W8); 

\draw[line width=0.06cm]  (B9)--(W9)--(B13)--(W13)--(B9); \draw[line width=0.06cm]  (W9)--(B10)--(W14)--(B13)--(W9);
\draw[line width=0.06cm]  (B10)--(W10)--(B14)--(W14)--(B10); \draw[line width=0.06cm]  (W10)--(B11)--(W15)--(B14)--(W10);
\draw[line width=0.06cm]  (B11)--(W11)--(B15)--(W15)--(B11); \draw[line width=0.06cm]  (W11)--(B12)--(W16)--(B15)--(W11);
\draw[line width=0.06cm]  (B12)--(W12)--(B16)--(W16)--(B12); 

\draw[line width=0.06cm]  (W13)--(B13)--(W17)--(B17)--(W13); \draw[line width=0.06cm]  (B13)--(W14)--(B18)--(W17)--(B13); 
\draw[line width=0.06cm]  (W14)--(B14)--(W18)--(B18)--(W14); \draw[line width=0.06cm]  (B14)--(W15)--(B19)--(W18)--(B14); 
\draw[line width=0.06cm]  (W15)--(B15)--(W19)--(B19)--(W15); \draw[line width=0.06cm]  (B15)--(W16)--(B20)--(W19)--(B15); 
\draw[line width=0.06cm]  (W16)--(B16)--(W20)--(B20)--(W16); 

\draw[line width=0.06cm]  (B17)--(W17)--(B21)--(W21)--(B17); \draw[line width=0.06cm]  (W17)--(B18)--(W22)--(B21)--(W17);
\draw[line width=0.06cm]  (B18)--(W18)--(B22)--(W22)--(B18); \draw[line width=0.06cm]  (W18)--(B19)--(W23)--(B22)--(W18);
\draw[line width=0.06cm]  (B19)--(W19)--(B23)--(W23)--(B19); \draw[line width=0.06cm]  (W19)--(B20)--(W24)--(B23)--(W19);
\draw[line width=0.06cm]  (B20)--(W20)--(B24)--(W24)--(B20); 

\filldraw  [ultra thick, fill=black] (0,0) circle [radius=0.25] ; \filldraw  [ultra thick, fill=black] (4,0) circle [radius=0.25] ;
\filldraw  [ultra thick, fill=black] (8,0) circle [radius=0.25] ; \filldraw  [ultra thick, fill=black] (12,0) circle [radius=0.25] ; 
\filldraw  [ultra thick, fill=black] (2,2) circle [radius=0.25] ; \filldraw  [ultra thick, fill=black] (6,2) circle [radius=0.25] ;
\filldraw  [ultra thick, fill=black] (10,2) circle [radius=0.25] ; \filldraw  [ultra thick, fill=black] (14,2) circle [radius=0.25] ;

\filldraw  [ultra thick, fill=black] (0,4) circle [radius=0.25] ; \filldraw  [ultra thick, fill=black] (4,4) circle [radius=0.25] ;
\filldraw  [ultra thick, fill=black] (8,4) circle [radius=0.25] ; \filldraw  [ultra thick, fill=black] (12,4) circle [radius=0.25] ;
\filldraw  [ultra thick, fill=black] (2,6) circle [radius=0.25] ; \filldraw  [ultra thick, fill=black] (6,6) circle [radius=0.25] ;
\filldraw  [ultra thick, fill=black] (10,6) circle [radius=0.25] ; \filldraw  [ultra thick, fill=black] (14,6) circle [radius=0.25] ;

\filldraw  [ultra thick, fill=black] (0,8) circle [radius=0.25] ; \filldraw  [ultra thick, fill=black] (4,8) circle [radius=0.25] ;
\filldraw  [ultra thick, fill=black] (8,8) circle [radius=0.25] ; \filldraw  [ultra thick, fill=black] (12,8) circle [radius=0.25] ;
\filldraw  [ultra thick, fill=black] (2,10) circle [radius=0.25] ; \filldraw  [ultra thick, fill=black] (6,10) circle [radius=0.25] ;
\filldraw  [ultra thick, fill=black] (10,10) circle [radius=0.25] ; \filldraw  [ultra thick, fill=black] (14,10) circle [radius=0.25] ;

\filldraw  [ultra thick, fill=white] (2,0) circle [radius=0.25] ; \filldraw  [ultra thick, fill=white] (6,0) circle [radius=0.25] ;
\filldraw  [ultra thick, fill=white] (10,0) circle [radius=0.25] ; \filldraw  [ultra thick, fill=white] (14,0) circle [radius=0.25] ;
\filldraw  [ultra thick, fill=white] (0,2) circle [radius=0.25] ; \filldraw  [ultra thick, fill=white] (4,2) circle [radius=0.25] ;
\filldraw  [ultra thick, fill=white] (8,2) circle [radius=0.25] ; \filldraw  [ultra thick, fill=white] (12,2) circle [radius=0.25] ;

\filldraw  [ultra thick, fill=white] (2,4) circle [radius=0.25] ; \filldraw  [ultra thick, fill=white] (6,4) circle [radius=0.25] ;
\filldraw  [ultra thick, fill=white] (10,4) circle [radius=0.25] ; \filldraw  [ultra thick, fill=white] (14,4) circle [radius=0.25] ;
\filldraw  [ultra thick, fill=white] (0,6) circle [radius=0.25] ; \filldraw  [ultra thick, fill=white] (4,6) circle [radius=0.25] ;
\filldraw  [ultra thick, fill=white] (8,6) circle [radius=0.25] ; \filldraw  [ultra thick, fill=white] (12,6) circle [radius=0.25] ;

\filldraw  [ultra thick, fill=white] (2,8) circle [radius=0.25] ; \filldraw  [ultra thick, fill=white] (6,8) circle [radius=0.25] ;
\filldraw  [ultra thick, fill=white] (10,8) circle [radius=0.25] ; \filldraw  [ultra thick, fill=white] (14,8) circle [radius=0.25] ;
\filldraw  [ultra thick, fill=white] (0,10) circle [radius=0.25] ; \filldraw  [ultra thick, fill=white] (4,10) circle [radius=0.25] ;
\filldraw  [ultra thick, fill=white] (8,10) circle [radius=0.25] ; \filldraw  [ultra thick, fill=white] (12,10) circle [radius=0.25] ;

\draw[->, line width=0.25cm, rounded corners, color=red] (B21)--(W22)--(B18)--(W18)--(B14)--(W15)--(B11)--(W11)--(B7)--(W8)--(B4)--(W4) ;
\draw[->, line width=0.25cm, rounded corners, color=red] (B22)--(W23)--(B19)--(W19)--(B15)--(W16)--(B12)--(W12)--(B8) ;
\draw[->, line width=0.25cm, rounded corners, color=red] (B23)--(W24)--(B20)--(W20)--(B16) ;
\draw[->, line width=0.25cm, rounded corners, color=red] (W21)--(B17)--(W17)--(B13)--(W14)--(B10)--(W10)--(B6)--(W7)--(B3)--(W3) ;
\draw[->, line width=0.25cm, rounded corners, color=red] (W13)--(B9)--(W9)--(B5)--(W6)--(B2)--(W2) ;
\draw[->, line width=0.25cm, rounded corners, color=red] (W5)--(B1)--(W1) ;
\end{tikzpicture}
} }; 
\end{tikzpicture}
}}
\caption{Zigzag paths on a square dimer model}
\label{zig_zag_square}
\end{center}
\end{figure}
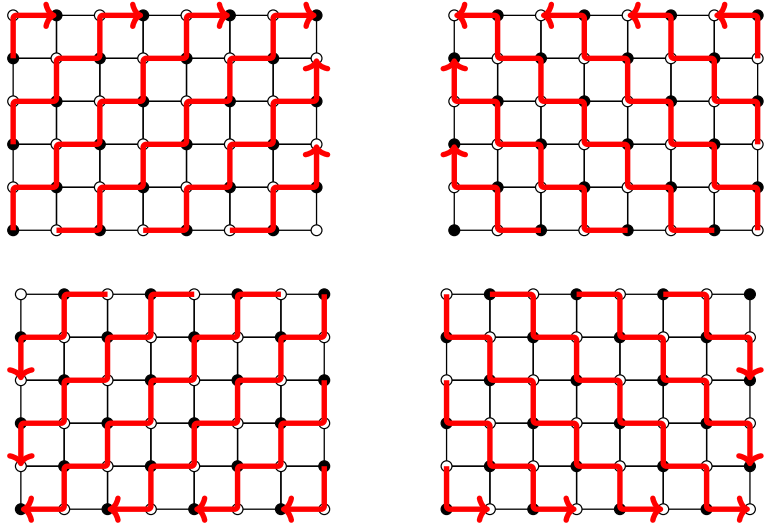

Let $(Q,W_Q)$ be the QP $(Q,W_Q)$ associated with $\Gamma$. By Theorem~\ref{NCCR1}, an MCM $R$-module 
\[ e_i\calP(Q, W_Q)\cong\Hom_R(T_{ii}, \bigoplus_{j\in Q_0}T_{ij})\cong\bigoplus_{j\in Q_0}T_{ij}\] 
gives an NCCR of $R$ for all $i\in Q_0$. 
Since we know that the toric diagram $\Delta$ of $R$ is a quadrangle, 
let $u_1,\cdots,u_4\in\ZZ^2$ be vertices of $\Delta$, and we assume that these are ordered cyclically along $\Delta$. 
Since $\Gamma$ is consistent, there exists a unique perfect matching, which is called extremal, corresponding to each vertex. 
We denote extremal perfect matchings corresponding to $u_1,\cdots,u_4$ by $\sfP_1,\cdots, \sfP_4$ respectively. 
Here, we recall that each module $T_{ij}$ can be constructed from the perfect matching functions of extremal ones (see subsection~\ref{sec_consist}). 
In our situation, extremal perfect matchings $\sfP_1,\cdots,\sfP_4$ are of the form shown in Figure~\ref{expm_square}, 
because differences of adjacent extremal perfect matchings induce zigzag paths. 

\begin{figure}[h!]
\begin{center}
{\scalebox{0.95}{
\begin{tikzpicture}
\node at (0,-2) {$\sfP_1$};\node at (6,-2) {$\sfP_2$}; 
\node at (0,-6.2) {$\sfP_3$}; \node at (6,-6.2) {$\sfP_4$}; 

\node (PM1) at (0,0) 
{\scalebox{0.3}{
\begin{tikzpicture}
\node (B1) at (0,0){$$}; \node (B2) at (4,0){$$}; \node (B3) at (8,0){$$}; \node (B4) at (12,0){$$}; 
\node (B5) at (2,2){$$}; \node (B6) at (6,2){$$}; \node (B7) at (10,2){$$}; \node (B8) at (14,2){$$}; 
\node (B9) at (0,4){$$}; \node (B10) at (4,4){$$}; \node (B11) at (8,4){$$}; \node (B12) at (12,4){$$}; 
\node (B13) at (2,6){$$}; \node (B14) at (6,6){$$}; \node (B15) at (10,6){$$}; \node (B16) at (14,6){$$};
\node (B17) at (0,8){$$}; \node (B18) at (4,8){$$}; \node (B19) at (8,8){$$}; \node (B20) at (12,8){$$}; 
\node (B21) at (2,10){$$}; \node (B22) at (6,10){$$}; \node (B23) at (10,10){$$}; \node (B24) at (14,10){$$};

\node (W1) at (2,0){$$}; \node (W2) at (6,0){$$}; \node (W3) at (10,0){$$}; \node (W4) at (14,0){$$}; 
\node (W5) at (0,2){$$}; \node (W6) at (4,2){$$}; \node (W7) at (8,2){$$}; \node (W8) at (12,2){$$};
\node (W9) at (2,4){$$}; \node (W10) at (6,4){$$}; \node (W11) at (10,4){$$}; \node (W12) at (14,4){$$}; 
\node (W13) at (0,6){$$}; \node (W14) at (4,6){$$}; \node (W15) at (8,6){$$}; \node (W16) at (12,6){$$};
\node (W17) at (2,8){$$}; \node (W18) at (6,8){$$}; \node (W19) at (10,8){$$}; \node (W20) at (14,8){$$}; 
\node (W21) at (0,10){$$}; \node (W22) at (4,10){$$}; \node (W23) at (8,10){$$}; \node (W24) at (12,10){$$};

\draw[line width=0.06cm]  (B1)--(W1)--(B5)--(W5)--(B1); \draw[line width=0.06cm]  (W1)--(B2)--(W6)--(B5)--(W1);
\draw[line width=0.06cm]  (B2)--(W2)--(B6)--(W6)--(B2); \draw[line width=0.06cm]  (W2)--(B3)--(W7)--(B6)--(W2);
\draw[line width=0.06cm]  (B3)--(W3)--(B7)--(W7)--(B3); \draw[line width=0.06cm]  (W3)--(B4)--(W8)--(B7)--(W3);
\draw[line width=0.06cm]  (B4)--(W4)--(B8)--(W8)--(B4); 

\draw[line width=0.06cm]  (W5)--(B5)--(W9)--(B9)--(W5); \draw[line width=0.06cm]  (B5)--(W6)--(B10)--(W9)--(B5); 
\draw[line width=0.06cm]  (W6)--(B6)--(W10)--(B10)--(W6); \draw[line width=0.06cm]  (B6)--(W7)--(B11)--(W10)--(B6); 
\draw[line width=0.06cm]  (W7)--(B7)--(W11)--(B11)--(W7); \draw[line width=0.06cm]  (B7)--(W8)--(B12)--(W11)--(B7); 
\draw[line width=0.06cm]  (W8)--(B8)--(W12)--(B12)--(W8); 

\draw[line width=0.06cm]  (B9)--(W9)--(B13)--(W13)--(B9); \draw[line width=0.06cm]  (W9)--(B10)--(W14)--(B13)--(W9);
\draw[line width=0.06cm]  (B10)--(W10)--(B14)--(W14)--(B10); \draw[line width=0.06cm]  (W10)--(B11)--(W15)--(B14)--(W10);
\draw[line width=0.06cm]  (B11)--(W11)--(B15)--(W15)--(B11); \draw[line width=0.06cm]  (W11)--(B12)--(W16)--(B15)--(W11);
\draw[line width=0.06cm]  (B12)--(W12)--(B16)--(W16)--(B12); 

\draw[line width=0.06cm]  (W13)--(B13)--(W17)--(B17)--(W13); \draw[line width=0.06cm]  (B13)--(W14)--(B18)--(W17)--(B13); 
\draw[line width=0.06cm]  (W14)--(B14)--(W18)--(B18)--(W14); \draw[line width=0.06cm]  (B14)--(W15)--(B19)--(W18)--(B14); 
\draw[line width=0.06cm]  (W15)--(B15)--(W19)--(B19)--(W15); \draw[line width=0.06cm]  (B15)--(W16)--(B20)--(W19)--(B15); 
\draw[line width=0.06cm]  (W16)--(B16)--(W20)--(B20)--(W16); 

\draw[line width=0.06cm]  (B17)--(W17)--(B21)--(W21)--(B17); \draw[line width=0.06cm]  (W17)--(B18)--(W22)--(B21)--(W17);
\draw[line width=0.06cm]  (B18)--(W18)--(B22)--(W22)--(B18); \draw[line width=0.06cm]  (W18)--(B19)--(W23)--(B22)--(W18);
\draw[line width=0.06cm]  (B19)--(W19)--(B23)--(W23)--(B19); \draw[line width=0.06cm]  (W19)--(B20)--(W24)--(B23)--(W19);
\draw[line width=0.06cm]  (B20)--(W20)--(B24)--(W24)--(B20); 

\draw[line width=0.4cm,color=blue] (B2)--(W1); \draw[line width=0.4cm,color=blue] (B3)--(W2); 
\draw[line width=0.4cm,color=blue] (B4)--(W3); \draw[line width=0.4cm,color=blue] (B5)--(W5); 
\draw[line width=0.4cm,color=blue] (B6)--(W6); \draw[line width=0.4cm,color=blue] (B7)--(W7); \draw[line width=0.4cm,color=blue] (B8)--(W8); 
\draw[line width=0.4cm,color=blue] (B10)--(W9); \draw[line width=0.4cm,color=blue] (B11)--(W10); 
\draw[line width=0.4cm,color=blue] (B12)--(W11); \draw[line width=0.4cm,color=blue] (B13)--(W13); 
\draw[line width=0.4cm,color=blue] (B14)--(W14); \draw[line width=0.4cm,color=blue] (B15)--(W15);  
\draw[line width=0.4cm,color=blue] (B16)--(W16); \draw[line width=0.4cm,color=blue] (B18)--(W17); 
\draw[line width=0.4cm,color=blue] (B19)--(W18); \draw[line width=0.4cm,color=blue] (B20)--(W19); 
\draw[line width=0.4cm,color=blue] (B21)--(W21); \draw[line width=0.4cm,color=blue] (B22)--(W22); 
\draw[line width=0.4cm,color=blue] (B23)--(W23); \draw[line width=0.4cm,color=blue] (B24)--(W24); 

\filldraw  [ultra thick, fill=black] (0,0) circle [radius=0.25] ; \filldraw  [ultra thick, fill=black] (4,0) circle [radius=0.25] ;
\filldraw  [ultra thick, fill=black] (8,0) circle [radius=0.25] ; \filldraw  [ultra thick, fill=black] (12,0) circle [radius=0.25] ; 
\filldraw  [ultra thick, fill=black] (2,2) circle [radius=0.25] ; \filldraw  [ultra thick, fill=black] (6,2) circle [radius=0.25] ;
\filldraw  [ultra thick, fill=black] (10,2) circle [radius=0.25] ; \filldraw  [ultra thick, fill=black] (14,2) circle [radius=0.25] ;

\filldraw  [ultra thick, fill=black] (0,4) circle [radius=0.25] ; \filldraw  [ultra thick, fill=black] (4,4) circle [radius=0.25] ;
\filldraw  [ultra thick, fill=black] (8,4) circle [radius=0.25] ; \filldraw  [ultra thick, fill=black] (12,4) circle [radius=0.25] ;
\filldraw  [ultra thick, fill=black] (2,6) circle [radius=0.25] ; \filldraw  [ultra thick, fill=black] (6,6) circle [radius=0.25] ;
\filldraw  [ultra thick, fill=black] (10,6) circle [radius=0.25] ; \filldraw  [ultra thick, fill=black] (14,6) circle [radius=0.25] ;

\filldraw  [ultra thick, fill=black] (0,8) circle [radius=0.25] ; \filldraw  [ultra thick, fill=black] (4,8) circle [radius=0.25] ;
\filldraw  [ultra thick, fill=black] (8,8) circle [radius=0.25] ; \filldraw  [ultra thick, fill=black] (12,8) circle [radius=0.25] ;
\filldraw  [ultra thick, fill=black] (2,10) circle [radius=0.25] ; \filldraw  [ultra thick, fill=black] (6,10) circle [radius=0.25] ;
\filldraw  [ultra thick, fill=black] (10,10) circle [radius=0.25] ; \filldraw  [ultra thick, fill=black] (14,10) circle [radius=0.25] ;

\filldraw  [ultra thick, fill=white] (2,0) circle [radius=0.25] ; \filldraw  [ultra thick, fill=white] (6,0) circle [radius=0.25] ;
\filldraw  [ultra thick, fill=white] (10,0) circle [radius=0.25] ; \filldraw  [ultra thick, fill=white] (14,0) circle [radius=0.25] ;
\filldraw  [ultra thick, fill=white] (0,2) circle [radius=0.25] ; \filldraw  [ultra thick, fill=white] (4,2) circle [radius=0.25] ;
\filldraw  [ultra thick, fill=white] (8,2) circle [radius=0.25] ; \filldraw  [ultra thick, fill=white] (12,2) circle [radius=0.25] ;

\filldraw  [ultra thick, fill=white] (2,4) circle [radius=0.25] ; \filldraw  [ultra thick, fill=white] (6,4) circle [radius=0.25] ;
\filldraw  [ultra thick, fill=white] (10,4) circle [radius=0.25] ; \filldraw  [ultra thick, fill=white] (14,4) circle [radius=0.25] ;
\filldraw  [ultra thick, fill=white] (0,6) circle [radius=0.25] ; \filldraw  [ultra thick, fill=white] (4,6) circle [radius=0.25] ;
\filldraw  [ultra thick, fill=white] (8,6) circle [radius=0.25] ; \filldraw  [ultra thick, fill=white] (12,6) circle [radius=0.25] ;

\filldraw  [ultra thick, fill=white] (2,8) circle [radius=0.25] ; \filldraw  [ultra thick, fill=white] (6,8) circle [radius=0.25] ;
\filldraw  [ultra thick, fill=white] (10,8) circle [radius=0.25] ; \filldraw  [ultra thick, fill=white] (14,8) circle [radius=0.25] ;
\filldraw  [ultra thick, fill=white] (0,10) circle [radius=0.25] ; \filldraw  [ultra thick, fill=white] (4,10) circle [radius=0.25] ;
\filldraw  [ultra thick, fill=white] (8,10) circle [radius=0.25] ; \filldraw  [ultra thick, fill=white] (12,10) circle [radius=0.25] ;
\end{tikzpicture}
} }; 

\node (PM2) at (6,0) 
{\scalebox{0.3}{
\begin{tikzpicture}
\node (B1) at (0,0){$$}; \node (B2) at (4,0){$$}; \node (B3) at (8,0){$$}; \node (B4) at (12,0){$$}; 
\node (B5) at (2,2){$$}; \node (B6) at (6,2){$$}; \node (B7) at (10,2){$$}; \node (B8) at (14,2){$$}; 
\node (B9) at (0,4){$$}; \node (B10) at (4,4){$$}; \node (B11) at (8,4){$$}; \node (B12) at (12,4){$$}; 
\node (B13) at (2,6){$$}; \node (B14) at (6,6){$$}; \node (B15) at (10,6){$$}; \node (B16) at (14,6){$$};
\node (B17) at (0,8){$$}; \node (B18) at (4,8){$$}; \node (B19) at (8,8){$$}; \node (B20) at (12,8){$$}; 
\node (B21) at (2,10){$$}; \node (B22) at (6,10){$$}; \node (B23) at (10,10){$$}; \node (B24) at (14,10){$$};

\node (W1) at (2,0){$$}; \node (W2) at (6,0){$$}; \node (W3) at (10,0){$$}; \node (W4) at (14,0){$$}; 
\node (W5) at (0,2){$$}; \node (W6) at (4,2){$$}; \node (W7) at (8,2){$$}; \node (W8) at (12,2){$$};
\node (W9) at (2,4){$$}; \node (W10) at (6,4){$$}; \node (W11) at (10,4){$$}; \node (W12) at (14,4){$$}; 
\node (W13) at (0,6){$$}; \node (W14) at (4,6){$$}; \node (W15) at (8,6){$$}; \node (W16) at (12,6){$$};
\node (W17) at (2,8){$$}; \node (W18) at (6,8){$$}; \node (W19) at (10,8){$$}; \node (W20) at (14,8){$$}; 
\node (W21) at (0,10){$$}; \node (W22) at (4,10){$$}; \node (W23) at (8,10){$$}; \node (W24) at (12,10){$$};

\draw[line width=0.06cm]  (B1)--(W1)--(B5)--(W5)--(B1); \draw[line width=0.06cm]  (W1)--(B2)--(W6)--(B5)--(W1);
\draw[line width=0.06cm]  (B2)--(W2)--(B6)--(W6)--(B2); \draw[line width=0.06cm]  (W2)--(B3)--(W7)--(B6)--(W2);
\draw[line width=0.06cm]  (B3)--(W3)--(B7)--(W7)--(B3); \draw[line width=0.06cm]  (W3)--(B4)--(W8)--(B7)--(W3);
\draw[line width=0.06cm]  (B4)--(W4)--(B8)--(W8)--(B4); 

\draw[line width=0.06cm]  (W5)--(B5)--(W9)--(B9)--(W5); \draw[line width=0.06cm]  (B5)--(W6)--(B10)--(W9)--(B5); 
\draw[line width=0.06cm]  (W6)--(B6)--(W10)--(B10)--(W6); \draw[line width=0.06cm]  (B6)--(W7)--(B11)--(W10)--(B6); 
\draw[line width=0.06cm]  (W7)--(B7)--(W11)--(B11)--(W7); \draw[line width=0.06cm]  (B7)--(W8)--(B12)--(W11)--(B7); 
\draw[line width=0.06cm]  (W8)--(B8)--(W12)--(B12)--(W8); 

\draw[line width=0.06cm]  (B9)--(W9)--(B13)--(W13)--(B9); \draw[line width=0.06cm]  (W9)--(B10)--(W14)--(B13)--(W9);
\draw[line width=0.06cm]  (B10)--(W10)--(B14)--(W14)--(B10); \draw[line width=0.06cm]  (W10)--(B11)--(W15)--(B14)--(W10);
\draw[line width=0.06cm]  (B11)--(W11)--(B15)--(W15)--(B11); \draw[line width=0.06cm]  (W11)--(B12)--(W16)--(B15)--(W11);
\draw[line width=0.06cm]  (B12)--(W12)--(B16)--(W16)--(B12); 

\draw[line width=0.06cm]  (W13)--(B13)--(W17)--(B17)--(W13); \draw[line width=0.06cm]  (B13)--(W14)--(B18)--(W17)--(B13); 
\draw[line width=0.06cm]  (W14)--(B14)--(W18)--(B18)--(W14); \draw[line width=0.06cm]  (B14)--(W15)--(B19)--(W18)--(B14); 
\draw[line width=0.06cm]  (W15)--(B15)--(W19)--(B19)--(W15); \draw[line width=0.06cm]  (B15)--(W16)--(B20)--(W19)--(B15); 
\draw[line width=0.06cm]  (W16)--(B16)--(W20)--(B20)--(W16); 

\draw[line width=0.06cm]  (B17)--(W17)--(B21)--(W21)--(B17); \draw[line width=0.06cm]  (W17)--(B18)--(W22)--(B21)--(W17);
\draw[line width=0.06cm]  (B18)--(W18)--(B22)--(W22)--(B18); \draw[line width=0.06cm]  (W18)--(B19)--(W23)--(B22)--(W18);
\draw[line width=0.06cm]  (B19)--(W19)--(B23)--(W23)--(B19); \draw[line width=0.06cm]  (W19)--(B20)--(W24)--(B23)--(W19);
\draw[line width=0.06cm]  (B20)--(W20)--(B24)--(W24)--(B20); 

\draw[line width=0.4cm,color=blue] (W1)--(B5); \draw[line width=0.4cm,color=blue] (W2)--(B6); \draw[line width=0.4cm,color=blue] (W3)--(B7);  
\draw[line width=0.4cm,color=blue] (W4)--(B8); \draw[line width=0.4cm,color=blue] (W5)--(B9); \draw[line width=0.4cm,color=blue] (W6)--(B10); 
\draw[line width=0.4cm,color=blue] (W7)--(B11); \draw[line width=0.4cm,color=blue] (W8)--(B12); \draw[line width=0.4cm,color=blue] (W9)--(B13); 
\draw[line width=0.4cm,color=blue] (W10)--(B14); \draw[line width=0.4cm,color=blue] (W11)--(B15); \draw[line width=0.4cm,color=blue] (W12)--(B16); 
\draw[line width=0.4cm,color=blue] (W13)--(B17); \draw[line width=0.4cm,color=blue] (W14)--(B18); \draw[line width=0.4cm,color=blue] (W15)--(B19);
\draw[line width=0.4cm,color=blue] (W16)--(B20); \draw[line width=0.4cm,color=blue] (W17)--(B21);  \draw[line width=0.4cm,color=blue] (W18)--(B22); 
\draw[line width=0.4cm,color=blue] (W19)--(B23);  \draw[line width=0.4cm,color=blue] (W20)--(B24); 

\filldraw  [ultra thick, fill=black] (0,0) circle [radius=0.25] ; \filldraw  [ultra thick, fill=black] (4,0) circle [radius=0.25] ;
\filldraw  [ultra thick, fill=black] (8,0) circle [radius=0.25] ; \filldraw  [ultra thick, fill=black] (12,0) circle [radius=0.25] ; 
\filldraw  [ultra thick, fill=black] (2,2) circle [radius=0.25] ; \filldraw  [ultra thick, fill=black] (6,2) circle [radius=0.25] ;
\filldraw  [ultra thick, fill=black] (10,2) circle [radius=0.25] ; \filldraw  [ultra thick, fill=black] (14,2) circle [radius=0.25] ;

\filldraw  [ultra thick, fill=black] (0,4) circle [radius=0.25] ; \filldraw  [ultra thick, fill=black] (4,4) circle [radius=0.25] ;
\filldraw  [ultra thick, fill=black] (8,4) circle [radius=0.25] ; \filldraw  [ultra thick, fill=black] (12,4) circle [radius=0.25] ;
\filldraw  [ultra thick, fill=black] (2,6) circle [radius=0.25] ; \filldraw  [ultra thick, fill=black] (6,6) circle [radius=0.25] ;
\filldraw  [ultra thick, fill=black] (10,6) circle [radius=0.25] ; \filldraw  [ultra thick, fill=black] (14,6) circle [radius=0.25] ;

\filldraw  [ultra thick, fill=black] (0,8) circle [radius=0.25] ; \filldraw  [ultra thick, fill=black] (4,8) circle [radius=0.25] ;
\filldraw  [ultra thick, fill=black] (8,8) circle [radius=0.25] ; \filldraw  [ultra thick, fill=black] (12,8) circle [radius=0.25] ;
\filldraw  [ultra thick, fill=black] (2,10) circle [radius=0.25] ; \filldraw  [ultra thick, fill=black] (6,10) circle [radius=0.25] ;
\filldraw  [ultra thick, fill=black] (10,10) circle [radius=0.25] ; \filldraw  [ultra thick, fill=black] (14,10) circle [radius=0.25] ;

\filldraw  [ultra thick, fill=white] (2,0) circle [radius=0.25] ; \filldraw  [ultra thick, fill=white] (6,0) circle [radius=0.25] ;
\filldraw  [ultra thick, fill=white] (10,0) circle [radius=0.25] ; \filldraw  [ultra thick, fill=white] (14,0) circle [radius=0.25] ;
\filldraw  [ultra thick, fill=white] (0,2) circle [radius=0.25] ; \filldraw  [ultra thick, fill=white] (4,2) circle [radius=0.25] ;
\filldraw  [ultra thick, fill=white] (8,2) circle [radius=0.25] ; \filldraw  [ultra thick, fill=white] (12,2) circle [radius=0.25] ;

\filldraw  [ultra thick, fill=white] (2,4) circle [radius=0.25] ; \filldraw  [ultra thick, fill=white] (6,4) circle [radius=0.25] ;
\filldraw  [ultra thick, fill=white] (10,4) circle [radius=0.25] ; \filldraw  [ultra thick, fill=white] (14,4) circle [radius=0.25] ;
\filldraw  [ultra thick, fill=white] (0,6) circle [radius=0.25] ; \filldraw  [ultra thick, fill=white] (4,6) circle [radius=0.25] ;
\filldraw  [ultra thick, fill=white] (8,6) circle [radius=0.25] ; \filldraw  [ultra thick, fill=white] (12,6) circle [radius=0.25] ;

\filldraw  [ultra thick, fill=white] (2,8) circle [radius=0.25] ; \filldraw  [ultra thick, fill=white] (6,8) circle [radius=0.25] ;
\filldraw  [ultra thick, fill=white] (10,8) circle [radius=0.25] ; \filldraw  [ultra thick, fill=white] (14,8) circle [radius=0.25] ;
\filldraw  [ultra thick, fill=white] (0,10) circle [radius=0.25] ; \filldraw  [ultra thick, fill=white] (4,10) circle [radius=0.25] ;
\filldraw  [ultra thick, fill=white] (8,10) circle [radius=0.25] ; \filldraw  [ultra thick, fill=white] (12,10) circle [radius=0.25] ;
\end{tikzpicture}
} }; 

\node (PM3) at (0,-4.2) 
{\scalebox{0.3}{
\begin{tikzpicture}
\node (B1) at (0,0){$$}; \node (B2) at (4,0){$$}; \node (B3) at (8,0){$$}; \node (B4) at (12,0){$$}; 
\node (B5) at (2,2){$$}; \node (B6) at (6,2){$$}; \node (B7) at (10,2){$$}; \node (B8) at (14,2){$$}; 
\node (B9) at (0,4){$$}; \node (B10) at (4,4){$$}; \node (B11) at (8,4){$$}; \node (B12) at (12,4){$$}; 
\node (B13) at (2,6){$$}; \node (B14) at (6,6){$$}; \node (B15) at (10,6){$$}; \node (B16) at (14,6){$$};
\node (B17) at (0,8){$$}; \node (B18) at (4,8){$$}; \node (B19) at (8,8){$$}; \node (B20) at (12,8){$$}; 
\node (B21) at (2,10){$$}; \node (B22) at (6,10){$$}; \node (B23) at (10,10){$$}; \node (B24) at (14,10){$$};

\node (W1) at (2,0){$$}; \node (W2) at (6,0){$$}; \node (W3) at (10,0){$$}; \node (W4) at (14,0){$$}; 
\node (W5) at (0,2){$$}; \node (W6) at (4,2){$$}; \node (W7) at (8,2){$$}; \node (W8) at (12,2){$$};
\node (W9) at (2,4){$$}; \node (W10) at (6,4){$$}; \node (W11) at (10,4){$$}; \node (W12) at (14,4){$$}; 
\node (W13) at (0,6){$$}; \node (W14) at (4,6){$$}; \node (W15) at (8,6){$$}; \node (W16) at (12,6){$$};
\node (W17) at (2,8){$$}; \node (W18) at (6,8){$$}; \node (W19) at (10,8){$$}; \node (W20) at (14,8){$$}; 
\node (W21) at (0,10){$$}; \node (W22) at (4,10){$$}; \node (W23) at (8,10){$$}; \node (W24) at (12,10){$$};

\draw[line width=0.06cm]  (B1)--(W1)--(B5)--(W5)--(B1); \draw[line width=0.06cm]  (W1)--(B2)--(W6)--(B5)--(W1);
\draw[line width=0.06cm]  (B2)--(W2)--(B6)--(W6)--(B2); \draw[line width=0.06cm]  (W2)--(B3)--(W7)--(B6)--(W2);
\draw[line width=0.06cm]  (B3)--(W3)--(B7)--(W7)--(B3); \draw[line width=0.06cm]  (W3)--(B4)--(W8)--(B7)--(W3);
\draw[line width=0.06cm]  (B4)--(W4)--(B8)--(W8)--(B4); 

\draw[line width=0.06cm]  (W5)--(B5)--(W9)--(B9)--(W5); \draw[line width=0.06cm]  (B5)--(W6)--(B10)--(W9)--(B5); 
\draw[line width=0.06cm]  (W6)--(B6)--(W10)--(B10)--(W6); \draw[line width=0.06cm]  (B6)--(W7)--(B11)--(W10)--(B6); 
\draw[line width=0.06cm]  (W7)--(B7)--(W11)--(B11)--(W7); \draw[line width=0.06cm]  (B7)--(W8)--(B12)--(W11)--(B7); 
\draw[line width=0.06cm]  (W8)--(B8)--(W12)--(B12)--(W8); 

\draw[line width=0.06cm]  (B9)--(W9)--(B13)--(W13)--(B9); \draw[line width=0.06cm]  (W9)--(B10)--(W14)--(B13)--(W9);
\draw[line width=0.06cm]  (B10)--(W10)--(B14)--(W14)--(B10); \draw[line width=0.06cm]  (W10)--(B11)--(W15)--(B14)--(W10);
\draw[line width=0.06cm]  (B11)--(W11)--(B15)--(W15)--(B11); \draw[line width=0.06cm]  (W11)--(B12)--(W16)--(B15)--(W11);
\draw[line width=0.06cm]  (B12)--(W12)--(B16)--(W16)--(B12); 

\draw[line width=0.06cm]  (W13)--(B13)--(W17)--(B17)--(W13); \draw[line width=0.06cm]  (B13)--(W14)--(B18)--(W17)--(B13); 
\draw[line width=0.06cm]  (W14)--(B14)--(W18)--(B18)--(W14); \draw[line width=0.06cm]  (B14)--(W15)--(B19)--(W18)--(B14); 
\draw[line width=0.06cm]  (W15)--(B15)--(W19)--(B19)--(W15); \draw[line width=0.06cm]  (B15)--(W16)--(B20)--(W19)--(B15); 
\draw[line width=0.06cm]  (W16)--(B16)--(W20)--(B20)--(W16); 

\draw[line width=0.06cm]  (B17)--(W17)--(B21)--(W21)--(B17); \draw[line width=0.06cm]  (W17)--(B18)--(W22)--(B21)--(W17);
\draw[line width=0.06cm]  (B18)--(W18)--(B22)--(W22)--(B18); \draw[line width=0.06cm]  (W18)--(B19)--(W23)--(B22)--(W18);
\draw[line width=0.06cm]  (B19)--(W19)--(B23)--(W23)--(B19); \draw[line width=0.06cm]  (W19)--(B20)--(W24)--(B23)--(W19);
\draw[line width=0.06cm]  (B20)--(W20)--(B24)--(W24)--(B20); 

\draw[line width=0.4cm,color=blue] (B1)--(W1); \draw[line width=0.4cm,color=blue] (B2)--(W2); 
\draw[line width=0.4cm,color=blue] (B3)--(W3); \draw[line width=0.4cm,color=blue] (B4)--(W4); 
\draw[line width=0.4cm,color=blue] (B5)--(W6); \draw[line width=0.4cm,color=blue] (B6)--(W7); 
\draw[line width=0.4cm,color=blue] (B7)--(W8); 
\draw[line width=0.4cm,color=blue] (B9)--(W9); \draw[line width=0.4cm,color=blue] (B10)--(W10); 
\draw[line width=0.4cm,color=blue] (B11)--(W11); \draw[line width=0.4cm,color=blue] (B12)--(W12); 
\draw[line width=0.4cm,color=blue] (B13)--(W14); \draw[line width=0.4cm,color=blue] (B14)--(W15); 
\draw[line width=0.4cm,color=blue] (B15)--(W16); \draw[line width=0.4cm,color=blue] (B17)--(W17); 
\draw[line width=0.4cm,color=blue] (B18)--(W18); \draw[line width=0.4cm,color=blue] (B19)--(W19); 
\draw[line width=0.4cm,color=blue] (B20)--(W20); 
\draw[line width=0.4cm,color=blue] (B21)--(W22); \draw[line width=0.4cm,color=blue] (B22)--(W23); 
\draw[line width=0.4cm,color=blue] (B23)--(W24);

\filldraw  [ultra thick, fill=black] (0,0) circle [radius=0.25] ; \filldraw  [ultra thick, fill=black] (4,0) circle [radius=0.25] ;
\filldraw  [ultra thick, fill=black] (8,0) circle [radius=0.25] ; \filldraw  [ultra thick, fill=black] (12,0) circle [radius=0.25] ; 
\filldraw  [ultra thick, fill=black] (2,2) circle [radius=0.25] ; \filldraw  [ultra thick, fill=black] (6,2) circle [radius=0.25] ;
\filldraw  [ultra thick, fill=black] (10,2) circle [radius=0.25] ; \filldraw  [ultra thick, fill=black] (14,2) circle [radius=0.25] ;

\filldraw  [ultra thick, fill=black] (0,4) circle [radius=0.25] ; \filldraw  [ultra thick, fill=black] (4,4) circle [radius=0.25] ;
\filldraw  [ultra thick, fill=black] (8,4) circle [radius=0.25] ; \filldraw  [ultra thick, fill=black] (12,4) circle [radius=0.25] ;
\filldraw  [ultra thick, fill=black] (2,6) circle [radius=0.25] ; \filldraw  [ultra thick, fill=black] (6,6) circle [radius=0.25] ;
\filldraw  [ultra thick, fill=black] (10,6) circle [radius=0.25] ; \filldraw  [ultra thick, fill=black] (14,6) circle [radius=0.25] ;

\filldraw  [ultra thick, fill=black] (0,8) circle [radius=0.25] ; \filldraw  [ultra thick, fill=black] (4,8) circle [radius=0.25] ;
\filldraw  [ultra thick, fill=black] (8,8) circle [radius=0.25] ; \filldraw  [ultra thick, fill=black] (12,8) circle [radius=0.25] ;
\filldraw  [ultra thick, fill=black] (2,10) circle [radius=0.25] ; \filldraw  [ultra thick, fill=black] (6,10) circle [radius=0.25] ;
\filldraw  [ultra thick, fill=black] (10,10) circle [radius=0.25] ; \filldraw  [ultra thick, fill=black] (14,10) circle [radius=0.25] ;

\filldraw  [ultra thick, fill=white] (2,0) circle [radius=0.25] ; \filldraw  [ultra thick, fill=white] (6,0) circle [radius=0.25] ;
\filldraw  [ultra thick, fill=white] (10,0) circle [radius=0.25] ; \filldraw  [ultra thick, fill=white] (14,0) circle [radius=0.25] ;
\filldraw  [ultra thick, fill=white] (0,2) circle [radius=0.25] ; \filldraw  [ultra thick, fill=white] (4,2) circle [radius=0.25] ;
\filldraw  [ultra thick, fill=white] (8,2) circle [radius=0.25] ; \filldraw  [ultra thick, fill=white] (12,2) circle [radius=0.25] ;

\filldraw  [ultra thick, fill=white] (2,4) circle [radius=0.25] ; \filldraw  [ultra thick, fill=white] (6,4) circle [radius=0.25] ;
\filldraw  [ultra thick, fill=white] (10,4) circle [radius=0.25] ; \filldraw  [ultra thick, fill=white] (14,4) circle [radius=0.25] ;
\filldraw  [ultra thick, fill=white] (0,6) circle [radius=0.25] ; \filldraw  [ultra thick, fill=white] (4,6) circle [radius=0.25] ;
\filldraw  [ultra thick, fill=white] (8,6) circle [radius=0.25] ; \filldraw  [ultra thick, fill=white] (12,6) circle [radius=0.25] ;

\filldraw  [ultra thick, fill=white] (2,8) circle [radius=0.25] ; \filldraw  [ultra thick, fill=white] (6,8) circle [radius=0.25] ;
\filldraw  [ultra thick, fill=white] (10,8) circle [radius=0.25] ; \filldraw  [ultra thick, fill=white] (14,8) circle [radius=0.25] ;
\filldraw  [ultra thick, fill=white] (0,10) circle [radius=0.25] ; \filldraw  [ultra thick, fill=white] (4,10) circle [radius=0.25] ;
\filldraw  [ultra thick, fill=white] (8,10) circle [radius=0.25] ; \filldraw  [ultra thick, fill=white] (12,10) circle [radius=0.25] ;
\end{tikzpicture}
} }; 

\node (PM4) at (6,-4.2) 
{\scalebox{0.3}{
\begin{tikzpicture}
\node (B1) at (0,0){$$}; \node (B2) at (4,0){$$}; \node (B3) at (8,0){$$}; \node (B4) at (12,0){$$}; 
\node (B5) at (2,2){$$}; \node (B6) at (6,2){$$}; \node (B7) at (10,2){$$}; \node (B8) at (14,2){$$}; 
\node (B9) at (0,4){$$}; \node (B10) at (4,4){$$}; \node (B11) at (8,4){$$}; \node (B12) at (12,4){$$}; 
\node (B13) at (2,6){$$}; \node (B14) at (6,6){$$}; \node (B15) at (10,6){$$}; \node (B16) at (14,6){$$};
\node (B17) at (0,8){$$}; \node (B18) at (4,8){$$}; \node (B19) at (8,8){$$}; \node (B20) at (12,8){$$}; 
\node (B21) at (2,10){$$}; \node (B22) at (6,10){$$}; \node (B23) at (10,10){$$}; \node (B24) at (14,10){$$};

\node (W1) at (2,0){$$}; \node (W2) at (6,0){$$}; \node (W3) at (10,0){$$}; \node (W4) at (14,0){$$}; 
\node (W5) at (0,2){$$}; \node (W6) at (4,2){$$}; \node (W7) at (8,2){$$}; \node (W8) at (12,2){$$};
\node (W9) at (2,4){$$}; \node (W10) at (6,4){$$}; \node (W11) at (10,4){$$}; \node (W12) at (14,4){$$}; 
\node (W13) at (0,6){$$}; \node (W14) at (4,6){$$}; \node (W15) at (8,6){$$}; \node (W16) at (12,6){$$};
\node (W17) at (2,8){$$}; \node (W18) at (6,8){$$}; \node (W19) at (10,8){$$}; \node (W20) at (14,8){$$}; 
\node (W21) at (0,10){$$}; \node (W22) at (4,10){$$}; \node (W23) at (8,10){$$}; \node (W24) at (12,10){$$};

\draw[line width=0.06cm]  (B1)--(W1)--(B5)--(W5)--(B1); \draw[line width=0.06cm]  (W1)--(B2)--(W6)--(B5)--(W1);
\draw[line width=0.06cm]  (B2)--(W2)--(B6)--(W6)--(B2); \draw[line width=0.06cm]  (W2)--(B3)--(W7)--(B6)--(W2);
\draw[line width=0.06cm]  (B3)--(W3)--(B7)--(W7)--(B3); \draw[line width=0.06cm]  (W3)--(B4)--(W8)--(B7)--(W3);
\draw[line width=0.06cm]  (B4)--(W4)--(B8)--(W8)--(B4); 

\draw[line width=0.06cm]  (W5)--(B5)--(W9)--(B9)--(W5); \draw[line width=0.06cm]  (B5)--(W6)--(B10)--(W9)--(B5); 
\draw[line width=0.06cm]  (W6)--(B6)--(W10)--(B10)--(W6); \draw[line width=0.06cm]  (B6)--(W7)--(B11)--(W10)--(B6); 
\draw[line width=0.06cm]  (W7)--(B7)--(W11)--(B11)--(W7); \draw[line width=0.06cm]  (B7)--(W8)--(B12)--(W11)--(B7); 
\draw[line width=0.06cm]  (W8)--(B8)--(W12)--(B12)--(W8); 

\draw[line width=0.06cm]  (B9)--(W9)--(B13)--(W13)--(B9); \draw[line width=0.06cm]  (W9)--(B10)--(W14)--(B13)--(W9);
\draw[line width=0.06cm]  (B10)--(W10)--(B14)--(W14)--(B10); \draw[line width=0.06cm]  (W10)--(B11)--(W15)--(B14)--(W10);
\draw[line width=0.06cm]  (B11)--(W11)--(B15)--(W15)--(B11); \draw[line width=0.06cm]  (W11)--(B12)--(W16)--(B15)--(W11);
\draw[line width=0.06cm]  (B12)--(W12)--(B16)--(W16)--(B12); 

\draw[line width=0.06cm]  (W13)--(B13)--(W17)--(B17)--(W13); \draw[line width=0.06cm]  (B13)--(W14)--(B18)--(W17)--(B13); 
\draw[line width=0.06cm]  (W14)--(B14)--(W18)--(B18)--(W14); \draw[line width=0.06cm]  (B14)--(W15)--(B19)--(W18)--(B14); 
\draw[line width=0.06cm]  (W15)--(B15)--(W19)--(B19)--(W15); \draw[line width=0.06cm]  (B15)--(W16)--(B20)--(W19)--(B15); 
\draw[line width=0.06cm]  (W16)--(B16)--(W20)--(B20)--(W16); 

\draw[line width=0.06cm]  (B17)--(W17)--(B21)--(W21)--(B17); \draw[line width=0.06cm]  (W17)--(B18)--(W22)--(B21)--(W17);
\draw[line width=0.06cm]  (B18)--(W18)--(B22)--(W22)--(B18); \draw[line width=0.06cm]  (W18)--(B19)--(W23)--(B22)--(W18);
\draw[line width=0.06cm]  (B19)--(W19)--(B23)--(W23)--(B19); \draw[line width=0.06cm]  (W19)--(B20)--(W24)--(B23)--(W19);
\draw[line width=0.06cm]  (B20)--(W20)--(B24)--(W24)--(B20); 

\draw[line width=0.4cm,color=blue] (W5)--(B1); \draw[line width=0.4cm,color=blue] (W6)--(B2); \draw[line width=0.4cm,color=blue] (W7)--(B3); 
\draw[line width=0.4cm,color=blue] (W8)--(B4); \draw[line width=0.4cm,color=blue] (W9)--(B5); \draw[line width=0.4cm,color=blue] (W10)--(B6); 
\draw[line width=0.4cm,color=blue] (W11)--(B7); \draw[line width=0.4cm,color=blue] (W12)--(B8); \draw[line width=0.4cm,color=blue] (W13)--(B9); 
\draw[line width=0.4cm,color=blue] (W14)--(B10); \draw[line width=0.4cm,color=blue] (W15)--(B11); \draw[line width=0.4cm,color=blue] (W16)--(B12); 
\draw[line width=0.4cm,color=blue] (W17)--(B13); \draw[line width=0.4cm,color=blue] (W18)--(B14); \draw[line width=0.4cm,color=blue] (W19)--(B15); 
\draw[line width=0.4cm,color=blue] (W20)--(B16); \draw[line width=0.4cm,color=blue] (W21)--(B17); \draw[line width=0.4cm,color=blue] (W22)--(B18); 
\draw[line width=0.4cm,color=blue] (W23)--(B19); \draw[line width=0.4cm,color=blue] (W24)--(B20); 

\filldraw  [ultra thick, fill=black] (0,0) circle [radius=0.25] ; \filldraw  [ultra thick, fill=black] (4,0) circle [radius=0.25] ;
\filldraw  [ultra thick, fill=black] (8,0) circle [radius=0.25] ; \filldraw  [ultra thick, fill=black] (12,0) circle [radius=0.25] ; 
\filldraw  [ultra thick, fill=black] (2,2) circle [radius=0.25] ; \filldraw  [ultra thick, fill=black] (6,2) circle [radius=0.25] ;
\filldraw  [ultra thick, fill=black] (10,2) circle [radius=0.25] ; \filldraw  [ultra thick, fill=black] (14,2) circle [radius=0.25] ;

\filldraw  [ultra thick, fill=black] (0,4) circle [radius=0.25] ; \filldraw  [ultra thick, fill=black] (4,4) circle [radius=0.25] ;
\filldraw  [ultra thick, fill=black] (8,4) circle [radius=0.25] ; \filldraw  [ultra thick, fill=black] (12,4) circle [radius=0.25] ;
\filldraw  [ultra thick, fill=black] (2,6) circle [radius=0.25] ; \filldraw  [ultra thick, fill=black] (6,6) circle [radius=0.25] ;
\filldraw  [ultra thick, fill=black] (10,6) circle [radius=0.25] ; \filldraw  [ultra thick, fill=black] (14,6) circle [radius=0.25] ;

\filldraw  [ultra thick, fill=black] (0,8) circle [radius=0.25] ; \filldraw  [ultra thick, fill=black] (4,8) circle [radius=0.25] ;
\filldraw  [ultra thick, fill=black] (8,8) circle [radius=0.25] ; \filldraw  [ultra thick, fill=black] (12,8) circle [radius=0.25] ;
\filldraw  [ultra thick, fill=black] (2,10) circle [radius=0.25] ; \filldraw  [ultra thick, fill=black] (6,10) circle [radius=0.25] ;
\filldraw  [ultra thick, fill=black] (10,10) circle [radius=0.25] ; \filldraw  [ultra thick, fill=black] (14,10) circle [radius=0.25] ;

\filldraw  [ultra thick, fill=white] (2,0) circle [radius=0.25] ; \filldraw  [ultra thick, fill=white] (6,0) circle [radius=0.25] ;
\filldraw  [ultra thick, fill=white] (10,0) circle [radius=0.25] ; \filldraw  [ultra thick, fill=white] (14,0) circle [radius=0.25] ;
\filldraw  [ultra thick, fill=white] (0,2) circle [radius=0.25] ; \filldraw  [ultra thick, fill=white] (4,2) circle [radius=0.25] ;
\filldraw  [ultra thick, fill=white] (8,2) circle [radius=0.25] ; \filldraw  [ultra thick, fill=white] (12,2) circle [radius=0.25] ;

\filldraw  [ultra thick, fill=white] (2,4) circle [radius=0.25] ; \filldraw  [ultra thick, fill=white] (6,4) circle [radius=0.25] ;
\filldraw  [ultra thick, fill=white] (10,4) circle [radius=0.25] ; \filldraw  [ultra thick, fill=white] (14,4) circle [radius=0.25] ;
\filldraw  [ultra thick, fill=white] (0,6) circle [radius=0.25] ; \filldraw  [ultra thick, fill=white] (4,6) circle [radius=0.25] ;
\filldraw  [ultra thick, fill=white] (8,6) circle [radius=0.25] ; \filldraw  [ultra thick, fill=white] (12,6) circle [radius=0.25] ;

\filldraw  [ultra thick, fill=white] (2,8) circle [radius=0.25] ; \filldraw  [ultra thick, fill=white] (6,8) circle [radius=0.25] ;
\filldraw  [ultra thick, fill=white] (10,8) circle [radius=0.25] ; \filldraw  [ultra thick, fill=white] (14,8) circle [radius=0.25] ;
\filldraw  [ultra thick, fill=white] (0,10) circle [radius=0.25] ; \filldraw  [ultra thick, fill=white] (4,10) circle [radius=0.25] ;
\filldraw  [ultra thick, fill=white] (8,10) circle [radius=0.25] ; \filldraw  [ultra thick, fill=white] (12,10) circle [radius=0.25] ;
\end{tikzpicture}
} }; 

\end{tikzpicture}
}}
\caption{Extremal perfect matchings of a square dimer model}
\label{expm_square}
\end{center}
\end{figure}
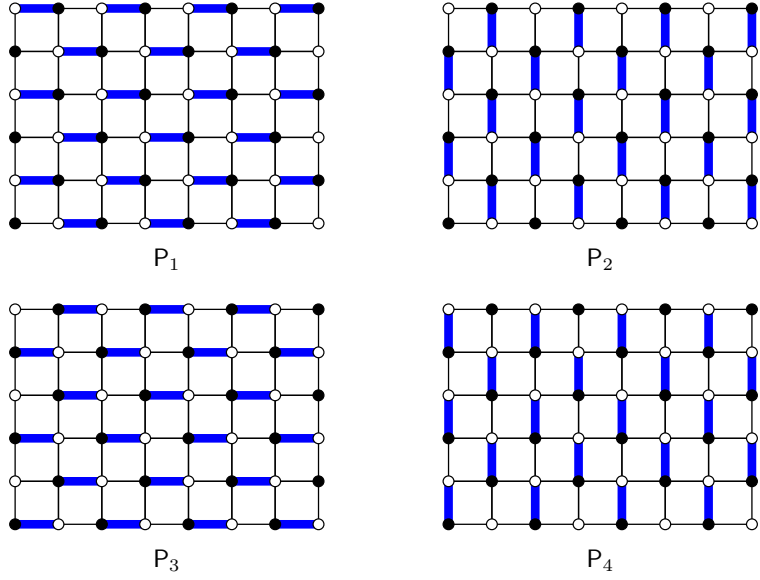

\begin{figure}[h!]
\begin{center}
\begin{tabular}{c}
\begin{minipage}{0.45\hsize}
\begin{center}
{\scalebox{0.35}{
\begin{tikzpicture}
\node (B1) at (0,0){$$}; \node (B2) at (4,0){$$}; \node (B3) at (8,0){$$}; \node (B4) at (12,0){$$}; 
\node (B5) at (2,2){$$}; \node (B6) at (6,2){$$}; \node (B7) at (10,2){$$}; \node (B8) at (14,2){$$}; 
\node (B9) at (0,4){$$}; \node (B10) at (4,4){$$}; \node (B11) at (8,4){$$}; \node (B12) at (12,4){$$}; 
\node (B13) at (2,6){$$}; \node (B14) at (6,6){$$}; \node (B15) at (10,6){$$}; \node (B16) at (14,6){$$};
\node (B17) at (0,8){$$}; \node (B18) at (4,8){$$}; \node (B19) at (8,8){$$}; \node (B20) at (12,8){$$}; 
\node (B21) at (2,10){$$}; \node (B22) at (6,10){$$}; \node (B23) at (10,10){$$}; \node (B24) at (14,10){$$};

\node (W1) at (2,0){$$}; \node (W2) at (6,0){$$}; \node (W3) at (10,0){$$}; \node (W4) at (14,0){$$}; 
\node (W5) at (0,2){$$}; \node (W6) at (4,2){$$}; \node (W7) at (8,2){$$}; \node (W8) at (12,2){$$};
\node (W9) at (2,4){$$}; \node (W10) at (6,4){$$}; \node (W11) at (10,4){$$}; \node (W12) at (14,4){$$}; 
\node (W13) at (0,6){$$}; \node (W14) at (4,6){$$}; \node (W15) at (8,6){$$}; \node (W16) at (12,6){$$};
\node (W17) at (2,8){$$}; \node (W18) at (6,8){$$}; \node (W19) at (10,8){$$}; \node (W20) at (14,8){$$}; 
\node (W21) at (0,10){$$}; \node (W22) at (4,10){$$}; \node (W23) at (8,10){$$}; \node (W24) at (12,10){$$};

\draw [fill=lightgray] (B1) rectangle (B5); \draw [fill=lightgray] (B2) rectangle (B6); \draw [fill=lightgray] (B3) rectangle (B7); \draw [fill=lightgray] (B4) rectangle (B8); 
\draw [fill=lightgray] (B5) rectangle (B10); \draw [fill=lightgray] (B6) rectangle (B11); \draw [fill=lightgray] (B7) rectangle (B12); 
\draw [fill=lightgray] (B9) rectangle (B13); \draw [fill=lightgray] (B10) rectangle (B14); \draw [fill=lightgray] (B11) rectangle (B15); \draw [fill=lightgray] (B12) rectangle (B16); 
\draw [fill=lightgray] (B13) rectangle (B18); \draw [fill=lightgray] (B14) rectangle (B19); \draw [fill=lightgray] (B15) rectangle (B20); 
\draw [fill=lightgray] (B17) rectangle (B21); \draw [fill=lightgray] (B18) rectangle (B22); \draw [fill=lightgray] (B19) rectangle (B23); 
\draw [fill=lightgray] (B20) rectangle (B24);  

\draw[line width=0.06cm]  (B1)--(W1)--(B5)--(W5)--(B1); \draw[line width=0.06cm]  (W1)--(B2)--(W6)--(B5)--(W1);
\draw[line width=0.06cm]  (B2)--(W2)--(B6)--(W6)--(B2); \draw[line width=0.06cm]  (W2)--(B3)--(W7)--(B6)--(W2);
\draw[line width=0.06cm]  (B3)--(W3)--(B7)--(W7)--(B3); \draw[line width=0.06cm]  (W3)--(B4)--(W8)--(B7)--(W3);
\draw[line width=0.06cm]  (B4)--(W4)--(B8)--(W8)--(B4); 

\draw[line width=0.06cm]  (W5)--(B5)--(W9)--(B9)--(W5); \draw[line width=0.06cm]  (B5)--(W6)--(B10)--(W9)--(B5); 
\draw[line width=0.06cm]  (W6)--(B6)--(W10)--(B10)--(W6); \draw[line width=0.06cm]  (B6)--(W7)--(B11)--(W10)--(B6); 
\draw[line width=0.06cm]  (W7)--(B7)--(W11)--(B11)--(W7); \draw[line width=0.06cm]  (B7)--(W8)--(B12)--(W11)--(B7); 
\draw[line width=0.06cm]  (W8)--(B8)--(W12)--(B12)--(W8); 

\draw[line width=0.06cm]  (B9)--(W9)--(B13)--(W13)--(B9); \draw[line width=0.06cm]  (W9)--(B10)--(W14)--(B13)--(W9);
\draw[line width=0.06cm]  (B10)--(W10)--(B14)--(W14)--(B10); \draw[line width=0.06cm]  (W10)--(B11)--(W15)--(B14)--(W10);
\draw[line width=0.06cm]  (B11)--(W11)--(B15)--(W15)--(B11); \draw[line width=0.06cm]  (W11)--(B12)--(W16)--(B15)--(W11);
\draw[line width=0.06cm]  (B12)--(W12)--(B16)--(W16)--(B12); 

\draw[line width=0.06cm]  (W13)--(B13)--(W17)--(B17)--(W13); \draw[line width=0.06cm]  (B13)--(W14)--(B18)--(W17)--(B13); 
\draw[line width=0.06cm]  (W14)--(B14)--(W18)--(B18)--(W14); \draw[line width=0.06cm]  (B14)--(W15)--(B19)--(W18)--(B14); 
\draw[line width=0.06cm]  (W15)--(B15)--(W19)--(B19)--(W15); \draw[line width=0.06cm]  (B15)--(W16)--(B20)--(W19)--(B15); 
\draw[line width=0.06cm]  (W16)--(B16)--(W20)--(B20)--(W16); 

\draw[line width=0.06cm]  (B17)--(W17)--(B21)--(W21)--(B17); \draw[line width=0.06cm]  (W17)--(B18)--(W22)--(B21)--(W17);
\draw[line width=0.06cm]  (B18)--(W18)--(B22)--(W22)--(B18); \draw[line width=0.06cm]  (W18)--(B19)--(W23)--(B22)--(W18);
\draw[line width=0.06cm]  (B19)--(W19)--(B23)--(W23)--(B19); \draw[line width=0.06cm]  (W19)--(B20)--(W24)--(B23)--(W19);
\draw[line width=0.06cm]  (B20)--(W20)--(B24)--(W24)--(B20); 

\filldraw  [ultra thick, fill=black] (0,0) circle [radius=0.25] ; \filldraw  [ultra thick, fill=black] (4,0) circle [radius=0.25] ;
\filldraw  [ultra thick, fill=black] (8,0) circle [radius=0.25] ; \filldraw  [ultra thick, fill=black] (12,0) circle [radius=0.25] ; 
\filldraw  [ultra thick, fill=black] (2,2) circle [radius=0.25] ; \filldraw  [ultra thick, fill=black] (6,2) circle [radius=0.25] ;
\filldraw  [ultra thick, fill=black] (10,2) circle [radius=0.25] ; \filldraw  [ultra thick, fill=black] (14,2) circle [radius=0.25] ;

\filldraw  [ultra thick, fill=black] (0,4) circle [radius=0.25] ; \filldraw  [ultra thick, fill=black] (4,4) circle [radius=0.25] ;
\filldraw  [ultra thick, fill=black] (8,4) circle [radius=0.25] ; \filldraw  [ultra thick, fill=black] (12,4) circle [radius=0.25] ;
\filldraw  [ultra thick, fill=black] (2,6) circle [radius=0.25] ; \filldraw  [ultra thick, fill=black] (6,6) circle [radius=0.25] ;
\filldraw  [ultra thick, fill=black] (10,6) circle [radius=0.25] ; \filldraw  [ultra thick, fill=black] (14,6) circle [radius=0.25] ;

\filldraw  [ultra thick, fill=black] (0,8) circle [radius=0.25] ; \filldraw  [ultra thick, fill=black] (4,8) circle [radius=0.25] ;
\filldraw  [ultra thick, fill=black] (8,8) circle [radius=0.25] ; \filldraw  [ultra thick, fill=black] (12,8) circle [radius=0.25] ;
\filldraw  [ultra thick, fill=black] (2,10) circle [radius=0.25] ; \filldraw  [ultra thick, fill=black] (6,10) circle [radius=0.25] ;
\filldraw  [ultra thick, fill=black] (10,10) circle [radius=0.25] ; \filldraw  [ultra thick, fill=black] (14,10) circle [radius=0.25] ;

\filldraw  [ultra thick, fill=white] (2,0) circle [radius=0.25] ; \filldraw  [ultra thick, fill=white] (6,0) circle [radius=0.25] ;
\filldraw  [ultra thick, fill=white] (10,0) circle [radius=0.25] ; \filldraw  [ultra thick, fill=white] (14,0) circle [radius=0.25] ;
\filldraw  [ultra thick, fill=white] (0,2) circle [radius=0.25] ; \filldraw  [ultra thick, fill=white] (4,2) circle [radius=0.25] ;
\filldraw  [ultra thick, fill=white] (8,2) circle [radius=0.25] ; \filldraw  [ultra thick, fill=white] (12,2) circle [radius=0.25] ;

\filldraw  [ultra thick, fill=white] (2,4) circle [radius=0.25] ; \filldraw  [ultra thick, fill=white] (6,4) circle [radius=0.25] ;
\filldraw  [ultra thick, fill=white] (10,4) circle [radius=0.25] ; \filldraw  [ultra thick, fill=white] (14,4) circle [radius=0.25] ;
\filldraw  [ultra thick, fill=white] (0,6) circle [radius=0.25] ; \filldraw  [ultra thick, fill=white] (4,6) circle [radius=0.25] ;
\filldraw  [ultra thick, fill=white] (8,6) circle [radius=0.25] ; \filldraw  [ultra thick, fill=white] (12,6) circle [radius=0.25] ;

\filldraw  [ultra thick, fill=white] (2,8) circle [radius=0.25] ; \filldraw  [ultra thick, fill=white] (6,8) circle [radius=0.25] ;
\filldraw  [ultra thick, fill=white] (10,8) circle [radius=0.25] ; \filldraw  [ultra thick, fill=white] (14,8) circle [radius=0.25] ;
\filldraw  [ultra thick, fill=white] (0,10) circle [radius=0.25] ; \filldraw  [ultra thick, fill=white] (4,10) circle [radius=0.25] ;
\filldraw  [ultra thick, fill=white] (8,10) circle [radius=0.25] ; \filldraw  [ultra thick, fill=white] (12,10) circle [radius=0.25] ;

\end{tikzpicture} }}
\end{center}
\caption{}
\label{faces_semisteady}
\end{minipage}

\begin{minipage}{0.45\hsize}
\begin{center}
{\scalebox{0.35}{
\begin{tikzpicture}
\node (B1) at (0,0){$$}; \node (B2) at (4,0){$$}; \node (B3) at (8,0){$$}; 
\node (B5) at (2,2){$$}; \node (B6) at (6,2){$$}; \node (B7) at (10,2){$$}; 
\node (B9) at (0,4){$$}; \node (B10) at (4,4){$$}; \node (B11) at (8,4){$$}; 
\node (B13) at (2,6){$$}; \node (B14) at (6,6){$$}; \node (B15) at (10,6){$$}; 
\node (B17) at (0,8){$$}; \node (B18) at (4,8){$$}; \node (B19) at (8,8){$$}; 
\node (B21) at (2,10){$$}; \node (B22) at (6,10){$$}; \node (B23) at (10,10){$$}; 

\node (W1) at (2,0){$$}; \node (W2) at (6,0){$$}; \node (W3) at (10,0){$$};
\node (W5) at (0,2){$$}; \node (W6) at (4,2){$$}; \node (W7) at (8,2){$$}; 
\node (W9) at (2,4){$$}; \node (W10) at (6,4){$$}; \node (W11) at (10,4){$$}; 
\node (W13) at (0,6){$$}; \node (W14) at (4,6){$$}; \node (W15) at (8,6){$$}; 
\node (W17) at (2,8){$$}; \node (W18) at (6,8){$$}; \node (W19) at (10,8){$$}; 
\node (W21) at (0,10){$$}; \node (W22) at (4,10){$$}; \node (W23) at (8,10){$$}; 

\draw [fill=lightgray] (B1) rectangle (B5); \draw [fill=lightgray] (B2) rectangle (B6); \draw [fill=lightgray] (B3) rectangle (B7); 
\draw [fill=lightgray] (B5) rectangle (B10); \draw [fill=lightgray] (B6) rectangle (B11); 
\draw [fill=lightgray] (B9) rectangle (B13); \draw [fill=lightgray] (B10) rectangle (B14); \draw [fill=lightgray] (B11) rectangle (B15); 
\draw [fill=lightgray] (B13) rectangle (B18); \draw [fill=lightgray] (B14) rectangle (B19); 
\draw [fill=lightgray] (B17) rectangle (B21); \draw [fill=lightgray] (B18) rectangle (B22); \draw [fill=lightgray] (B19) rectangle (B23); 

\draw[line width=0.06cm]  (B1)--(W1)--(B5)--(W5)--(B1); \draw[line width=0.06cm]  (W1)--(B2)--(W6)--(B5)--(W1);
\draw[line width=0.06cm]  (B2)--(W2)--(B6)--(W6)--(B2); \draw[line width=0.06cm]  (W2)--(B3)--(W7)--(B6)--(W2);
\draw[line width=0.06cm]  (B3)--(W3)--(B7)--(W7)--(B3); 

\draw[line width=0.06cm]  (W5)--(B5)--(W9)--(B9)--(W5); \draw[line width=0.06cm]  (B5)--(W6)--(B10)--(W9)--(B5); 
\draw[line width=0.06cm]  (W6)--(B6)--(W10)--(B10)--(W6); \draw[line width=0.06cm]  (B6)--(W7)--(B11)--(W10)--(B6); 
\draw[line width=0.06cm]  (W7)--(B7)--(W11)--(B11)--(W7); 

\draw[line width=0.06cm]  (B9)--(W9)--(B13)--(W13)--(B9); \draw[line width=0.06cm]  (W9)--(B10)--(W14)--(B13)--(W9);
\draw[line width=0.06cm]  (B10)--(W10)--(B14)--(W14)--(B10); \draw[line width=0.06cm]  (W10)--(B11)--(W15)--(B14)--(W10);
\draw[line width=0.06cm]  (B11)--(W11)--(B15)--(W15)--(B11); 

\draw[line width=0.06cm]  (W13)--(B13)--(W17)--(B17)--(W13); \draw[line width=0.06cm]  (B13)--(W14)--(B18)--(W17)--(B13); 
\draw[line width=0.06cm]  (W14)--(B14)--(W18)--(B18)--(W14); \draw[line width=0.06cm]  (B14)--(W15)--(B19)--(W18)--(B14); 
\draw[line width=0.06cm]  (W15)--(B15)--(W19)--(B19)--(W15); 

\draw[line width=0.06cm]  (B17)--(W17)--(B21)--(W21)--(B17); \draw[line width=0.06cm]  (W17)--(B18)--(W22)--(B21)--(W17);
\draw[line width=0.06cm]  (B18)--(W18)--(B22)--(W22)--(B18); \draw[line width=0.06cm]  (W18)--(B19)--(W23)--(B22)--(W18);
\draw[line width=0.06cm]  (B19)--(W19)--(B23)--(W23)--(B19); 

\filldraw  [ultra thick, fill=black] (0,0) circle [radius=0.25] ; \filldraw  [ultra thick, fill=black] (4,0) circle [radius=0.25] ;
\filldraw  [ultra thick, fill=black] (8,0) circle [radius=0.25] ; 
\filldraw  [ultra thick, fill=black] (2,2) circle [radius=0.25] ; \filldraw  [ultra thick, fill=black] (6,2) circle [radius=0.25] ;
\filldraw  [ultra thick, fill=black] (10,2) circle [radius=0.25] ; 

\filldraw  [ultra thick, fill=black] (0,4) circle [radius=0.25] ; \filldraw  [ultra thick, fill=black] (4,4) circle [radius=0.25] ;
\filldraw  [ultra thick, fill=black] (8,4) circle [radius=0.25] ; 
\filldraw  [ultra thick, fill=black] (2,6) circle [radius=0.25] ; \filldraw  [ultra thick, fill=black] (6,6) circle [radius=0.25] ;
\filldraw  [ultra thick, fill=black] (10,6) circle [radius=0.25] ; 

\filldraw  [ultra thick, fill=black] (0,8) circle [radius=0.25] ; \filldraw  [ultra thick, fill=black] (4,8) circle [radius=0.25] ;
\filldraw  [ultra thick, fill=black] (8,8) circle [radius=0.25] ; 
\filldraw  [ultra thick, fill=black] (2,10) circle [radius=0.25] ; \filldraw  [ultra thick, fill=black] (6,10) circle [radius=0.25] ;
\filldraw  [ultra thick, fill=black] (10,10) circle [radius=0.25] ; 

\filldraw  [ultra thick, fill=white] (2,0) circle [radius=0.25] ; \filldraw  [ultra thick, fill=white] (6,0) circle [radius=0.25] ;
\filldraw  [ultra thick, fill=white] (10,0) circle [radius=0.25] ; 
\filldraw  [ultra thick, fill=white] (0,2) circle [radius=0.25] ; \filldraw  [ultra thick, fill=white] (4,2) circle [radius=0.25] ;
\filldraw  [ultra thick, fill=white] (8,2) circle [radius=0.25] ; 

\filldraw  [ultra thick, fill=white] (2,4) circle [radius=0.25] ; \filldraw  [ultra thick, fill=white] (6,4) circle [radius=0.25] ;
\filldraw  [ultra thick, fill=white] (10,4) circle [radius=0.25] ; 
\filldraw  [ultra thick, fill=white] (0,6) circle [radius=0.25] ; \filldraw  [ultra thick, fill=white] (4,6) circle [radius=0.25] ;
\filldraw  [ultra thick, fill=white] (8,6) circle [radius=0.25] ; 

\filldraw  [ultra thick, fill=white] (2,8) circle [radius=0.25] ; \filldraw  [ultra thick, fill=white] (6,8) circle [radius=0.25] ;
\filldraw  [ultra thick, fill=white] (10,8) circle [radius=0.25] ; 
\filldraw  [ultra thick, fill=white] (0,10) circle [radius=0.25] ; \filldraw  [ultra thick, fill=white] (4,10) circle [radius=0.25] ;
\filldraw  [ultra thick, fill=white] (8,10) circle [radius=0.25] ; 

\node at (1,9.5) {{\huge$k$}}; 
\coordinate (pa1) at (1,9) ;
\coordinate (pa2) at (3,9); \coordinate (pa3) at (3,7); \coordinate (pa4) at (5,7); 
\coordinate (pa5) at (5,5); \coordinate (pa6) at (3,5); 
\node (pa7) at (3,3) {{\huge$j$}}; 

\node at (3,9.5) {{\huge$k^\prime$}}; 
\coordinate (pa1+) at (3,9) ; 
\coordinate (pa2+) at (5,9); \coordinate (pa3+) at (5,7); \coordinate (pa4+) at (7,7); 
\coordinate (pa5+) at (7,5); \coordinate (pa6+) at (5,5); 
\node (pa7+) at (5,3) {{\huge$j^\prime$}}; 

\draw[->, line width=0.15cm, rounded corners, color=red] (pa1)--(pa2)--(pa3)--(pa4)--(pa5)--(pa6)--(pa7) ;
\draw[->, line width=0.15cm, rounded corners, color=blue] (pa1+)--(pa2+)--(pa3+)--(pa4+)--(pa5+)--(pa6+)--(pa7+) ;

\end{tikzpicture} }}
\end{center}
\caption{}
\label{faces_semisteady_path}
\end{minipage}
\end{tabular}
\end{center}
\end{figure}

Now, we fix a vertex $k\in Q_0$, and let $M\coloneqq e_k\calP(Q, W_Q)\cong\bigoplus_{j\in Q_0}T_{kj}$. 
In the following, we show $\Hom_R(T_{ki},M)\cong M$ or $M^*$ for any $i\in Q_0$, and this means $M$ is semi-steady. 
We divide faces of a dimer model into gray faces and white faces as shown in Figure~\ref{faces_semisteady}, 
then vertices of $Q$ are also divided into two parts. 
We denote by $Q_0^{\rmg}$ (resp. $Q_0^{\rmw}$) the subset of $Q_0$ consisting of vertices corresponding to gray (resp. white) faces. 
We assume that the fixed vertex $k\in Q_0$ is in $Q_0^{\rmg}$, and fix a path $a_{kj}$ starting from $k\in Q_0^{\rmg}$ to $j\in Q_0$ for all $j$. 
(Note that module $T_{kj}$ does not depend on a choice of $a_{kj}$.) 
By the form of extremal perfect matchings, it is easy to see that for any $\ell\in Q_0^{\rmg}$ we can find a path starting from 
$\ell$ that evaluates to the same perfect matching function as $a_{kj}$. 
Therefore, we have that $M=\bigoplus_{j\in Q_0}T_{kj}\cong\bigoplus_{j\in Q_0}T_{\ell j}$ for any $\ell\in Q_0^{\rmg}$. 
In order to investigate a module $\bigoplus_{j\in Q_0}T_{\ell j}$ with $\ell\in Q_0^{\rmw}$, 
we again consider the fixed vertex $k\in Q_0^{\rmg}$ and paths $a_{kj}$. 
Then, we shift the vertex $k\in Q_0^{\rmg}$ to the right adjacent vertex. 
The shifted vertex is in $Q_0^{\rmw}$, and denote it by $k^\prime\in Q_0^{\rmw}$. 
In addition, we denote by $b_{k^\prime j^\prime}$ the path shifted from $a_{kj}$ for any $j$. 
(Figure~\ref{faces_semisteady_path} is an example of paths $a_{kj}$ and $b_{k^\prime j^\prime}$.) 
In particular, $b_{k^\prime j^\prime}$'s are paths on $Q^{\rm op}$, hence we have that 
\[
e_{k^\prime}\calP(Q,W_Q)\cong e_{k^\prime}\calP(Q^{\rm op},W_{Q^{\rm op}})
\cong\bigoplus_{j^\prime\in Q_0^{\rm op}} T(\sfP^{\rm op}_1(b_{k^\prime j^\prime}),\cdots,\sfP^{\rm op}_4(b_{k^\prime j^\prime})), 
\]
where $\sfP^{\rm op}_1,\cdots,\sfP^{\rm op}_4$ are extremal perfect matchings on $Q^{\rm op}$ 
corresponding to vertices $u_1,\cdots,u_4$ of $\Delta$. 
Thus, we have that 
\[
\bigoplus_{j\in Q_0}T_{k^\prime j}\cong e_{k^\prime}\calP(Q,W_Q)\cong\bigoplus_{j\in Q_0}T(\sfP_1(a_{kj}),\cdots,\sfP_4(a_{kj}))^*\cong M^* 
\]
by Lemma~\ref{key_lem} below. 
By the same argument used in the case of $Q_0^{\rmg}$, we have that $M^*\cong\bigoplus_{j\in Q_0}T_{k^\prime j}\cong\bigoplus_{j\in Q_0}T_{\ell j}$ 
for any $\ell\in Q_0^{\rmw}$. 
Consequently, we see that $M$ is semi-steady. 
Furthermore, since $\Gamma$ is not a regular hexagonal dimer model, this is not steady by Theorem~\ref{dimer}, and hence we have the desired conclusion. 

\medskip

Finally, we will show $(3)$$\Rightarrow$$(1)$. 
Let $\Gamma$ be an isoradial dimer model giving a semi-steady NCCR of $R$ that is not steady. 
By combining Theorem~\ref{class_semi_steady} and Lemma~\ref{cl_toric}, we see that the toric diagram of $R$ is a quadrangle. 
By the correspondence in Proposition~\ref{zigzag_sidepolygon}, there are four slopes $[z_1], [z_2] ,[z_3], [z_4]$ of zigzag paths corresponding to side segments of the toric diagram of $R$, and we suppose that these are ordered cyclically with this order. 
Let $\calZ_i$ be the set of zigzag paths having the same slope $[z_i]$ for $i=1,2,3,4$. 
Here, we recall that zigzag paths having the same slope arise as the difference of two extremal perfect matchings that are adjacent (see Proposition~\ref{zigzag_sidepolygon}). 
Thus, let $\sfP_1,\sfP_2, \sfP_3, \sfP_4$ be extremal perfect matchings, and suppose that zigzag paths in $\calZ_i$ can be obtained as the difference $\sfP_i-\sfP_{i-1}$ where $\sfP_0\coloneqq\sfP_4$. 
Since $\Gamma$ is isoradial, it is properly ordered, and hence slopes of zigzag paths factoring through the same node of $\Gamma$ differ from each other. 
Therefore, the number of edges incident to the same node is $3$ or $4$. (Note that $\Gamma$ does not have bivalent nodes.) 

We now assume that the toric diagram of $R$ is not a parallelogram. 
Then, at least one of pairs of slopes $([z_1], [z_3])$, $([z_2], [z_4])$ are linearly independent. We may assume that $[z_1]$ and $[z_3]$ are linearly independent. 
Thus, by Proposition~\ref{slope_isoradial} there is an intersection $E\in\Gamma_1$ of $z\in\calZ_1$ and $z^\prime\in\calZ_3$. 
We denote the white (resp. black) node that is an endpoint of $E$ by $w_E$ (resp. $b_E$). 
Then, the number of edges incident to $w_E$, which will be called the \emph{valency} of $w_E$, must be $3$ and zigzag paths factoring through $w_E$ are $z$, $z^\prime$ and the one contained in $\calZ_4$ because $\Gamma$ is properly ordered (see Figure~\ref{behavior_zigzag}). Also, the same properties hold for the node $b_E$. 
By Proposition~\ref{zigzag_sidepolygon}, the edges that are intersections of $z$ (resp. $z^\prime$) and a zigzag path in $\calZ_4$ are contained in $\sfP_4$ (resp. $\sfP_3$). 
Since each small cycle $\omega$ satisfies $\sfP_i(\omega)=1$ for all $i=1,\cdots,4$, the edge $E$ is contained in both $\sfP_1$ and $\sfP_2$. 

\begin{figure}[H]
\begin{tabular}{c}
\begin{minipage}{0.5\hsize}
\begin{center}
{\scalebox{0.7}{
\begin{tikzpicture}
\newcommand{\edgewidth}{0.05cm} 
\newcommand{\nodewidth}{0.05cm} 
\newcommand{\noderad}{0.16} 

\coordinate (W1) at (0,0); \coordinate (B1) at (3,0); 
\path (W1) ++(135:1.5cm) coordinate (W1a); \path (W1) ++(225:1.5cm) coordinate (W1b); 
\path (B1) ++(45:1.5cm) coordinate (B1a); \path (B1) ++(315:1.5cm) coordinate (B1b); 

\draw [line width=\edgewidth] (W1)--(B1); \draw [line width=\edgewidth] (W1)--(W1a); \draw [line width=\edgewidth] (W1)--(W1b); 
\draw [line width=\edgewidth] (B1)--(B1a); \draw [line width=\edgewidth] (B1)--(B1b); 
\draw  [line width=\nodewidth, fill=white] (W1) circle [radius=\noderad] ; 
\draw [line width=\nodewidth, fill=black] (B1) circle [radius=\noderad] ; 

\path (W1) ++(270:0.3cm) coordinate (W1-); \path (W1) ++(90:0.3cm) coordinate (W1+); 
\path (B1) ++(270:0.3cm) coordinate (B1-); \path (B1) ++(90:0.3cm) coordinate (B1+); 
\path (W1-) ++(225:1.7cm) coordinate (W1--); \path (W1+) ++(135:1.7cm) coordinate (W1++); 
\path (B1-) ++(315:1.7cm) coordinate (B1--); \path (B1+) ++(45:1.7cm) coordinate (B1++); 
\draw [->, rounded corners, line width=0.08cm, red] (W1--)--(W1-)--(B1+)--(B1++) ; 
\draw [->, rounded corners, line width=0.08cm, blue] (B1--)--(B1-)--(W1+)--(W1++) ; 

\path (W1) ++(180:0.3cm) coordinate (W1w); \path (W1w) ++(135:1.7cm) coordinate (W1w+); \path (W1w) ++(225:1.7cm) coordinate (W1w-); 
\path (B1) ++(0:0.3cm) coordinate (B1e); \path (B1e) ++(45:1.7cm) coordinate (B1e+); \path (B1e) ++(315:1.7cm) coordinate (B1e-); 
\draw [->, rounded corners, line width=0.08cm, green] (W1w+)--(W1w)--(W1w-) ; 
\draw [->, rounded corners, line width=0.08cm, green] (B1e+)--(B1e)--(B1e-) ; 
\node[red] at (4.5,1.9) {\Large$z\in\calZ_1$}; \node[blue] at (-1.5,1.9) {\Large$z^\prime\in\calZ_3$}; 
\node[green] at (-1.5,0) {\Large$w^{\prime}\in\calZ_4$} ; 
\node[green] at (4.5,0) {\Large$w\in\calZ_4$} ; 
\end{tikzpicture}
} }
\end{center}
\caption{Zigzag paths around the intersection $E$}
\label{behavior_zigzag}
\end{minipage}

\begin{minipage}{0.5\hsize}
\begin{center}
{\scalebox{0.7}{
\begin{tikzpicture}
\newcommand{\edgewidth}{0.05cm} 
\newcommand{\nodewidth}{0.05cm} 
\newcommand{\noderad}{0.16} 

\coordinate (W1) at (0,0); \coordinate (B1) at (3,0); 
\path (W1) ++(135:1.5cm) coordinate (W1a); \path (W1) ++(225:1.5cm) coordinate (W1b); 
\path (B1) ++(45:1.5cm) coordinate (B1a); \path (B1) ++(315:1.5cm) coordinate (B1b); 

\draw [lightgray, line width=\edgewidth] (W1)--(B1); \draw [lightgray, line width=\edgewidth] (W1)--(W1a); \draw [lightgray, line width=\edgewidth] (W1)--(W1b); 
\draw [lightgray, line width=\edgewidth] (B1)--(B1a); \draw [lightgray, line width=\edgewidth] (B1)--(B1b); 
\draw  [lightgray, line width=\nodewidth, fill=white] (W1) circle [radius=\noderad] ; 
\draw [lightgray, line width=\nodewidth, fill=lightgray, ] (B1) circle [radius=\noderad] ; 

\node (Va) at (1.5,1.3) {\Large$\alpha$}; \node (Vb) at (1.5,-1.3) {\Large$\beta$}; 
\node (Vc) at (-1.2,0) {\Large$\gamma$}; \node (Vd) at (4.2,0) {\Large$\delta$}; 
\draw [->, line width=\edgewidth] (Va)--(Vb); \draw [->, line width=\edgewidth] (Vb)--(Vc); \draw [->, line width=\edgewidth] (Vc)--(Va); 
\draw [->, line width=\edgewidth] (Vb)--(Vd); \draw [->, line width=\edgewidth] (Vd)--(Va); 
\node at (1.2,0.3) {\Large$a_1$}; \node at (0.2,-1) {\Large$a_2$}; \node at (0.2,1) {\Large$a_3$}; 
\node at (2.8,-1) {\Large$a_4$}; \node at (2.8,1) {\Large$a_5$}; 
\end{tikzpicture}
} }
\end{center}
\caption{The arrows around $w_E$ and $b_E$}
\label{behavior_PM}
\end{minipage}

\end{tabular}
\end{figure}

Let $(Q,W_Q)$ be the QP associated with $\Gamma$. Thus, $\calP\coloneqq\calP(Q,W_Q)$ is a semi-steady NCCR of $R$ that is not steady.  
Let $\alpha,\beta,\gamma,\delta$ be vertices of $Q$ appearing around $w_E,b_E$ and $a_1,\cdots,a_5$ be arrows between these vertices as shown in Figure~\ref{behavior_PM}. 
Thus, these arrows give divisorial ideals $T_{a_1}=T(1,1,0,0)$, $T_{a_3}=T_{a_4}=T(0,0,1,0)$, and $T_{a_2}=T_{a_5}=T(0,0,0,1)$. 
Thus, we see that 
\begin{center}
\begin{tabular}{ll}
$e_\alpha\calP\cong (e_\beta\calP\otimes_RT_{a_1})^{**}$,& $e_\gamma\calP\cong (e_\alpha\calP\otimes_RT_{a_3})^{**}$\\
$e_\delta\calP\cong (e_\alpha\calP\otimes_RT_{a_5})^{**}$,&
$e_\beta\calP\cong (e_\gamma\calP\otimes_RT_{a_2})^{**}\cong(e_\delta\calP\otimes_RT_{a_4})^{**}$. 
\end{tabular}
\end{center}

Let $v_1^\prime,\cdots,v_4^\prime\in\ZZ^2$ be the vertices of the toric diagram of $R$ corresponding to $\sfP_1,\cdots,\sfP_4$ respectively. 
Let $v_i\coloneqq(v_i^\prime,1)$ for $i=1,\cdots,4$. Then, the cone $\sigma$ generated by $v_1,\cdots,v_4$ defines $R$. 
Let $D_i\coloneqq[T(\delta_{i1},\cdots,\delta_{i4})]\in\Cl(R)$ for $i=1,\cdots,4$, where $\delta_{ij}$ is the Kronecker delta. 
Thus, we have $[T_{a_1}]=D_1+D_2,\, [T_{a_3}]=[T_{a_4}]=D_3,\, [T_{a_2}]=[T_{a_5}]=D_4$. 
By Lemma~\ref{cl_toric}, these satisfy 
\begin{equation}
\label{relations_divisor}
v_1D_1+v_2D_2+v_3D_3+v_4D_4=0. 
\end{equation}

By Lemma~\ref{lem_semi}, there is a splitting generator $M=\bigoplus_{i\in Q_0}M_i$ such that $\calP\cong\End_R(M)$ and $e_i\calP\cong M$ or $M^*$ for any $i\in Q_0$. Let $\calM\coloneqq\{[M_i]\in\Cl(R)\mid i\in Q_0\}$. 
If $(M\otimes_RI)^{**}\cong M$ holds for a divisorial ideal $I$, then we have $[I]\in\calM$ because $M$ is a generator. 
Using the above isomorphism again, we have $2[I]\in\calM$. 
Repeating this argument, we see that $[I]$ is a torsion element in $\Cl(R)$ because of the finiteness of $\calM$. 
Here, we assume that $D_4$ is torsion in $\Cl(R)$, and hence there is a positive integer $a$ such that $aD_4=0$ in $\Cl(R)$. 
By Lemma~\ref{cl_toric}, the equation $aD_4=0$ can be obtained by the relation (\ref{relations_divisor}). 
Thus, for the $3\times 3$ matrix $V\coloneqq(v_1\,v_2\,v_3)$, there exists a unimodular matrix $U$ such that one of rows of $UV$ is the zero vector. 
This means that $sa_1+tb_1=sa_2+tb_2=sa_3+tb_3$ for some $(0,0){\neq}(s,t)\in\ZZ^2$ where $v_i={}^t(a_i,b_i,1)$. 
We easily see that it is impossible to take such $(s,t)$ because the lattice points $v_1^\prime,v_2^\prime,v_3^\prime$ do not lie on the same line in $\RR^2$. Thus, $D_4$ is not torsion in $\Cl(R)$. By a similar argument, we also see that $D_3$ is not torsion. 
By these observations, we have 
\[
e_\alpha\calP\not\cong e_\gamma\calP,\quad e_\alpha\calP\not\cong e_\delta\calP,\quad e_\beta\calP\not\cong e_\gamma\calP,\quad e_\beta\calP\not\cong e_\delta\calP. 
\]
If $e_\alpha\calP\cong M$, then we have that $e_\beta\calP\cong M, e_\gamma\calP\cong M^*$ and $e_\delta\calP\cong M^*$, and hence 
\begin{equation}
\label{eq_proof1}
M^*\cong e_\gamma\calP\cong(e_\alpha\calP\otimes_RT_{a_3})^{**}\cong(M\otimes_RT_{a_3}), 
\end{equation}
\begin{equation}
\label{eq_proof2}
M\cong e_\beta\calP\cong(e_\delta\calP\otimes_RT_{a_4})^{**}\cong(M^*\otimes_RT_{a_4}). 
\end{equation}
Let $\calM^*\coloneqq\{[M_i^*]=-[M_i]\in\Cl(R)\mid i\in Q_0\}$. 
Then, by (\ref{eq_proof1}) we have $D_3\in\calM^*$. Using (\ref{eq_proof2}), we then have $2D_3\in\calM$. 
Since $D_3$ is not torsion, we can repeat these arguments infinitely, but this contradicts the finiteness of $\calM$. 
Even if $e_\alpha\calP\cong M^*$, we have the same conclusion by a similar argument. Therefore, the toric diagram of $R$ is a parallelogram. 
\end{proof}

In order to complete the proof of Theorem~\ref{main_thm}, we requre the following lemmas. 

\begin{lemma}
\label{key_lem}
With the notation as in the proof of Theorem~\ref{main_thm} $(2)$$\Rightarrow$$(3)$, we have that 
\begin{align*}
T(\sfP^{\rm op}_1(b_{k^\prime j^\prime}),\cdots,\sfP^{\rm op}_4(b_{k^\prime j^\prime}))
&\cong T(\sfP_3(b_{k^\prime j^\prime}),\sfP_4(b_{k^\prime j^\prime}),\sfP_1(b_{k^\prime j^\prime}),\sfP_2(b_{k^\prime j^\prime}))\\
&\cong T(\sfP_1(a_{kj}),\cdots,\sfP_4(a_{kj}))^*
\end{align*}
for each $j\in Q_0$. 
\end{lemma}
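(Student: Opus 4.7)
The plan is to establish the two isomorphisms in sequence.

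\emph{First isomorphism.} The claim reduces to the edge-set equality $\sfP^{\rm op}_i = \sfP_{i+2 \bmod 4}$, from which the stated $T$-equality follows by direct substitution. Three observations give this: (i) a perfect matching depends only on the underlying bipartite graph, so the set of PMs of $\Gamma$ equals the set of PMs of $\Gamma^{\rm op}$; (ii) reversing the convention of edge orientations (white-to-black becomes black-to-white) negates the class of each $1$-cycle $\sfP - \sfP_0$ in $\rmH_1(\sfT)$, hence $\Delta^{\rm op} = -\Delta$ as lattice polygons; (iii) the parallelogram condition gives $-u_i = u_{i+2}$ after choosing $\sfP_0$ so that the center of $\Delta$ sits at the origin. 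Thus the extremal PM on $\Gamma^{\rm op}$ labeled by $u_i$ and the extremal PM on $\Gamma$ labeled by $u_{i+2}$ are literally the same subset of edges, and the PM-function on any arrow depends only on which edge is involved, so $\sfP^{\rm op}_i(b_{k'j'}) = \sfP_{i+2}(b_{k'j'})$.

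\emph{Second isomorphism.} By the description of divisorial ideals in Subsection~\ref{toric_pre}, the $R$-dual inverts the class group, i.e.\ $T(\mathbf{u})^* \cong T(-\mathbf{u})$, so the claim reduces to exhibiting $y \in \sfM$ satisfying
\[
\lambda_i(y) \;=\; \sfP_i(a_{kj}) + \sfP_{i+2}(b_{k'j'}) \qquad (i = 1, 2, 3, 4).
\]
Here $a_{kj}$ (in $Q$) and $b_{k'j'}$ (in $Q^{\rm op}$) are parallel paths in the dimer model related by the translation $\tau$ in the torus taking the face $k$ to the face $k'$; note that $\tau$ swaps the bipartite coloring, which is why the shifted path lives in $Q^{\rm op}$. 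Connecting $a_{kj}$ and the reverse of $b_{k'j'}$ by short paths at their endpoints $j,j'$ and $k',k$ produces a closed loop on $\sfT$, and $y$ can be extracted from its homology class, suitably lifted into $\sfM$ by combining $\tau$ with the Gorenstein direction. Compatibility of the four equations is ensured by the parallelogram relation $v_1 - v_2 + v_3 - v_4 = 0$ in $\sfN$, which cuts out $\lambda(\sfM)$ in $\ZZ^4$ as a single hyperplane (up to finite index) and collapses the four constraints into mutually consistent ones.

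\emph{Main obstacle.} The technical heart is verifying the displayed identity arrow-by-arrow. One tracks how each arrow of $a_{kj}$, whose underlying edge belongs to a specific $\sfP_i$, is sent by $\tau$ to the corresponding arrow of $b_{k'j'}$ whose underlying edge belongs to a specific $\sfP_j$, using the explicit structure of the extremal PMs on a square dimer visible in Figure~\ref{expm_square}. The parallelogram hypothesis is used twice: it produces the identification $\sfP^{\rm op}_i = \sfP_{i+2}$ needed for the first isomorphism, and it supplies the single linear relation on $\sfN$ that certifies both existence and integrality of $y \in \sfM$ (as opposed to mere membership in its rational span) for the second isomorphism.
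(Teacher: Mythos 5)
Your first isomorphism argument is correct, though it travels a different road than the paper's: you pass directly through the $\rmH_1$-classes of $\sfP-\sfP_0$ and the central symmetry $-u_i=u_{i+2}$ of the parallelogram, whereas the paper invokes the zigzag-slope correspondence (Proposition~\ref{zigzag_sidepolygon}) and the observation that $z\mapsto -z$ takes zigzag paths on $\Gamma$ to zigzag paths on $\Gamma^{\rm op}$. Both routes give $\sfP^{\rm op}_i=\sfP_{i+2}$ as edge sets, and your version is, if anything, a bit more self-contained since it avoids the zigzag machinery; you do need to be explicit that the bipartite graph and hence the set of perfect matchings is literally unchanged under the color swap, which you observe.

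For the second isomorphism your reduction is correct: after $T(\mathbf{u})^*\cong T(-\mathbf{u})$ one needs $y\in\sfM$ with $\lambda_i(y)=\sfP_i(a_{kj})+\sfP_{i+2}(b_{k'j'})$. But the closed-loop homology detour you then take does not fit this target. Closing $a_{kj}$ against the reverse of $b_{k'j'}$ produces, for the $i$-th coordinate, an expression of the form $\sfP_i(a)-\sfP_i(b)+(\text{corrections})$: the perfect-matching index does not shift by $2$ along the way, so this quantity is not the one you need to place in $\lambda(\sfM)$. What actually happens is simpler and is what the paper proves: the translation $\tau$ sends an arrow of $a_{kj}$ dual to a $\sfP_i$-edge to an arrow of $b_{k'j'}$ dual to a $\sfP_{i+2}$-edge, and since that latter arrow lives in $Q^{\rm op}$ the extended perfect-matching function contributes a minus sign, giving the arrow-by-arrow identity $\sfP_{i+2}(\beta)=-\sfP_i(\alpha)$. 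Summing over the path yields $\sfP_{i+2}(b_{k'j'})=-\sfP_i(a_{kj})$ exactly, i.e.\ $y=0$ works and no homology argument is needed. You flag this arrow tracking as ``the technical heart,'' which is right, but you present it as a supporting check for an existence argument for some nonzero $y$; the fix is to recognize that the arrow tracking \emph{is} the whole proof and that it lands on $y=0$, making the parallelogram-hyperplane and integrality remarks about $y$ unnecessary.
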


\begin{proof}
Let $z$ be a zigzag path on $\Gamma$. By replacing white nodes with black ones and vice versa, we have $\Gamma^{\rm op}$ and 
the associated quiver $Q^{\rm op}$. Then, $-z$ is a zigzag path on $\Gamma^{\rm op}$. 
Considering slopes of zigzag paths, we see that extremal perfect matchings on $\Gamma^{\rm op}$ corresponding to vertices $u_1,\cdots,u_4$ 
are $\sfP_3, \sfP_4,\sfP_1,\sfP_2$ respectively by Proposition~\ref{zigzag_sidepolygon}. 
Therefore, we obtain the first isomorphism. 

Next, we consider the operation of shifting a path $a_{kj}$ to $b_{k^\prime j^\prime}$. 
By this operation, an arrow evaluating on $\sfP_1$ will shift to that on $-\sfP_3$. 
Similarly, an arrow evaluating on $\sfP_2,\sfP_3,\sfP_4$ will shift to that on $-\sfP_4,-\sfP_1,-\sfP_2$ respectively. 
Therefore, we obtain the second isomorphism. 
\end{proof}

By combining this theorem with Theorem~\ref{dimer}, we obtain a characterization of dimer models that are homotopy equivalent to regular dimer models 
in terms of NCCRs. 

\begin{corollary}
\label{main_cor}
With the notation as Theorem~\ref{main_thm}, 
the following conditions are equivalent.
\begin{enumerate}[\rm (1)]
\item $\Gamma$ is isoradial and gives a semi-steady NCCR of $R$.
\item $\Gamma$ is homotopy equivalent to a regular dimer model. 
\end{enumerate}
When this is the case, the toric diagram of $R$ is a triangle or parallelogram. 
\end{corollary}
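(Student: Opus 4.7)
The plan is to bundle together Theorem~\ref{dimer} and Theorem~\ref{main_thm}. The key observation, recalled just after Figure~\ref{regular_tiling}, is that any dimer model homotopy equivalent to a regular tiling must in fact be homotopy equivalent to either a square dimer model or a regular hexagonal dimer model, since equilateral triangles cannot appear as faces of a bipartite graph. Thus both directions of the claimed equivalence split into a hexagonal sub-case, handled by Theorem~\ref{dimer}, and a square sub-case, handled by Theorem~\ref{main_thm}, the two distinguished by whether the semi-steady NCCR is steady.

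For $(1)\Rightarrow(2)$, I would suppose that $\Gamma$ is isoradial and gives a semi-steady NCCR of $R$, and branch on steadiness. If the NCCR is actually steady, condition (3) of Theorem~\ref{dimer} is in force and $\Gamma$ is homotopy equivalent to a regular hexagonal dimer model. If it is semi-steady but not steady, then $\Gamma$ itself witnesses condition (3) of Theorem~\ref{main_thm}, so the final sentence of that theorem applies and forces $\Gamma$ to be homotopy equivalent to a square dimer model. Either way $\Gamma$ is homotopy equivalent to a regular dimer model.

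For $(2)\Rightarrow(1)$ I would run the same dichotomy in reverse. If $\Gamma$ is homotopy equivalent to a regular hexagonal dimer model, Theorem~\ref{dimer} provides a steady (hence semi-steady) NCCR, and isoradiality is visible directly from the zigzag paths of the hexagonal tiling. If $\Gamma$ is homotopy equivalent to a square dimer model, then condition (2) of Theorem~\ref{main_thm} holds; combined with the zigzag analysis carried out in the proof of its $(2)\Rightarrow(3)$ step, which extracts isoradiality directly from the square zigzag diagram, the final sentence of Theorem~\ref{main_thm} then yields an isoradial $\Gamma$ giving a (non-steady) semi-steady NCCR. The closing assertion about the shape of the toric diagram is then immediate: in the hexagonal case Theorem~\ref{dimer}(1) identifies $R$ with a quotient singularity by a finite abelian subgroup of $\SL(3,k)$, so the defining cone is simplicial and the toric diagram is a triangle; in the square case Theorem~\ref{main_thm}(1) gives a parallelogram. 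There is essentially no obstacle here, since every step reduces to a clause already established earlier; the only point requiring care is to invoke the \emph{final} sentence of Theorem~\ref{main_thm} rather than only its three-way equivalence, so that one concludes a statement about the fixed $\Gamma$ at hand rather than merely the existence of some $\Gamma_i$ with the right property.
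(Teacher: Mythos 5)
Your proof is correct and follows essentially the same approach the paper intends: the corollary is derived by combining Theorem~\ref{dimer} with Theorem~\ref{main_thm}, splitting according to whether the semi-steady NCCR is actually steady (hexagonal case) or not (square case), using the fact noted after Figure~\ref{regular_tiling} that a regular dimer model is either hexagonal or square, together with Lemma~\ref{semisteady_iff_steady} and the observation that regular dimer models are isoradial. Your remark about invoking the final sentence of Theorem~\ref{main_thm} rather than only the three-way equivalence is precisely the point that needs care, and you handle it correctly.
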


\section{\bf Examples}
\label{subsec_ex}

We end this paper by giving several examples. 

\begin{example}[{See also \cite[Corollary~1.7 and Example~1.8]{IN}, \cite{UY}}]
\label{ex_steady}
The following figures are a consistent dimer model that is homotopy equivalent to a regular hexagonal dimer model, and the associated quiver. 
Here, the red area denotes the fundamental domain of the two-torus. 
This quiver coincides with the McKay quiver of $G=\langle\mathrm{diag}(\omega,\omega^2,\omega^4)\rangle$ 
where $\omega$ is a primitive $7$-th root of unity, 
and the complete Jacobian algebra is isomorphic to the skew group ring $S*G$ where $S\coloneqq k[[x_1,x_2,x_3]]$. 
Furthermore, the center of the complete Jacobian algebra is the quotient singularity $R=S^G$. 
By Theorem~\ref{dimer}, this dimer model gives a steady NCCR of $R$, which is $\End_R(S)\cong S*G$. 

\medskip

\begin{center}
\begin{tikzpicture}
\node (DM) at (0,0) 
{\scalebox{0.5}{
\begin{tikzpicture}
\coordinate (e05) at (0:1cm); \coordinate (e15) at (60:1cm); \coordinate (e25) at (120:1cm); 
\coordinate (e35) at (180:1cm); \coordinate (e45) at (240:1cm); \coordinate (e55) at (300:1cm); 

\path (e15) ++(1, 0) coordinate (V3); 
\path (V3) ++(0:1cm) coordinate (e03); \path (V3) ++(60:1cm) coordinate (e13); \path (V3) ++(120:1cm) coordinate (e23);
\path (V3) ++(180:1cm) coordinate (e33); \path (V3) ++(240:1cm) coordinate (e43); \path (V3) ++(300:1cm) coordinate (e53);

\path (e55) ++(1, 0) coordinate (V6); 
\path (V6) ++(0:1cm) coordinate (e06); \path (V6) ++(60:1cm) coordinate (e16); \path (V6) ++(120:1cm) coordinate (e26);
\path (V6) ++(180:1cm) coordinate (e36); \path (V6) ++(240:1cm) coordinate (e46); \path (V6) ++(300:1cm) coordinate (e56);

\path (e23) ++(-1, 0) coordinate (V2); 
\path (V2) ++(0:1cm) coordinate (e02); \path (V2) ++(60:1cm) coordinate (e12); \path (V2) ++(120:1cm) coordinate (e22);
\path (V2) ++(180:1cm) coordinate (e32); \path (V2) ++(240:1cm) coordinate (e42); \path (V2) ++(300:1cm) coordinate (e52);

\path (e46) ++(-1, 0) coordinate (V1); 
\path (V1) ++(0:1cm) coordinate (e01); \path (V1) ++(60:1cm) coordinate (e11); \path (V1) ++(120:1cm) coordinate (e21);
\path (V1) ++(180:1cm) coordinate (e31); \path (V1) ++(240:1cm) coordinate (e41); \path (V1) ++(300:1cm) coordinate (e51);

\path (e25) ++(-1, 0) coordinate (V4); 
\path (V4) ++(0:1cm) coordinate (e04); \path (V4) ++(60:1cm) coordinate (e14); \path (V4) ++(120:1cm) coordinate (e24);
\path (V4) ++(180:1cm) coordinate (e34); \path (V4) ++(240:1cm) coordinate (e44); \path (V4) ++(300:1cm) coordinate (e54);

\path (e45) ++(-1, 0) coordinate (V0); 
\path (V0) ++(0:1cm) coordinate (e00); \path (V0) ++(60:1cm) coordinate (e10); \path (V0) ++(120:1cm) coordinate (e20);
\path (V0) ++(180:1cm) coordinate (e30); \path (V0) ++(240:1cm) coordinate (e40); \path (V0) ++(300:1cm) coordinate (e50);

\path (e13) ++(1, 0) coordinate (V1a); 
\path (V1a) ++(0:1cm) coordinate (e01a); \path (V1a) ++(60:1cm) coordinate (e11a); \path (V1a) ++(120:1cm) coordinate (e21a);
\path (V1a) ++(180:1cm) coordinate (e31a); \path (V1a) ++(240:1cm) coordinate (e41a); \path (V1a) ++(300:1cm) coordinate (e51a);

\path (e53) ++(1, 0) coordinate (V4a); 
\path (V4a) ++(0:1cm) coordinate (e04a); \path (V4a) ++(60:1cm) coordinate (e14a); \path (V4a) ++(120:1cm) coordinate (e24a);
\path (V4a) ++(180:1cm) coordinate (e34a); \path (V4a) ++(240:1cm) coordinate (e44a); \path (V4a) ++(300:1cm) coordinate (e54a);

\path (e56) ++(1, 0) coordinate (V0a); 
\path (V0a) ++(0:1cm) coordinate (e00a); \path (V0a) ++(60:1cm) coordinate (e10a); \path (V0a) ++(120:1cm) coordinate (e20a);
\path (V0a) ++(180:1cm) coordinate (e30a); \path (V0a) ++(240:1cm) coordinate (e40a); \path (V0a) ++(300:1cm) coordinate (e50a);

\path (e24) ++(-1, 0) coordinate (V3a); 
\path (V3a) ++(0:1cm) coordinate (e03a); \path (V3a) ++(60:1cm) coordinate (e13a); \path (V3a) ++(120:1cm) coordinate (e23a);
\path (V3a) ++(180:1cm) coordinate (e33a); \path (V3a) ++(240:1cm) coordinate (e43a); \path (V3a) ++(300:1cm) coordinate (e53a);

\path (e44) ++(-1, 0) coordinate (V6a); 
\path (V6a) ++(0:1cm) coordinate (e06a); \path (V6a) ++(60:1cm) coordinate (e16a); \path (V6a) ++(120:1cm) coordinate (e26a);
\path (V6a) ++(180:1cm) coordinate (e36a); \path (V6a) ++(240:1cm) coordinate (e46a); \path (V6a) ++(300:1cm) coordinate (e56a);

\path (e40) ++(-1, 0) coordinate (V2a); 
\path (V2a) ++(0:1cm) coordinate (e02a); \path (V2a) ++(60:1cm) coordinate (e12a); \path (V2a) ++(120:1cm) coordinate (e22a);
\path (V2a) ++(180:1cm) coordinate (e32a); \path (V2a) ++(240:1cm) coordinate (e42a); \path (V2a) ++(300:1cm) coordinate (e52a);

\draw [line width=0.05cm]  (e00)--(e10)--(e20)--(e30)--(e40)--(e50)--(e00) ;
\draw [line width=0.05cm]  (e51)--(e01)--(e11)--(e21)--(e31)--(e41) ;
\draw [line width=0.05cm]  (e02)--(e12);
\draw [line width=0.05cm]  (e22)--(e32)--(e42)--(e52)--(e02) ;
\draw [line width=0.05cm]  (e03)--(e13)--(e23)--(e33)--(e43)--(e53)--(e03) ; 
\draw [line width=0.05cm]  (e04)--(e14)--(e24)--(e34)--(e44)--(e54)--(e04) ;
\draw [line width=0.05cm]  (e05)--(e15)--(e25)--(e35)--(e45)--(e55)--(e05) ;
\draw [line width=0.05cm]  (e06)--(e16)--(e26)--(e36)--(e46)--(e56)--(e06) ;

\draw [line width=0.05cm]  (e21a)--(e31a) ;
\draw [line width=0.05cm]  (e30a)--(e40a) ;
\draw [line width=0.05cm]  (e03a)--(e13a) ;
\draw [line width=0.05cm]  (e23a)--(e33a)--(e43a)--(e53a)--(e03a) ;
\draw [line width=0.05cm]  (e06a)--(e16a)--(e26a)--(e36a)--(e46a)--(e56a)--(e06a) ;
\draw [line width=0.05cm]  (e52a)--(e02a)--(e12a)--(e22a)--(e32a)--(e42a) ; 

\draw [line width=0.05cm] (e33a)-- +(-0.5,0) ; \draw [line width=0.05cm] (e36a)-- +(-0.5,0) ; \draw [line width=0.05cm] (e32a)-- +(-0.5,0) ;
\draw [line width=0.05cm] (e03)-- +(0.5,0) ; \draw [line width=0.05cm] (e06)-- +(0.5,0) ; 

\path (e22) ++(-1, 0) coordinate (V1b); 
\path (e22a) ++(-1, 0) coordinate (V1c); 
\path (e26a) ++(-1, 0) coordinate (V5a); 

\filldraw [ultra thick, fill=black] (e05) circle [radius=0.18]; \filldraw [ultra thick, fill=black] (e25) circle [radius=0.18]; \filldraw [ultra thick, fill=black] (e45) circle [radius=0.18];
\filldraw [ultra thick, fill=white] (e15) circle [radius=0.18]; \filldraw [ultra thick, fill=white] (e35) circle [radius=0.18]; \filldraw [ultra thick, fill=white] (e55) circle [radius=0.18]; 
\filldraw [ultra thick, fill=black] (e03) circle [radius=0.18]; \filldraw [ultra thick, fill=black] (e23) circle [radius=0.18]; 
\filldraw [ultra thick, fill=white] (e13) circle [radius=0.18]; \filldraw [ultra thick, fill=white] (e53) circle [radius=0.18]; 
\filldraw [ultra thick, fill=black] (e24) circle [radius=0.18]; \filldraw [ultra thick, fill=black] (e44) circle [radius=0.18];
\filldraw [ultra thick, fill=white] (e14) circle [radius=0.18]; \filldraw [ultra thick, fill=white] (e34) circle [radius=0.18]; 
\filldraw [ultra thick, fill=black] (e40) circle [radius=0.18];
\filldraw [ultra thick, fill=white] (e30) circle [radius=0.18]; \filldraw [ultra thick, fill=white] (e50) circle [radius=0.18]; 
\filldraw [ultra thick, fill=black] (e06) circle [radius=0.18]; \filldraw [ultra thick, fill=black] (e46) circle [radius=0.18]; 
\filldraw [ultra thick, fill=white] (e56) circle [radius=0.18]; 
\filldraw [ultra thick, fill=black] (e26a) circle [radius=0.18]; \filldraw [ultra thick, fill=black] (e46a) circle [radius=0.18]; 
\filldraw [ultra thick, fill=white] (e36a) circle [radius=0.18]; 
\filldraw [ultra thick, fill=white] (e33a) circle [radius=0.18]; \filldraw [ultra thick, fill=white] (e32a) circle [radius=0.18]; 

\draw[line width=0.08cm, red]  (V1)--(V1a)--(V1b)--(V1c)--(V1) ; 
\end{tikzpicture}
} } ;  

\node (QV) at (6,0) 
{\scalebox{0.5}{
\begin{tikzpicture}
\node (V5) at (0,0) {}; 
\node (n5) at (V5) {{\LARGE$5$}}; 
\coordinate (e05) at (0:1cm); \coordinate (e15) at (60:1cm); \coordinate (e25) at (120:1cm); 
\coordinate (e35) at (180:1cm); \coordinate (e45) at (240:1cm); \coordinate (e55) at (300:1cm); 

\path (e15) ++(1, 0) coordinate (V3); \node (n3) at (V3) {{\LARGE$3$}}; 
\path (V3) ++(0:1cm) coordinate (e03); \path (V3) ++(60:1cm) coordinate (e13); \path (V3) ++(120:1cm) coordinate (e23);
\path (V3) ++(180:1cm) coordinate (e33); \path (V3) ++(240:1cm) coordinate (e43); \path (V3) ++(300:1cm) coordinate (e53);

\path (e55) ++(1, 0) coordinate (V6); \node (n6) at (V6) {{\LARGE$6$}}; 
\path (V6) ++(0:1cm) coordinate (e06); \path (V6) ++(60:1cm) coordinate (e16); \path (V6) ++(120:1cm) coordinate (e26);
\path (V6) ++(180:1cm) coordinate (e36); \path (V6) ++(240:1cm) coordinate (e46); \path (V6) ++(300:1cm) coordinate (e56);

\path (e23) ++(-1, 0) coordinate (V2); \node (n2) at (V2) {{\LARGE$2$}}; 
\path (V2) ++(0:1cm) coordinate (e02); \path (V2) ++(60:1cm) coordinate (e12); \path (V2) ++(120:1cm) coordinate (e22);
\path (V2) ++(180:1cm) coordinate (e32); \path (V2) ++(240:1cm) coordinate (e42); \path (V2) ++(300:1cm) coordinate (e52);

\path (e46) ++(-1, 0) coordinate (V1); \node (n1) at (V1) {{\LARGE$1$}};
\path (V1) ++(0:1cm) coordinate (e01); \path (V1) ++(60:1cm) coordinate (e11); \path (V1) ++(120:1cm) coordinate (e21);
\path (V1) ++(180:1cm) coordinate (e31); \path (V1) ++(240:1cm) coordinate (e41); \path (V1) ++(300:1cm) coordinate (e51);

\path (e25) ++(-1, 0) coordinate (V4); \node (n4) at (V4) {{\LARGE$4$}};
\path (V4) ++(0:1cm) coordinate (e04); \path (V4) ++(60:1cm) coordinate (e14); \path (V4) ++(120:1cm) coordinate (e24);
\path (V4) ++(180:1cm) coordinate (e34); \path (V4) ++(240:1cm) coordinate (e44); \path (V4) ++(300:1cm) coordinate (e54);

\path (e45) ++(-1, 0) coordinate (V0); \node (n0) at (V0) {{\LARGE$0$}};
\path (V0) ++(0:1cm) coordinate (e00); \path (V0) ++(60:1cm) coordinate (e10); \path (V0) ++(120:1cm) coordinate (e20);
\path (V0) ++(180:1cm) coordinate (e30); \path (V0) ++(240:1cm) coordinate (e40); \path (V0) ++(300:1cm) coordinate (e50);

\path (e13) ++(1, 0) coordinate (V1a); \node (n1a) at (V1a) {{\LARGE$1$}};
\path (V1a) ++(0:1cm) coordinate (e01a); \path (V1a) ++(60:1cm) coordinate (e11a); \path (V1a) ++(120:1cm) coordinate (e21a);
\path (V1a) ++(180:1cm) coordinate (e31a); \path (V1a) ++(240:1cm) coordinate (e41a); \path (V1a) ++(300:1cm) coordinate (e51a);

\path (e53) ++(1, 0) coordinate (V4a); \node (n4a) at (V4a) {{\LARGE$4$}};
\path (V4a) ++(0:1cm) coordinate (e04a); \path (V4a) ++(60:1cm) coordinate (e14a); \path (V4a) ++(120:1cm) coordinate (e24a);
\path (V4a) ++(180:1cm) coordinate (e34a); \path (V4a) ++(240:1cm) coordinate (e44a); \path (V4a) ++(300:1cm) coordinate (e54a);

\path (e56) ++(1, 0) coordinate (V0a); \node (n0a) at (V0a) {{\LARGE$0$}};
\path (V0a) ++(0:1cm) coordinate (e00a); \path (V0a) ++(60:1cm) coordinate (e10a); \path (V0a) ++(120:1cm) coordinate (e20a);
\path (V0a) ++(180:1cm) coordinate (e30a); \path (V0a) ++(240:1cm) coordinate (e40a); \path (V0a) ++(300:1cm) coordinate (e50a);

\path (e24) ++(-1, 0) coordinate (V3a); \node (n3a) at (V3a) {{\LARGE$3$}};
\path (V3a) ++(0:1cm) coordinate (e03a); \path (V3a) ++(60:1cm) coordinate (e13a); \path (V3a) ++(120:1cm) coordinate (e23a);
\path (V3a) ++(180:1cm) coordinate (e33a); \path (V3a) ++(240:1cm) coordinate (e43a); \path (V3a) ++(300:1cm) coordinate (e53a);

\path (e44) ++(-1, 0) coordinate (V6a); \node (n6a) at (V6a) {{\LARGE$6$}};
\path (V6a) ++(0:1cm) coordinate (e06a); \path (V6a) ++(60:1cm) coordinate (e16a); \path (V6a) ++(120:1cm) coordinate (e26a);
\path (V6a) ++(180:1cm) coordinate (e36a); \path (V6a) ++(240:1cm) coordinate (e46a); \path (V6a) ++(300:1cm) coordinate (e56a);

\path (e40) ++(-1, 0) coordinate (V2a); \node (n2a) at (V2a) {{\LARGE$2$}};
\path (V2a) ++(0:1cm) coordinate (e02a); \path (V2a) ++(60:1cm) coordinate (e12a); \path (V2a) ++(120:1cm) coordinate (e22a);
\path (V2a) ++(180:1cm) coordinate (e32a); \path (V2a) ++(240:1cm) coordinate (e42a); \path (V2a) ++(300:1cm) coordinate (e52a);

\draw [line width=0.05cm, lightgray]  (e00)--(e10)--(e20)--(e30)--(e40)--(e50)--(e00) ;
\draw [line width=0.05cm, lightgray]  (e51)--(e01)--(e11)--(e21)--(e31)--(e41) ;
\draw [line width=0.05cm, lightgray]  (e02)--(e12);
\draw [line width=0.05cm, lightgray]  (e22)--(e32)--(e42)--(e52)--(e02) ;
\draw [line width=0.05cm, lightgray]  (e03)--(e13)--(e23)--(e33)--(e43)--(e53)--(e03) ; 
\draw [line width=0.05cm, lightgray]  (e04)--(e14)--(e24)--(e34)--(e44)--(e54)--(e04) ;
\draw [line width=0.05cm, lightgray]  (e05)--(e15)--(e25)--(e35)--(e45)--(e55)--(e05) ;
\draw [line width=0.05cm, lightgray]  (e06)--(e16)--(e26)--(e36)--(e46)--(e56)--(e06) ;

\draw [line width=0.05cm, lightgray]  (e21a)--(e31a) ;
\draw [line width=0.05cm, lightgray]  (e30a)--(e40a) ;
\draw [line width=0.05cm, lightgray]  (e03a)--(e13a) ;
\draw [line width=0.05cm, lightgray]  (e23a)--(e33a)--(e43a)--(e53a)--(e03a) ;
\draw [line width=0.05cm, lightgray]  (e06a)--(e16a)--(e26a)--(e36a)--(e46a)--(e56a)--(e06a) ;
\draw [line width=0.05cm, lightgray]  (e52a)--(e02a)--(e12a)--(e22a)--(e32a)--(e42a) ; 

\draw [line width=0.05cm, lightgray] (e33a)-- +(-0.5,0) ; \draw [line width=0.05cm, lightgray] (e36a)-- +(-0.5,0) ; \draw [line width=0.05cm, lightgray] (e32a)-- +(-0.5,0) ;
\draw [line width=0.05cm, lightgray] (e03)-- +(0.5,0) ; \draw [line width=0.05cm, lightgray] (e06)-- +(0.5,0) ; 

\path (e22) ++(-1, 0) coordinate (V1b); \node (n1b) at (V1b) {{\LARGE$1$}};
\path (e22a) ++(-1, 0) coordinate (V1c); \node (n1c) at (V1c) {{\LARGE$1$}};
\path (e26a) ++(-1, 0) coordinate (V5a); \node (n5a) at (V5a) {{\LARGE$5$}};

\filldraw [ultra thick, draw=lightgray, fill=lightgray] (e05) circle [radius=0.18]; \filldraw [ultra thick, draw=lightgray, fill=lightgray] (e25) circle [radius=0.18]; 
\filldraw [ultra thick, draw=lightgray, fill=lightgray] (e45) circle [radius=0.18];
\filldraw [ultra thick, draw=lightgray, fill=white] (e15) circle [radius=0.18]; \filldraw [ultra thick, draw=lightgray, fill=white] (e35) circle [radius=0.18]; 
\filldraw [ultra thick, draw=lightgray, fill=white] (e55) circle [radius=0.18]; 
\filldraw [ultra thick, draw=lightgray, fill=lightgray] (e03) circle [radius=0.18]; \filldraw [ultra thick, draw=lightgray, fill=lightgray] (e23) circle [radius=0.18]; 
\filldraw [ultra thick, draw=lightgray, fill=white] (e13) circle [radius=0.18]; \filldraw [ultra thick, draw=lightgray, fill=white] (e53) circle [radius=0.18]; 
\filldraw [ultra thick, draw=lightgray, fill=lightgray] (e24) circle [radius=0.18]; \filldraw [ultra thick, draw=lightgray, fill=lightgray] (e44) circle [radius=0.18];
\filldraw [ultra thick, draw=lightgray, fill=white] (e14) circle [radius=0.18]; \filldraw [ultra thick, draw=lightgray, fill=white] (e34) circle [radius=0.18]; 
\filldraw [ultra thick, draw=lightgray, fill=lightgray] (e40) circle [radius=0.18];
\filldraw [ultra thick, draw=lightgray, fill=white] (e30) circle [radius=0.18]; \filldraw [ultra thick, draw=lightgray, fill=white] (e50) circle [radius=0.18]; 
\filldraw [ultra thick, draw=lightgray, fill=lightgray] (e06) circle [radius=0.18]; \filldraw [ultra thick, draw=lightgray, fill=lightgray] (e46) circle [radius=0.18]; 
\filldraw [ultra thick, draw=lightgray, fill=white] (e56) circle [radius=0.18]; 
\filldraw [ultra thick, draw=lightgray, fill=lightgray] (e26a) circle [radius=0.18]; \filldraw [ultra thick, draw=lightgray, fill=lightgray] (e46a) circle [radius=0.18]; 
\filldraw [ultra thick, draw=lightgray, fill=white] (e36a) circle [radius=0.18]; 
\filldraw [ultra thick, draw=lightgray, fill=white] (e33a) circle [radius=0.18]; \filldraw [ultra thick, draw=lightgray, fill=white] (e32a) circle [radius=0.18]; 

\draw[->, line width=0.07cm] (n0)--(n1); \draw[->, line width=0.07cm] (n0)--(n4); \draw[->, line width=0.07cm] (n0)--(n2a);  
\draw[->, line width=0.07cm] (n5)--(n0); \draw[->, line width=0.07cm] (n5)--(n2); \draw[->, line width=0.07cm] (n5)--(n6);
\draw[->, line width=0.07cm] (n6a)--(n0); \draw[->, line width=0.07cm] (n6a)--(n3a); \draw[->, line width=0.07cm] (n6a)--(n1c);
\draw[->, line width=0.07cm] (n4)--(n5); \draw[->, line width=0.07cm] (n4)--(n6a); \draw[->, line width=0.07cm] (n4)--(n1b); 
\draw[->, line width=0.07cm] (n2)--(n3); \draw[->, line width=0.07cm] (n2)--(n4); \draw[->, line width=0.07cm] (n1)--(n5); 
\draw[->, line width=0.07cm] (n3)--(n5); \draw[->, line width=0.07cm] (n3)--(n4a); 
\draw[->, line width=0.07cm] (n6)--(n1); \draw[->, line width=0.07cm] (n6)--(n3); \draw[->, line width=0.07cm] (n6)--(n0a);
\draw[->, line width=0.07cm] (n1b)--(n2); \draw[->, line width=0.07cm] (n1b)--(n3a); \draw[->, line width=0.07cm] (n4a)--(n6); 
\draw[->, line width=0.07cm] (n3a)--(n4); \draw[->, line width=0.07cm] (n3a)--(n5a); \draw[->, line width=0.07cm] (n1c)--(n2a); 
\draw[->, line width=0.07cm] (n1c)--(n5a); \draw[->, line width=0.07cm] (n5a)--(n6a); \draw[->, line width=0.07cm] (n2a)--(n6a); 
\draw[->, line width=0.07cm] (n4a)--(n1a); \draw[->, line width=0.07cm] (n1a)--(n3); \draw[->, line width=0.07cm] (n0a)--(n4a); 

\path (e12) ++(1,0) coordinate (V0b); \node (n0b) at (V0b) {{\LARGE$0$}};
\draw[->, line width=0.07cm] (n3)--(n0b); \draw[->, line width=0.07cm] (n0b)--(n2); \draw[->, line width=0.07cm] (n0b)--(n1a);
\draw[<-, line width=0.07cm] (n6)--++(0,-1.2); \draw[<-, line width=0.07cm] (n0)--++(0,-1.2); \draw[->, line width=0.07cm] (n2)--++(0,1.2); 
\draw[->, line width=0.07cm] (n3a)--++(0,1.2); \draw[<-, line width=0.07cm] (n3a)--++(150:1.5cm);

\draw[line width=0.08cm, red]  (V1)--(V1a)--(V1b)--(V1c)--(V1) ; 
\end{tikzpicture}
} } ;  

\end{tikzpicture}
\end{center}
\end{example}

\begin{example}
\label{ex_semisteady} 
Next, we consider the square dimer model given in Figure~\ref{ex_quiver4a}. 
For simplicity, we denote the complete Jacobian algebra associated with this dimer model by $\sfA$. 
Then, the center of $\sfA$ is the $3$-dimensional Gorenstein toric singularity $R=k[[\sigma^\vee\cap\ZZ^3]]$ defined by the cone $\sigma$:
\[
\sigma=\mathrm{Cone}\{v_1=(1,0,1), v_2=(0,1,1), v_3=(-1,0,1), v_4=(0,-1,1) \}. 
\]
For this singularity, we have that $\Cl(R)\cong\ZZ\times\ZZ/2\ZZ$, 
and hence each divisorial ideal is represented by $T(a,b,0,0)$ where $a\in\ZZ, b\in\ZZ/2\ZZ$. 
By Theorem~\ref{main_thm}, $\sfA$ is a semi-steady NCCR of $R$ (that is not steady). 
More precisely, we have that 
\begin{center}
\begin{tabular}{ll}
$e_i\sfA\cong R\oplus T(0,1,0,0)\oplus T(1,1,0,0)\oplus T(-1,0,0,0),$ \\ 
$e_j\sfA\cong R\oplus T(1,0,0,0)\oplus T(1,1,0,0)\oplus T(2,1,0,0), $
\end{tabular}
\end{center}
for $i=0,2$ and $j=1,3$ (see \cite[subsection~5.2]{Nak}). 
Further, we have that $(e_i\sfA)^*\cong e_j\sfA$, and these give semi-steady NCCRs of $R$ that are not steady. 
However, there exists another consistent dimer model associated with $R$ written below, 
and this is not homotopy equivalent to a regular dimer model. Thus, this does not give semi-steady NCCRs. 
A similar example is also found in \cite[subsection~5.11]{Nak}. 

\medskip


\begin{center}
{\scalebox{0.3}{
\begin{tikzpicture}
\coordinate (B1) at (1,-1); \coordinate (B2) at (2,2); \coordinate (B3) at (5,1); \coordinate (B4) at (4,-2); 
\coordinate (B5) at (-1,1); \coordinate (B6) at (-4,2); \coordinate (B7) at (-5,-1); \coordinate (B8) at (-2,-2); 
\coordinate (B9) at (-5,5); \coordinate (B10) at (-2,4); \coordinate (B11) at (1,5); \coordinate (B12) at (4,4); 
\coordinate (B13) at (-4,-4); \coordinate (B14) at (-1,-5); \coordinate (B15) at (2,-4); \coordinate (B16) at (5,-5); 

\coordinate (W1) at (1,1); \coordinate (W2) at (4,2); \coordinate (W3) at (5,-1); \coordinate (W4) at (2,-2); 
\coordinate (W5) at (-2,2); \coordinate (W6) at (-5,1); \coordinate (W7) at (-4,-2); \coordinate (W8) at (-1,-1); 
\coordinate (W9) at (-4,4); \coordinate (W10) at (-1,5); \coordinate (W11) at (2,4); \coordinate (W12) at (5,5); 
\coordinate (W13) at (-5,-5); \coordinate (W14) at (-2,-4); \coordinate (W15) at (1,-5); \coordinate (W16) at (4,-4); 

\draw[line width=0.09cm]  (B1)--(W1)--(B5)--(W8)--(B1); 
\draw[line width=0.09cm]  (W1)--(B2)--(W2)--(B3)--(W3)--(B4)--(W4)--(B1)--(W1);
\draw[line width=0.09cm]  (W8)--(B5)--(W5)--(B6)--(W6)--(B7)--(W7)--(B8)--(W8);
\draw[line width=0.09cm]  (B9)--(W9)--(B10)--(W10)--(B11)--(W11)--(B12)--(W12);
\draw[line width=0.09cm]  (W13)--(B13)--(W14)--(B14)--(W15)--(B15)--(W16)--(B16); 
\draw[line width=0.09cm]  (B6)--(W9)--(B10)--(W5)--(B6) ; \draw[line width=0.09cm]  (B2)--(W11)--(B12)--(W2)--(B2) ;
\draw[line width=0.09cm]  (B8)--(W7)--(B13)--(W14)--(B8) ; \draw[line width=0.09cm]  (B4)--(W4)--(B15)--(W16)--(B4) ;

\draw[line width=0.09cm]  (B3)--++(1,0); \draw[line width=0.09cm]  (W3)--++(1,0); \draw[line width=0.09cm]  (W12)--++(1,0); \draw[line width=0.09cm]  (B16)--++(1,0); 
\draw[line width=0.09cm]  (W6)--++(-1,0); \draw[line width=0.09cm]  (B7)--++(-1,0); \draw[line width=0.09cm]  (B9)--++(-1,0); \draw[line width=0.09cm]  (W13)--++(-1,0); 

\filldraw  [ultra thick, fill=black] (B1) circle [radius=0.3] ; \filldraw  [ultra thick, fill=black] (B2) circle [radius=0.3] ;
\filldraw  [ultra thick, fill=black] (B3) circle [radius=0.3] ; \filldraw  [ultra thick, fill=black] (B4) circle [radius=0.3] ;
\filldraw  [ultra thick, fill=black] (B5) circle [radius=0.3] ; \filldraw  [ultra thick, fill=black] (B6) circle [radius=0.3] ;
\filldraw  [ultra thick, fill=black] (B7) circle [radius=0.3] ; \filldraw  [ultra thick, fill=black] (B8) circle [radius=0.3] ;
\filldraw  [ultra thick, fill=black] (B9) circle [radius=0.3] ; \filldraw  [ultra thick, fill=black] (B10) circle [radius=0.3] ;
\filldraw  [ultra thick, fill=black] (B11) circle [radius=0.3] ; \filldraw  [ultra thick, fill=black] (B12) circle [radius=0.3] ;
\filldraw  [ultra thick, fill=black] (B13) circle [radius=0.3] ; \filldraw  [ultra thick, fill=black] (B14) circle [radius=0.3] ;
\filldraw  [ultra thick, fill=black] (B15) circle [radius=0.3] ; \filldraw  [ultra thick, fill=black] (B16) circle [radius=0.3] ;

\filldraw  [line width=0.1cm, fill=white] (W1) circle [radius=0.3] ; \filldraw  [line width=0.1cm, fill=white] (W2) circle [radius=0.3] ;
\filldraw  [line width=0.1cm, fill=white] (W3) circle [radius=0.3] ; \filldraw  [line width=0.1cm, fill=white] (W4) circle [radius=0.3] ;
\filldraw  [line width=0.1cm, fill=white] (W5) circle [radius=0.3] ; \filldraw  [line width=0.1cm, fill=white] (W6) circle [radius=0.3] ;
\filldraw  [line width=0.1cm, fill=white] (W7) circle [radius=0.3] ; \filldraw  [line width=0.1cm, fill=white] (W8) circle [radius=0.3] ;
\filldraw  [line width=0.1cm, fill=white] (W9) circle [radius=0.3] ; \filldraw  [line width=0.1cm, fill=white] (W10) circle [radius=0.3] ;
\filldraw  [line width=0.1cm, fill=white] (W11) circle [radius=0.3] ; \filldraw  [line width=0.1cm, fill=white] (W12) circle [radius=0.3] ;
\filldraw  [line width=0.1cm, fill=white] (W13) circle [radius=0.3] ; \filldraw  [line width=0.1cm, fill=white] (W14) circle [radius=0.3] ;
\filldraw  [line width=0.1cm, fill=white] (W15) circle [radius=0.3] ; \filldraw  [line width=0.1cm, fill=white] (W16) circle [radius=0.3] ;

\draw[line width=0.15cm, red] (3,3)--(-3,3)--(-3,-3)--(3,-3)--(3,3) ; 
\end{tikzpicture} }}
\end{center}

\end{example}

\begin{example} 
\label{ex3}
If $M$ is a semi-steady module, we have that $\add_R\End_R(M)=\add_R(M\oplus M^*)$ (see Lemma~\ref{basic_lem}(a)), 
but the converse is not true as follows. 

Let $R$ be the $3$-dimensional complete local Gorenstein toric singularity defined by the cone $\sigma$: 
\[
\sigma=\mathrm{Cone}\{v_1=(0,1,1), v_2=(-1,0,1), v_3=(0,-1,1), v_4=(1,-1,1) \}. 
\]
In this situation, we have that $\Cl(R)\cong\ZZ$, and each divisorial ideal is represented by $T(a,0,0,0)$ where $a\in\ZZ$. 
By the results in \cite[subsection~5.3]{Nak}, we see that 
$M=R\oplus T(1,0,0,0)\oplus T(2,0,0,0)\oplus T(3,0,0,0)$ gives an NCCR of $R$. 
Furthermore, we have that $\add_R\End_R(M)=\add_R(M\oplus M^*)$, but we can check $M$ is not semi-steady. 
\end{example}

\subsection*{Acknowledgements}
The author is supported by World Premier International Research Center Initiative (WPI initiative), MEXT, Japan, 
and JSPS Grant-in-Aid for Young Scientists (B) 17K14159. 

The author would like to thank the anonymous referees for valuable comments on this paper. 


\end{document}